\documentclass[a4paper,11pt]{article}
\usepackage[T1]{fontenc}
\usepackage{babel}

\usepackage{hyperref}

\usepackage{float}

\usepackage{amssymb,amscd}
\usepackage[centertags]{amsmath}
\usepackage{amsfonts}
\usepackage{enumerate}
\usepackage{latexsym}
\usepackage{amsthm}
\usepackage{amssymb}
\usepackage{multicol}
\usepackage[table]{xcolor}

\usepackage{mathtools}
\usepackage{cancel}

\usepackage{tikz}
\usetikzlibrary{fit}
\usetikzlibrary{shapes.geometric}
\usetikzlibrary{graphs}
\usetikzlibrary{positioning}
\usetikzlibrary{calc}
\usepackage{ifthen}
\usepackage[normalem]{ulem}

\usepackage{comment}

\usepackage{hyperref}

\usepackage[numbers,sort]{natbib} 

\usepackage{soul}

\usepackage{authblk}

\usepackage{amssymb}
\usepackage{amsthm}
\usepackage{amsmath}
\usepackage{color}
\usepackage{tikz}
\usepackage{hyperref}

    \newcommand{\Kneser}[2]{K(#1, #2)}
    \newcommand{\Johnson}[2]{J(#1, #2)}
    
    \newcommand{\dom}[0]{\gamma}
    \newcommand{\tdom}[0]{\gamma_t}
    \newcommand{\N}[0]{\mathbb{N}}

    \newtheorem{theorem}{Theorem}
    \newtheorem{coro}[theorem]{Corollary}
    \newtheorem{lemma}[theorem]{Lemma}
    \newtheorem{prop}[theorem]{Proposition}
    \newtheorem{remark}[theorem]{Remark}
    \newtheorem{problem}[theorem]{Problem}
    \newtheorem{ex}[theorem]{Example}

\begin{document}

\title{Domination and Total Domination in Johnson graphs\footnote{MGC and PT were partially supported by PIP CONICET 1900, PICT-2020-03032, PPCT-UNR 80020250400056UR and PID 80020210300068UR. TD acknowledge the financial support of the Slovenian Research and Innovation Agency (research core funding No.\ P1-0297 and project N1-0431).}}

\author[1,2]{Maria Gracia Cornet}
\author[3,4]{Tanja Dravec}
\author[1,2]{Pablo Torres}
\date{}

\affil[1]{\footnotesize Depto. de Matem\'atica, Facultad de Ciencias Exactas, Ing. y Agrimensura, Universidad Nacional de Rosario}
\affil[2]{\footnotesize Consejo Nacional de Investigaciones Científicas y Técnicas, Argentina}
\affil[3]{\footnotesize University of Maribor, Slovenia}
\affil[4]{\footnotesize Institute of Mathematics, Physics and Mechanics, Ljubljana, Slovenia}

\maketitle

\begin{abstract}
    In this paper we study domination and total domination of Johnson graphs $\Johnson{n}{k}$. 
    We establish monotonicity results proving that both $\dom(\Johnson{n}{k})$ and $\tdom(\Johnson{n}{k})$ are non-decreasing with respect to $n$.
    We compute exact values for $\tdom(\Johnson{n}{k})$ in specific cases, showing that $\tdom(\Johnson{n}{2})=\lceil\frac{2}{3}(n-1)\rceil$ for $n\geq 4$, and we determine $\tdom(\Johnson{n}{3})$ for $n\geq 6$, which depends quadratically on $n$.

    \noindent\textbf{Keywords:} domination, total domination, Johnson graphs
\end{abstract}

\section{Introduction}
\label{sec:introduction}

Given a graph $G$, $D\subseteq V(G)$ is a \textit{dominating set} of $G$ if for every $v\in V(G)\setminus D$ there exists $u \in D$ such that $uv \in E(G)$. The \textit{domination number} of $G$, denoted by $\dom(G)$, is the least cardinality among all the dominating sets of $G$. A \textit{$\dom$-set} of $G$ is a dominating set of $G$ with cardinality $\dom(G)$.
Domination is one of the classical concepts in graph theory. It was introduced over 60 years ago and has undergone extensive development. The topic was comprehensively surveyed in recent monographs by Haynes, Hedetniemi, and Henning~\cite{HHH-2021,HHH-2023}. For graph theory notation and terminology, we refer to~\cite{HHH-2023}.

Total domination, introduced by Cockayne, Dawes and Hedetniemi
\cite{CDH1980}, strengthens classical domination by requiring that \emph{every} vertex of the graph --- including those in the dominating set itself --- have a neighbor in the set. Formally, a set $D\subseteq V(G)$ is a \emph{total dominating set} of a graph $G$ with no isolated vertices if $N(S)=V(G)$, and the \emph{total domination number}, $\tdom(G)$, is the minimum cardinality of such a set. The parameter has been studied extensively since its introduction; we refer to the monograph of Henning and Yeo \cite{henningyeo-13} and to Henning's earlier survey \cite{Henning2009}. Both domination and total domination are NP-hard to compute, and their corresponding decision problems are NP-complete~\cite{garey1990guide,PfaffLaskarHedetniemi1983}, which motivates the search for closed-form or tight asymptotic values of these parameters on structured graph families.


Among such families, distance-regular graphs arising from set-theoretic constructions have received particular attention. 
For $k,n\in\N$ with $k\leq n$, let $[k..n]$ and $[n]$ denote the sets $\{k,\ldots,n\}$ and $\{1,\ldots,n\}$, respectively. Given a set $S$, we denote by $\binom{S}{k}$ the collection of all the subsets of $S$ with cardinality $k$.
For $n>k\ge1$, the \emph{Johnson graph} $\Johnson{n}{k}$ has vertex set $\binom{[n]}{k}$, with two $k$-subsets adjacent whenever they intersect in exactly $k-1$ elements; it is $k(n-k)$-regular of order $\binom{n}{k}$. Johnson graphs arise naturally in the study of constant-weight codes, covering and packing designs, and Erd\H{o}s--Ko--Rado type extremal set theory. Besides, they are among the most studied families of distance-regular graphs. Regarding Johnson graphs, various problems and parameters have been studied, e.g.\ stability and chromatic numbers~\cite{BSSS-2006,EB-1996}, competition numbers~\cite{KPS-2010}, automorphism group~\cite{RD-2011},
distances and diameter~\cite{AACHT-2018}, and clique covering numbers~\cite{J-2025}.

\begin{figure}[ht]
    \centering
    \begin{tikzpicture}[scale=0.8]
        \foreach \x in {0,1,...,9}{
        \draw ({90+\x*36}:1.5) to[bend right=10] ({90+(\x+1)*36}:1.5);
        \draw ({90+\x*36}:1.5) arc (54+(\x)*36:234+(\x)*36:{1.5*sin(54)});
        }
        \foreach \x in {0,1,...,4}{
        \draw ({90+\x*72}:1.5) to[bend left=10] ({90+(\x+1)*72}:1.5);
        \draw ({54+\x*72}:1.5) to[bend right=10] ({54+(\x+2)*72}:1.5);
        }

        \foreach \x in {0,1,...,9}{
        \begin{scope}[shift={({90+\x*36}:1.5)}]
            \filldraw[fill=white, thick] (0,0) circle (0.2);
            \foreach \y in {0,1,...,4}{
            \draw (0,0) -- ({90-\y*72}:0.2);
            }
        \end{scope}
        }

        \begin{scope}[shift={(90:1.5)}]
            \filldraw[fill=gray!70] (0,0) -- (90:0.2) arc (90:18:0.2) -- cycle;
            \filldraw[fill=gray!70] (0,0) -- (18:0.2) arc (18:-54:0.2) -- cycle;
        \end{scope}
        \begin{scope}[shift={(18:1.5)}]
            \filldraw[fill=gray!70] (0,0) -- (18:0.2) arc (18:-54:0.2) -- cycle;
            \filldraw[fill=gray!70] (0,0) -- (-54:0.2) arc (-54:-126:0.2) -- cycle;
        \end{scope}
        \begin{scope}[shift={(306:1.5)}]
            \filldraw[fill=gray!70] (0,0) -- (-54:0.2) arc (-54:-126:0.2) -- cycle;
            \filldraw[fill=gray!70] (0,0) -- (-126:0.2) arc (-126:-198:0.2) -- cycle;
        \end{scope}
        \begin{scope}[shift={(234:1.5)}]
            \filldraw[fill=gray!70] (0,0) -- (-126:0.2) arc (-126:-198:0.2) -- cycle;                \filldraw[fill=gray!70] (0,0) -- (-198:0.2) arc (-198:-270:0.2) -- cycle;
        \end{scope}
        \begin{scope}[shift={(162:1.5)}]
            \filldraw[fill=gray!70] (0,0) -- (90:0.2) arc (90:18:0.2) -- cycle;
            \filldraw[fill=gray!70] (0,0) -- (-198:0.2) arc (-198:-270:0.2) -- cycle;
        \end{scope}
        \begin{scope}[shift={(54:1.5)}]
            \filldraw[fill=gray!70] (0,0) -- (90:0.2) arc (90:18:0.2) -- cycle;
            \filldraw[fill=gray!70] (0,0) -- (-54:0.2) arc (-54:-126:0.2) -- cycle;
        \end{scope}
        \begin{scope}[shift={(342:1.5)}]
            \filldraw[fill=gray!70] (0,0) -- (18:0.2) arc (18:-54:0.2) -- cycle;
            \filldraw[fill=gray!70] (0,0) -- (-126:0.2) arc (-126:-198:0.2) -- cycle;
        \end{scope}
        \begin{scope}[shift={(270:1.5)}]
            \filldraw[fill=gray!70] (0,0) -- (-54:0.2) arc (-54:-126:0.2) -- cycle;
            \filldraw[fill=gray!70] (0,0) -- (-198:0.2) arc (-198:-270:0.2) -- cycle;
        \end{scope}
        \begin{scope}[shift={(198:1.5)}]
            \filldraw[fill=gray!70] (0,0) -- (90:0.2) arc (90:18:0.2) -- cycle;
            \filldraw[fill=gray!70] (0,0) -- (-126:0.2) arc (-126:-198:0.2) -- cycle;
        \end{scope}
        \begin{scope}[shift={(126:1.5)}]
            \filldraw[fill=gray!70] (0,0) -- (18:0.2) arc (18:-54:0.2) -- cycle;
            \filldraw[fill=gray!70] (0,0) -- (-198:0.2) arc (-198:-270:0.2) -- cycle;
        \end{scope}
    \end{tikzpicture}
    
    \caption{Johnson graph $\Johnson{5}{2}$}
    \label{fig: J(5,2)}
\end{figure}

By contrast, domination-type parameters of Johnson graphs remained largely open until recently. Ivan\v{c}o and Zelinka \cite{IZ-1993}, Gorodezky \cite{G-2007} and \"{O}sterg\aa rd, Shao and Xu \cite{OSX-2014} studied domination in the closely related Kneser graphs $\Kneser{n}{k}$, obtaining general bounds but no closed form outside restricted ranges of $n$.
The classical domination number of $\Johnson{n}{k}$ was addressed only very recently: in \cite{CDT-2025} we studied $\dom(\Johnson{n}{k})$ in closed form for $k=2$ and, via a bijective correspondence with edge coverings by triangles of auxiliary graphs on $[n]$ built on the classical Fort--Hedlund covering numbers \cite{FortHedlund1958}, for $k=3$ and even $n\ge6$; we conjectured the same closed form, $\dom(\Johnson{n}{3})=\phi_n$, for odd $n$. This conjecture was recently settled by Lee and Oh \cite{lee2026}, completing the determination of $\dom(\Johnson{n}{3})$ for all $n\ge6$.
\begin{theorem}[\cite{CDT-2025,lee2026}]\label{th:k=3}
    If $n\geq 6$, then $\dom(\Johnson{n}{3})=\phi_n$, where
    $$\phi_n=\begin{cases}
        \frac{n^2-2n+c_n}{12}, & \text{if $n$ is even},\\
        \frac{n^2-n+c_n}{12}, & \text{if $n$ is odd},
    \end{cases}$$
    with
    \begin{equation}\label{eq: cn}
        c_n=\begin{cases}
        0, & \text{if }n\equiv 1,2,6 \mod 12,\\
        4, & \text{if }n\equiv 4,5 \mod 12,\\
        6, & \text{if }n\equiv 3,7 \mod 12,\\
        10, & \text{if }n\equiv 11 \mod 12,\\
        12, & \text{if }n\equiv 0,8,9 \mod 12,\\
        16, & \text{if }n\equiv 10 \mod 12,
    \end{cases}
    \end{equation}
\end{theorem}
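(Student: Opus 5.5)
The plan is to translate domination in $\Johnson{n}{3}$ into a covering problem on the complete graph $K_n$, and then use the Fort--Hedlund covering numbers. Take a dominating set $D$ and view each $3$-set $T\in D$ as a triangle of $K_n$.

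The first step is the correspondence. A $3$-set $S\notin D$ is dominated exactly when some $T\in D$ satisfies $|S\cap T|=2$. In other words, some triangle $T$ of $D$ shares an edge (a $2$-subset) with $S$. We say a triangle $S$ is \emph{hit} if one of its three edges lies in a triangle of $D$.

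Let $H$ be the graph on $[n]$ whose edges are those covered by triangles of $D$. Let $\bar H$ be its complement in $K_n$. Then $D$ is dominating if and only if:
\begin{itemize}
\item $\bar H$ is triangle-free, apart from the triangles that are themselves in $D$;
\item triangles of $D$ are automatically fine, since they belong to $D$.
\end{itemize}
So $D$ is dominating precisely when every triangle of $\bar H$ belongs to $D$. Since a triangle of $D$ has all its edges in $H$, such triangles cannot lie in $\bar H$. Hence the condition reduces to: $\bar H$ is triangle-free.

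Therefore $\dom(\Johnson{n}{3})$ equals the minimum, over all triangle-free graphs $F$ on $[n]$, of the number of triangles needed to cover the edges of $K_n-F$. Using triangles that stay inside $K_n-F$ costs nothing extra.

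The second step is the upper bound, by construction. Take $F=K_{\lfloor n/2\rfloor,\lceil n/2\rceil}$, so that $K_n-F$ is the disjoint union $K_{\lfloor n/2\rfloor}\cup K_{\lceil n/2\rceil}$. Cover each clique optimally by triangles, using the Fort--Hedlund value $C(m)=\lceil \frac m3\lceil\frac{m-1}2\rceil\rceil$ (with the known small exceptions). Then check, residue by residue modulo $12$, that $C(\lfloor n/2\rfloor)+C(\lceil n/2\rceil)=\phi_n$. This check is what produces the constants $c_n$.

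The third step is the lower bound, which I expect to be the main obstacle. Let $F$ be triangle-free with $|E(F)|=f\le n^2/4$ (Mantel). A triangle covering of $G=K_n-F$ needs at least:
\begin{itemize}
\item $\frac{1}{3}\left(\binom n2-f\right)$ triangles, by counting edges;
\item $\sum_v \lceil \deg_G(v)/2\rceil/3$ triangles, by counting at vertices, which is sharper because vertices of odd degree force extra edge coverage.
\end{itemize}
A key point is that each vertex $v$ has $\deg_G(v)\ge n-1-\alpha$, where $\alpha\le n/2$ comes from the triangle-free structure, since $N_F(v)$ is independent in $F$.

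I would write $\deg_F(v)=d_v$ and bound the cover size below by
$$\frac13\sum_v\Big\lceil\frac{n-1-d_v}{2}\Big\rceil.$$
Minimizing this subject to $F$ being triangle-free forces $F$ to be near-complete-bipartite and balanced. Then I would do the parity and divisibility bookkeeping in each residue class, again mirroring Fort--Hedlund.

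The delicate part is the odd $n$ case, where the extremal $F$ is unbalanced and the rounding terms interact. This is exactly the case settled only later by Lee and Oh. Here I would first attempt a stability argument: show that if $F$ is far from the balanced complete bipartite graph, the edge count alone already exceeds $\phi_n$. The near-extremal cases would then be handled by the exact parity counts.
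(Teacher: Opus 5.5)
Your first step is correct and matches the translation used in the cited proof. $D$ is dominating iff $\overline{H}$ is triangle-free, i.e.\ $D$ is a triangle edge-covering of a graph $G$ on $[n]$ with $\alpha(G)\le 2$.

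The construction in your second step, however, does not attain $\phi_n$. Write $C(v)=\lceil\frac v3\lceil\frac{v-1}2\rceil\rceil$. The balanced split gives $C(4)+C(4)=6>5=\phi_8$, $C(6)+C(6)=12>11=\phi_{12}$ and $C(8)+C(8)=22>19=\phi_{16}$. In general it overshoots by roughly $n/6$ whenever $n\equiv 0\pmod 4$. The reason is that every vertex of an even clique has odd degree: $C(v)=\lceil v^2/6\rceil$ for even $v$ versus $\lceil v(v-1)/6\rceil$ for odd $v$. For such $n$ you must take $K_{n/2-1}\cup K_{n/2+1}$; for $n=8$ this is $C(3)+C(5)=5$, the set in Figure~\ref{fig:domset-J(8,3)}. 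The quantity to evaluate mod $12$ is therefore $\min_m[C(m)+C(n-m)]$. The balanced split does work for $n\not\equiv 0\pmod 4$, in particular for every odd $n$. So your remark that the odd case has an unbalanced extremal $F$ is backwards.

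The lower bound has a genuine gap. The global count $\lceil\frac13\sum_v\lceil\deg_G(v)/2\rceil\rceil$ gives the right main term, but no parity bookkeeping can make it reach $\phi_n$:
for $n=10$ and $F=K_{5,5}$ it yields $\lceil 20/3\rceil=7<8=\phi_{10}$. For every $n\equiv 10\pmod{12}$, $F=K_{n/2,n/2}$ yields exactly $\phi_n-1$. Similarly $F=K_{10,11}$ gives $35<36=\phi_{21}$. The missing unit comes from rounding each clique separately in $C(a)+C(b)$, which a single sum over all vertices cannot see. In your near-extremal regime ($F$ complete bipartite minus a few edges), $G$ is connected, so there is no per-component rounding to appeal to.

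What is needed is a structural reduction:
\begin{enumerate}
\item For a minimum $D$ and $G=G^D$, you have $|E(G)|\le 3|D|\le 3\phi_n$.
\item If the triangle-free graph $\overline G$ were non-bipartite, Lemma~\ref{lem:edges-trianglefree} would give $|E(G)|\ge (n^2-5)/4>3\phi_n$ for $n\ge 12$. Hence $V(G)=A\cup B$ with $A,B$ cliques.
\item Each triangle of $D$ meeting both sides covers exactly one edge inside $A$ or inside $B$. So it can be replaced by a triangle inside that clique without increasing $|D|$ (after moving one vertex across if a side has only two vertices).
\item This gives $|D|\ge C(|A|)+C(|B|)\ge\min_m[C(m)+C(n-m)]$.
\end{enumerate}
Together with the corrected construction, the mod-$12$ evaluation of this minimum, and a direct check of $n\le 11$, this settles the theorem. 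It is the same scheme this paper carries out for $\gamma_t$ in Lemma~\ref{lema: dom total vert-disj} and Theorem~\ref{teo: tdom J(n,3)}, with $3$ in place of $5/2$.

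Separately, your claim $\deg_F(v)\le n/2$ is false, e.g.\ for a star. Triangle-freeness only makes $N_F(v)$ a clique of $G$, and only $\sum_v\deg_F(v)\le n^2/2$ is available.
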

As a corollary, the strict monotonicity $\dom(\Johnson{n}{k})<\dom(\Johnson{n+1}{k})$ that we had conjectured in \cite{CDT-2025} is confirmed for $k=3$. For $k\ge4$, determining $\dom(\Johnson{n}{k})$ remains open. 
A well-known upper bound on the domination number of a graph $G$ is its independence number $\alpha(G)$, which in the case of Johnson graph $\Johnson{n}{k}$, is at most $\binom{n}{k}/(n-k+1)$ (see~\cite{GM-2015}). Besides, recall that for any graph $G$ of order $n$ and maximum degree $\Delta(G)$, it holds that $\dom(G)\geq\frac{n}{\Delta(G)+1}$ (see, e.g. \cite{HHH-2023}). Therefore, it holds
\begin{equation}
\label{eq: bounds-J(n,k)}
     \frac{\binom{n}{k}}{k(n-k)+1}\leq \dom(\Johnson{n}{k})\leq \frac{\binom{n}{k}}{n-k+1}.
\end{equation}
Related variants of domination have also been studied on these families, such as Roman domination by Zec \cite{Z-2023}.

Independently of the graph-theoretic viewpoint, the classical domination number of $\Johnson{n}{k}$ admits an equivalent formulation in the language of \emph{lotto designs}. An $LD(n,k,p,t;b)$ lotto design is a collection of $b$ $k$-subsets (\emph{blocks}) of an $n$-set such that every $p$-subset intersects some block in at least $t$ points. We denote by $L(n,k,p,t)$ the minimum possible number of blocks in such a design. A dominating set of $\Johnson{n}{k}$ is precisely a collection of blocks such that every $k$-subset intersects at least one of them in at lest $k-1$ points. Thus, the domination number corresponds to the case $p=k$, $t=k-1$, yielding
\begin{equation}
    \label{eq:dom-lotto}
    \dom(\Johnson{n}{k}) = L(n,k,k,k-1).
\end{equation}
This connection, which we noticed after \cite{CDT-2025} was published, places $\dom(\Johnson{n}{3})$ in a much older line of work: a closed formula for $L(n,3,3,2)$, matching $\phi_n$, was obtained by Bate \cite{Bate1978} almost five decades ago. The case $k=2$ is likewise a Lotto design triviality: $L(n,2,2,1)=\lceil n/2\rceil$. For $k\ge4$, $L(n,k,k,k-1)$ does not appear in the Lotto design tables of \cite{LiVanRees2002,BateVanRees1998} beyond small sporadic cases, so \eqref{eq:dom-lotto} does not close the gap left open by \cite{CDT-2025,lee2026}, but it does mean that any progress on $\dom(\Johnson{n}{k})$ for $k\ge4$ translates directly into new Lotto design values, and vice versa.

Despite this progress, \emph{total} domination has received no dedicated treatment on Johnson graphs. The only bounds available prior to this paper are generic ones inherited from the theory of total domination in regular graphs: the trivial bound $\tdom(G)\ge n/\Delta(G)$, sharp only for disjoint unions of complete bipartite graphs; the general upper bound $\tdom(G)\le\big(\tfrac{1+\ln\delta(G)}{\delta(G)}\big)n$ of Henning and Yeo \cite{henningyeo-13}; and, for fixed small degree, the sharp constants $\Gamma_0(d)$ of \cite{ Archdeaconetal2004,ThomasseYeo2007,DorflingHenning2015}. None of these use the combinatorial structure of $\Johnson{n}{k}$, and --- since the degree $k(n-k)$ grows with $n$ for fixed $k$ --- the fixed-degree constants $\Gamma_0(d)$ are in any case only applicable to isolated small instances. A structurally better starting point comes from the standard sandwich $\dom(G)\leq\tdom(G)\leq 2\dom(G)$ (\cite{CDH1980}), which combined with $\dom(\Johnson{n}{3})=\phi_n$ gives $\phi_n\leq\tdom(\Johnson{n}{3})\le 2\phi_n$; this already improves asymptotically on the trivial bound $n/\Delta$, but leaves a multiplicative gap of essentially $2$ between the two sides.

The paper is organized as follows. 
In Section~\ref{sec: dom-Johnson} we study the domination problem in this family of graphs.
First, we prove monotonicity results with respect to $n$: $\dom(\Johnson{n}{k})$ is non-decreasing, and in fact strictly increasing every two steps, $\dom(\Johnson{n}{k})<\dom(\Johnson{n+2}{k})$, for $n\geq 2k$
(these results appeared in proceedings of LAGOS 2025 \cite{CDT-2025}).
We then develop a recursive technique for deriving upper bounds for general $k$, improving the bound (\ref{eq: bounds-J(n,k)}) for $k\in\{4,5,6\}$.

In Section~\ref{sec: tdom-Johnson} we focus on the total domination number of the graphs $\Johnson{n}{k}$, studying the cases $k=2$ and $k=3$. For both we obtain the exact value of $\tdom(\Johnson{n}{k})$: we prove that $\tdom(\Johnson{n}{2})=\left\lceil 2(n-1)/3\right\rceil$ for all $n\geq 4$, and that $\tdom(\Johnson{n}{3})=\varphi_n$ for all $n\geq 10$, with $\varphi_n$ again an explicit quadratic expression. Our approach in this last case is translating the total dominating sets into coverings by edge-overlapping triangles (CEOT), and relying on known results on edge coverings and packings by diamonds ($K_4-e$) in complete graphs.

\section{Domination in Johnson graphs}\label{sec: dom-Johnson}

Given graphs $G$ and $H$, a {\emph{homomorphism}} from $G$ to $H$ is a function $f: V(G) \to V(H)$ such that if $x,y \in V(G)$ are adjacent in $G$, then $f(x)f(y) \in E(H)$. A function $f:V(G) \to V(H)$ is a {\emph{weak homomorphism}} if adjacent vertices of $G$ are mapped either to adjacent or equal vertices of $H$. It was already observed that the image of a dominating set under a surjective homomorphism is again a dominating set~\cite{G-2007}. 
Clearly, the result holds also for weak homomorphisms.

\begin{remark}\label{r:weakHomom}
    If there exists a surjective (weak) homomorphism from a graph $G$ to a graph $H$, then $\dom(G) \geq \dom(H)$.
\end{remark}

\begin{lemma}\label{l:JohnsonHomomor}
    If $n \geq 2k$, then there exists a surjective weak homomorphism from $\Johnson{n+1}{k}$ to $\Johnson{n}{k}$.
\end{lemma}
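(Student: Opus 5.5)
The plan is to write down an explicit map $f\colon\binom{[n+1]}{k}\to\binom{[n]}{k}$ that is the identity on every $k$-subset avoiding $n+1$. Every $A$ containing $n+1$ has the form $A=B\cup\{n+1\}$ with $B\in\binom{[n]}{k-1}$, and I will send it to $B\cup\{g(B)\}$, where
$$g(B)=\min([n]\setminus B).$$
This minimum exists because $|[n]\setminus B|=n-k+1\ge 1$. Surjectivity is immediate, since $f$ fixes every vertex of $\Johnson{n}{k}$. What remains is to check that $f$ is a weak homomorphism, i.e.\ that for every edge $AA'$ of $\Johnson{n+1}{k}$ we have $|f(A)\cap f(A')|\ge k-1$.

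I will split the edges $AA'$ into three cases according to how many of $A,A'$ contain $n+1$.

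\emph{Neither contains $n+1$.} Both sets are fixed by $f$, so nothing changes and the edge is preserved.

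\emph{Exactly one contains $n+1$.} Say $A=B\cup\{n+1\}$ and $A'\subseteq[n]$. Since $|A\cap A'|=k-1$ and $n+1\notin A'$, we get $A'=B\cup\{y\}$ for some $y\in[n]\setminus B$. Then $f(A)=B\cup\{g(B)\}$ and $f(A')=B\cup\{y\}$ both contain the $(k-1)$-set $B$, so they are equal or adjacent.

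\emph{Both contain $n+1$.} This is the only case that needs the specific choice of $g$. Write $A=B\cup\{n+1\}$ and $A'=B'\cup\{n+1\}$ with $|B\cap B'|=k-2$, so $B=S\cup\{b\}$ and $B'=S\cup\{b'\}$ with $b\neq b'$ and $|S|=k-2$. The images are $S\cup\{b,g(B)\}$ and $S\cup\{b',g(B')\}$. These intersect in at least $k-1$ elements provided one of the following holds: $g(B)=g(B')$, or $g(B)=b'$, or $g(B')=b$.

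To verify this, let $m=\min([n]\setminus S)$. If $m\notin\{b,b'\}$, then $m$ is also the least element outside $B$ and outside $B'$, so $g(B)=g(B')=m$. If $m=b$, then $b\notin B'$ and nothing in $[n]\setminus S$ is smaller than $b$, so $g(B')=b$. The case $m=b'$ is symmetric and gives $g(B)=b'$.

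I expect this third case to be the only real obstacle: a careless choice of the added element could produce images at distance $2$. The "least missing element" rule is exactly what forces one of the three coincidences above. Note that the hypothesis $n\ge 2k$ is not actually needed by this construction. It only ensures we are in the range of interest, and the construction works whenever $g$ is well defined.
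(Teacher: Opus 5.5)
Your proof is correct and is essentially the paper's. The paper uses the same map: it fixes every vertex of $\Johnson{n}{k}$, which gives surjectivity, and sends $u\ni n+1$ to $(u\setminus\{n+1\})\cup\{\max([n]\setminus u)\}$. This is your rule with $\max$ in place of $\min$, and the two are equivalent under reversing the order of $[n]$. The paper only asserts that adjacent vertices are sent to equal or adjacent vertices, so your three-case check, especially the case where both sets contain $n+1$, supplies the verification it omits. Your observation that the hypothesis $n\ge 2k$ is not needed for this lemma is also right.
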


\begin{proof}
    Define $f:V(\Johnson{n+1}{k}) \to V(\Johnson{n}{k})$ in the following way. For any $u \in V(\Johnson{n+1}{k})$, let 
    \[ 
f(u)= \left\{
\begin{array}{ll}
      u, & n+1 \notin u, \\
      (u\setminus \{n+1\}) \cup \max([n]\setminus u),  & n+1 \in u.\\
\end{array} 
\right. 
\]
Function $f$ is clearly surjective since for any $x \in V(\Johnson{n}{k}) \subseteq V(\Johnson{n+1}{k})$ it holds that $f(x)=x$. Moreover, let $x,y$ be two adjacent vertices of $\Johnson{n+1}{k}$. Then $|x \cap y|=k-1$ and hence either $|f(x) \cap f(y)|=k-1$ or $f(x)=f(y)$.
\end{proof}

As a direct consequence of Lemma~\ref{l:JohnsonHomomor} and Remark~\ref{r:weakHomom}, we have the following.

\begin{theorem}\label{thm:monotonicity}
    If $n\geq 2k$, then $\dom(\Johnson{n}{k})\leq \dom(\Johnson{n+1}{k})$.
\end{theorem}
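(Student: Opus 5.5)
The plan is to combine Lemma~\ref{l:JohnsonHomomor} with Remark~\ref{r:weakHomom}. Fix $n\geq 2k$ and let $f\colon V(\Johnson{n+1}{k})\to V(\Johnson{n}{k})$ be the surjective weak homomorphism from Lemma~\ref{l:JohnsonHomomor}. Take a $\dom$-set $D$ of $\Johnson{n+1}{k}$. I would show that $f(D)$ is a dominating set of $\Johnson{n}{k}$. Since $|f(D)|\leq |D|$, this yields $\dom(\Johnson{n}{k})\leq \dom(\Johnson{n+1}{k})$.

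To make the remark self-contained, I would first verify the domination step directly. Let $x\in V(\Johnson{n}{k})$. By surjectivity there is some $u\in V(\Johnson{n+1}{k})$ with $f(u)=x$; concretely, $u=x$ works, since $f$ fixes every $k$-subset of $[n]$. Because $D$ is dominating, either $u\in D$, in which case $x=f(u)\in f(D)$, or $u$ has a neighbour $d\in D$. In the latter case the weak homomorphism property gives either $f(d)=x$ or $f(d)$ adjacent to $x$. In every case $x$ lies in $f(D)$ or is adjacent to a vertex of $f(D)$, so $f(D)$ dominates $\Johnson{n}{k}$.

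The only delicate point is confirming that $f$ is well defined and really a weak homomorphism. That is the content of Lemma~\ref{l:JohnsonHomomor}, which I may assume. For well-definedness one needs $[n]\setminus u\neq\emptyset$, which holds since $|u\cap[n]|=k-1<n$. The hypothesis $n\geq 2k$ is used only to invoke that lemma. I expect no genuine obstacle beyond this bookkeeping.
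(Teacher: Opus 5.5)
Your proposal is correct and follows the paper's route exactly: the paper obtains the theorem as a direct consequence of Lemma~\ref{l:JohnsonHomomor} (the surjective weak homomorphism $\Johnson{n+1}{k}\to\Johnson{n}{k}$) combined with Remark~\ref{r:weakHomom}. Your direct check that $f(D)$ dominates, using $u=x$ as the preimage since $f$ fixes every $k$-subset of $[n]$, is a valid verification of that remark in this setting.
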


Next, we show that the domination number of Johnson graphs is not the same for three consecutive values of $n$, i.e. $\dom(\Johnson{n}{k}) < \dom(\Johnson{n+2}{k})$. To this end, given a dominating set $D$ of $\Johnson{n}{k}$, for each $S\subseteq D$ we consider $h_D(S)=|\bigcap_{x\in S}x|$. Note that $h_D(S)$ is the number of elements in $[n]$ that belong to every vertex of $S$. Let $h_D=\max\{|S|:\ h_D(S)\geq2\}$. Observe that if $k\geq2$, $h_D\geq1$ for every dominating set $D$.

\begin{theorem}\label{thm:strict - 2steps}
    Let $k\geq2$ and $n \geq 2k$. Then, $\dom(\Johnson{n}{k})+1\leq\dom(\Johnson{n+2}{k})$. 
    Moreover, if $k\geq 5$ then $\dom(\Johnson{n}{k})+2\leq\dom(\Johnson{n+2}{k})$.
\end{theorem}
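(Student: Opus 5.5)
The plan is to go the other way from Theorem~\ref{thm:monotonicity}: start with a $\dom$-set $D$ of $\Johnson{n+2}{k}$ and build from it a dominating set of $\Johnson{n}{k}$ with at most $|D|-1$ elements, and with at most $|D|-2$ elements when $k\ge5$. The basic tool is the composition $g=f\circ f'$ of two weak homomorphisms of the type in Lemma~\ref{l:JohnsonHomomor}, from $\Johnson{n+2}{k}$ to $\Johnson{n+1}{k}$ and then to $\Johnson{n}{k}$. The surjectivity argument there only uses $n\ge 2k$, so the relabeling freedom I introduce below does not affect it. By Remark~\ref{r:weakHomom}, $g(D)$ is a dominating set of $\Johnson{n}{k}$ with $|g(D)|\le|D|$. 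The gain has to come from choosing the construction so that either two elements of $D$ collapse to one vertex, or one element of $D$ becomes superfluous in $g(D)$.

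I would exploit two sources of freedom. First, I may choose which two ground elements $a,b\in[n+2]$ are deleted; by symmetry I relabel them as $n+1,n+2$. Second, I may choose the linear order on $[n]$ used to define ``$\max([n]\setminus u)$'' in the replacement rule, and this order can be different in the two steps. I would take $S\subseteq D$ with $|S|=h_D$ and $h_D(S)\ge 2$, and choose $a,b$ in the common intersection $\bigcap_{x\in S}x$. Then every $x\in S$ contains both deleted points, and its image is $(x\setminus\{a,b\})\cup\{c_x,d_x\}$, where $c_x,d_x$ are the two largest points outside $x\setminus\{a,b\}$ in the chosen order.

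The key step is to arrange the order on $[n]$ so that for some $x\in S$ the image $g(x)$ either equals $g(y)$ for another $y\in D$, or is not needed in $g(D)$. The latter holds when $g(x)$ is already dominated by $g(D)\setminus\{g(x)\}$ and every vertex that only $g(x)$ dominated is also covered by another element. To get this, I would put at the top of the order points that lie in many members of $D$; then $g(x)$ lands on, or next to, an existing image. The maximality of $h_D$ is meant to control the vertices of $\Johnson{n+2}{k}$ that contain both $a$ and $b$ and are dominated only through $x$: any other $D$-member meeting $x$ in $\{a,b\}$ plus enough further points is already counted in $S$. This case analysis, showing that removing one image still leaves a dominating set of $\Johnson{n}{k}$, is where I expect the main obstacle.

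For the case $k\ge5$, I would repeat the argument with a second vertex. Since $k\ge 5$, a member $x\in S$ has at least three points outside $\{a,b\}$. This should leave room to choose the order so that two distinct members of $D$ (two in $S$, or one in $S$ and a neighbour) both become redundant or collapse, giving $\dom(\Johnson{n}{k})\le|D|-2$. The counting that shows these two savings do not interfere is the second delicate point, and I would attack it by choosing $x,x'\in S$ with $|x\cap x'|$ as small as possible.
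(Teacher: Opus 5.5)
There is a genuine gap. The step that produces the saving is never carried out. The mechanism you propose for it, tuning the linear orders so that some $g(x)$ collapses or becomes superfluous, is unproven, and it is also unnecessary.

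The missing observation is simple. A member $x\in D$ with $\{a,b\}\subseteq x$ satisfies $|x\cap z|\le k-2$ for every $z\in\binom{[n]}{k}$. So $x$ neither equals nor dominates any vertex of $\Johnson{n}{k}$ viewed inside $\Johnson{n+2}{k}$.

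Your $g$ fixes $\binom{[n]}{k}$ pointwise, whatever orders you pick. So the transfer argument behind Remark~\ref{r:weakHomom} only needs, for each $z\in\binom{[n]}{k}$, a dominator $y\in D$ with $y=z$ or $|y\cap z|=k-1$. Any such $y$ contains at most one of $a,b$, so it is never in $S$. Hence $g(D\setminus S)$ is already a dominating set of $\Johnson{n}{k}$, for any choice of orders, and $\dom(\Johnson{n}{k})\le |D|-|S| = |D|-h_D$.

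This is essentially the paper's set $D'=\{v' : v\in D\setminus S\}$. There, maximality of $S$ is used only to ensure that every $v\in D\setminus S$ meets $\{a,b\}$ in at most one point. Your concern about vertices of $\Johnson{n+2}{k}$ that contain both $a$ and $b$ is a red herring. They never need to be dominated, because each vertex of $\Johnson{n}{k}$ can be taken as its own preimage under $g$. Since $h_D\ge 1$ (a single member of $D$ has $k\ge 2$ points), the first claim follows.

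For $k\ge 5$, your plan presupposes two members of $S$ (``choose $x,x'\in S$''), but you give no reason why two members of $D$ share two points. The fact that $x$ has three points outside $\{a,b\}$ is irrelevant. What is needed is $h_D\ge 2$.

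Suppose all members of $D$ were pairwise at distance at least $4$ in $\Johnson{n+2}{k}$. Then a vertex at distance $2$ from some $u\in D$ would be neither in $D$ nor adjacent to any member of $D$, contradicting domination. So some $u,v\in D$ satisfy $k-|u\cap v|\le 3$, that is, $|u\cap v|\ge k-3\ge 2$. Combined with the previous paragraph, this gives the bound $|D|-2$ at once. There is no interaction between two separate savings to control.
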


\begin{proof}
Let $D$ be a $\dom$-set of $\Johnson{n+2}{k}$ and $S\subseteq D$ such that $h_D=|S|$. Note that $h_D(S)\geq 2$. Without loss of generality assume that $\{n+1,n+2\}\subseteq\bigcap_{x\in S}x$. By maximality of $S$, every vertex $v$ in $D_S=D\setminus S$ satisfies $|v\cap\{n+1,n+2\}|\leq1$.

For any $v \in D_S$  define
    \[ 
v'= \left\{
\begin{array}{ll}
      v, &  \text{if }v \cap \{n+1,n+2\}=\emptyset, \\
      (v\setminus \{n+1\}) \cup \max([n]\setminus v),  & \text{if }n+1 \in v,\\
      (v\setminus \{n+2\}) \cup \max([n]\setminus v),  & \text{if }n+2 \in v.\\
\end{array} 
\right. 
\]
Then $D'=\{v': v \in D_S\}$ is a dominating set of $\Johnson{n}{k}$. Indeed, let $x \in V(\Johnson{n}{k})\setminus D'$. Then $x \in V(\Johnson{n+2}{k})$ and hence there exists $y \in D$ with $|x \cap y|=k-1$. Since $n+1,n+2 \notin x$, it follows that $\{n+1,n+2\} \nsubseteq y$ and $|x \cap y'|=k-1$ (note that $|x \cap y'|=k$ is not possible, since $x \notin D'$). Therefore, $xy' \in E(\Johnson{n}{k})$ and thus $D'$ is a dominating set of $\Johnson{n}{k}$.

Hence, $\dom(\Johnson{n}{k})\leq |D'|=|D|-|S|=\dom(\Johnson{n+2}{k})-h_D$. As $h_D\geq 1$ for every $n\geq 2k$, it follows $\dom(\Johnson{n}{k})+1\leq \dom(\Johnson{n+2}{k})$.

Finally, assume $k\geq 5$. Notice that there exist two vertices in $D$ which are at distance at most $3$. Otherwise, let $u\in D$, and $w$ a vertex in $\Johnson{n+2}{k}$ which is at distance $2$ from $u$. Since the distance between $u$ and any other vertex of $D$ is at least $4$, $w\notin D$ and has no neighbor in $D$, contradicting the fact that $D$ is a dominating set. Therefore, there exist $u,v\in D$ such that $\mathrm{dist}(u,v)=k-|u\cap v|\leq 3$, this is, $|u\cap v|\geq k-3 \geq 2$. It follows $h_D\geq 2$ and as a consequence $\dom(\Johnson{n}{k})+2\leq \dom(\Johnson{n+2}{k})$.
\end{proof}

\begin{remark}
    If $k=4$ and $\Johnson{n+2}{k}$ does not admit a $1$-perfect code, this is, a set of vertices whose closed neighborhoods form a partition of $V(\Johnson{n+2}{k})$, then every dominating set $D$ contains at least two vertices at distance at most $2$. Using an argument similar to the one at the end of the proof of Theorem \ref{thm:strict - 2steps}, we obtain $h_D\geq 2$, and hence $\dom(\Johnson{n}{k})+2\leq\dom(\Johnson{n+2}{k})$.
\end{remark}

\subsection{Construction of dominating sets for \texorpdfstring{$\Johnson{n}{k}$}{J(n,k)}}\label{sec:bounds}

In this subsection we give a recursive construction of dominating sets that we will use to give upper bounds for $\dom(\Johnson{n}{k})$. The construction we present in Remark \ref{rem:recursive-bound} is inspired by the recursive construction proposed by Jorgensen for the study of the clique covering number of Johnson graphs~\cite{J-2025}.

Consider $D_1$ and $D_2$ dominating sets of $\Johnson{n-1}{k}$ and $\Johnson{n-1}{k-1}$, respectively. Define $D=D_1\cup \{w\cup\{n\}:w\in D_2\}$ and let $u$ be a vertex of $\Johnson{n}{k}$.
If $n\in u$, let $\hat{u}=u\setminus \{n\}$. Since $D_2$ is a dominating set of $\Johnson{n-1}{k-1}$, there exists $w\in D_2$ such that $|w\cap \hat{u}|\geq k-2$. Then $w\cup\{n\}\in D$ and $|(w\cup\{n\})\cap u|\geq k-1$.
Otherwise, $u\subseteq [n-1]$. Since $D_1$ is a dominating set of $\Johnson{n-1}{k}$, there exists $v\in D_1\subseteq D$ such that $|v\cap u|\geq k-1$.
Therefore, $D$ is a dominating set of $\Johnson{n}{k}$.

\begin{remark}\label{rem:recursive-bound}
If $D_1$ and $D_2$ are dominating sets of $\Johnson{n-1}{k}$ and $\Johnson{n-1}{k-1}$ respectively, then the set
$$D=D_1\cup \{w\cup\{n\}:w\in D_2\}$$
is a dominating set of $\Johnson{n}{k}$ and
$$\dom(\Johnson{n}{k})\leq\dom(\Johnson{n-1}{k-1})+\dom(\Johnson{n-1}{k}).$$
\end{remark}


\begin{ex}
    The set $D_1$ represented in Figure $\ref{fig:domset-J(8,4)}$ is a dominating set of the graph $\Johnson{8}{4}$. On the other hand, the set $D_2$ represented in Figure $\ref{fig:domset-J(8,3)}$ is a (minimum) dominating set of $\Johnson{8}{3}$.
    \begin{figure}[ht]
        \centering
        \begin{minipage}[b]{0.48\linewidth}
            \centering
            \begin{tabular}{|c|c|c|c|c|c|c|c|}
                \hline
                 1 & 2 & 3 & 4 & 5 & 6 & 7 & 8 \\ \hline
                \cellcolor[HTML]{DAE8FC}x & \cellcolor[HTML]{DAE8FC} & \cellcolor[HTML]{DAE8FC}x & \cellcolor[HTML]{DAE8FC}x & \cellcolor[HTML]{DAE8FC}x & \cellcolor[HTML]{DAE8FC} & \cellcolor[HTML]{DAE8FC} & \cellcolor[HTML]{DAE8FC}\\ \hline
                \cellcolor[HTML]{DAE8FC}x & \cellcolor[HTML]{DAE8FC} & \cellcolor[HTML]{DAE8FC}x & \cellcolor[HTML]{DAE8FC}x & \cellcolor[HTML]{DAE8FC} & \cellcolor[HTML]{DAE8FC}x & \cellcolor[HTML]{DAE8FC} & \cellcolor[HTML]{DAE8FC}\\ \hline
                \cellcolor[HTML]{DAE8FC}x & \cellcolor[HTML]{DAE8FC} & \cellcolor[HTML]{DAE8FC} & \cellcolor[HTML]{DAE8FC} & \cellcolor[HTML]{DAE8FC}x & \cellcolor[HTML]{DAE8FC}x & \cellcolor[HTML]{DAE8FC}x & \cellcolor[HTML]{DAE8FC}\\ \hline
                \cellcolor[HTML]{DAE8FC} & \cellcolor[HTML]{DAE8FC}x & \cellcolor[HTML]{DAE8FC} & \cellcolor[HTML]{DAE8FC} & \cellcolor[HTML]{DAE8FC}x & \cellcolor[HTML]{DAE8FC}x & \cellcolor[HTML]{DAE8FC} & \cellcolor[HTML]{DAE8FC}x\\ \hline
                \cellcolor[HTML]{DAE8FC} & \cellcolor[HTML]{DAE8FC}x & \cellcolor[HTML]{DAE8FC}x & \cellcolor[HTML]{DAE8FC} & \cellcolor[HTML]{DAE8FC} & \cellcolor[HTML]{DAE8FC} & \cellcolor[HTML]{DAE8FC}x & \cellcolor[HTML]{DAE8FC}x\\ \hline
                \cellcolor[HTML]{DAE8FC} & \cellcolor[HTML]{DAE8FC}x & \cellcolor[HTML]{DAE8FC} & \cellcolor[HTML]{DAE8FC}x & \cellcolor[HTML]{DAE8FC} & \cellcolor[HTML]{DAE8FC} & \cellcolor[HTML]{DAE8FC}x & \cellcolor[HTML]{DAE8FC}x\\ \hline
            \end{tabular}
            \caption{Dominating set of $\Johnson{8}{4}$.}
            \label{fig:domset-J(8,4)}
        \end{minipage}
        \hfill 
        \begin{minipage}[b]{0.48\linewidth}
            \centering
            \begin{tabular}{|c|c|c|c|c|c|c|c|}
                \hline
                 1 & 2 & 3 & 4 & 5 & 6 & 7 & 8\\ \hline
                \cellcolor[HTML]{ACE1AF}x & \cellcolor[HTML]{ACE1AF}x & \cellcolor[HTML]{ACE1AF}x & \cellcolor[HTML]{ACE1AF} & \cellcolor[HTML]{ACE1AF} & \cellcolor[HTML]{ACE1AF} & \cellcolor[HTML]{ACE1AF} & \cellcolor[HTML]{ACE1AF}\\ \hline
                \cellcolor[HTML]{ACE1AF}x & \cellcolor[HTML]{ACE1AF}x & \cellcolor[HTML]{ACE1AF} & \cellcolor[HTML]{ACE1AF}x & \cellcolor[HTML]{ACE1AF} & \cellcolor[HTML]{ACE1AF} & \cellcolor[HTML]{ACE1AF} & \cellcolor[HTML]{ACE1AF}\\ \hline
                \cellcolor[HTML]{ACE1AF}x & \cellcolor[HTML]{ACE1AF}x & \cellcolor[HTML]{ACE1AF} & \cellcolor[HTML]{ACE1AF} & \cellcolor[HTML]{ACE1AF}x & \cellcolor[HTML]{ACE1AF} & \cellcolor[HTML]{ACE1AF} & \cellcolor[HTML]{ACE1AF}\\ \hline
                \cellcolor[HTML]{ACE1AF} & \cellcolor[HTML]{ACE1AF} & \cellcolor[HTML]{ACE1AF}x & \cellcolor[HTML]{ACE1AF}x & \cellcolor[HTML]{ACE1AF}x & \cellcolor[HTML]{ACE1AF} & \cellcolor[HTML]{ACE1AF} & \cellcolor[HTML]{ACE1AF}\\ \hline
                \cellcolor[HTML]{ACE1AF} & \cellcolor[HTML]{ACE1AF} & \cellcolor[HTML]{ACE1AF} & \cellcolor[HTML]{ACE1AF} & \cellcolor[HTML]{ACE1AF} & \cellcolor[HTML]{ACE1AF}x & \cellcolor[HTML]{ACE1AF}x & \cellcolor[HTML]{ACE1AF}x \\ \hline
            \end{tabular}
            \caption{Dominating set of $\Johnson{8}{3}$.}
            \label{fig:domset-J(8,3)}
        \end{minipage}
    \end{figure}
    From these sets, we can build a dominating set of $\Johnson{9}{4}$ given by $D=D_1\cup \{w\cup\{9\}:w\in D_2\}$, represented in Figure~\ref{fig:domset-J(9,4)-no optimo} with which
    $$\dom(\Johnson{9}{4})\leq |D|=|D_1|+|D_2|=6+5=11.$$
    However, it is worth noting that $D$ is not a minimum dominating set of $\Johnson{9}{4}$. Indeed, $\dom(\Johnson{9}{4})\leq 9$, and a dominating set of cardinality $9$ is represented in Figure~\ref{fig:domset-J(9,4)}.
    
    \begin{figure}[ht]
        \centering
        \begin{tabular}{|c|c|c|c|c|c|c|c|c|}
            \hline
             1 & 2 & 3 & 4 & 5 & 6 & 7 & 8 & 9\\ \hline
             \cellcolor[HTML]{DAE8FC}x & \cellcolor[HTML]{DAE8FC} & \cellcolor[HTML]{DAE8FC}x & \cellcolor[HTML]{DAE8FC}x & \cellcolor[HTML]{DAE8FC}x & \cellcolor[HTML]{DAE8FC} & \cellcolor[HTML]{DAE8FC} & \cellcolor[HTML]{DAE8FC} & \\ \hline
             \cellcolor[HTML]{DAE8FC}x & \cellcolor[HTML]{DAE8FC} & \cellcolor[HTML]{DAE8FC}x & \cellcolor[HTML]{DAE8FC}x & \cellcolor[HTML]{DAE8FC} & \cellcolor[HTML]{DAE8FC}x & \cellcolor[HTML]{DAE8FC} & \cellcolor[HTML]{DAE8FC} & \\ \hline
             \cellcolor[HTML]{DAE8FC}x & \cellcolor[HTML]{DAE8FC} & \cellcolor[HTML]{DAE8FC} & \cellcolor[HTML]{DAE8FC} & \cellcolor[HTML]{DAE8FC}x & \cellcolor[HTML]{DAE8FC}x & \cellcolor[HTML]{DAE8FC}x & \cellcolor[HTML]{DAE8FC}& \\ \hline
             \cellcolor[HTML]{DAE8FC} & \cellcolor[HTML]{DAE8FC}x & \cellcolor[HTML]{DAE8FC} & \cellcolor[HTML]{DAE8FC} & \cellcolor[HTML]{DAE8FC}x & \cellcolor[HTML]{DAE8FC}x & \cellcolor[HTML]{DAE8FC} & \cellcolor[HTML]{DAE8FC}x & \\ \hline
             \cellcolor[HTML]{DAE8FC} & \cellcolor[HTML]{DAE8FC}x & \cellcolor[HTML]{DAE8FC}x & \cellcolor[HTML]{DAE8FC} & \cellcolor[HTML]{DAE8FC} & \cellcolor[HTML]{DAE8FC} & \cellcolor[HTML]{DAE8FC}x & \cellcolor[HTML]{DAE8FC}x & \\ \hline
             \cellcolor[HTML]{DAE8FC} & \cellcolor[HTML]{DAE8FC}x & \cellcolor[HTML]{DAE8FC} & \cellcolor[HTML]{DAE8FC}x & \cellcolor[HTML]{DAE8FC} & \cellcolor[HTML]{DAE8FC} & \cellcolor[HTML]{DAE8FC}x & \cellcolor[HTML]{DAE8FC}x & \\ \hline
            \cellcolor[HTML]{ACE1AF}x & \cellcolor[HTML]{ACE1AF}x & \cellcolor[HTML]{ACE1AF}x & \cellcolor[HTML]{ACE1AF} & \cellcolor[HTML]{ACE1AF} & \cellcolor[HTML]{ACE1AF} & \cellcolor[HTML]{ACE1AF} & \cellcolor[HTML]{ACE1AF} & \cellcolor[HTML]{FFFFC7}x \\ \hline
            \cellcolor[HTML]{ACE1AF}x & \cellcolor[HTML]{ACE1AF}x & \cellcolor[HTML]{ACE1AF} & \cellcolor[HTML]{ACE1AF}x & \cellcolor[HTML]{ACE1AF} & \cellcolor[HTML]{ACE1AF} & \cellcolor[HTML]{ACE1AF} & \cellcolor[HTML]{ACE1AF} & \cellcolor[HTML]{FFFFC7}x \\ \hline
            \cellcolor[HTML]{ACE1AF}x & \cellcolor[HTML]{ACE1AF}x & \cellcolor[HTML]{ACE1AF} & \cellcolor[HTML]{ACE1AF} & \cellcolor[HTML]{ACE1AF}x & \cellcolor[HTML]{ACE1AF} & \cellcolor[HTML]{ACE1AF} & \cellcolor[HTML]{ACE1AF} & \cellcolor[HTML]{FFFFC7}x \\ \hline
            \cellcolor[HTML]{ACE1AF} & \cellcolor[HTML]{ACE1AF} & \cellcolor[HTML]{ACE1AF}x & \cellcolor[HTML]{ACE1AF}x & \cellcolor[HTML]{ACE1AF}x & \cellcolor[HTML]{ACE1AF} & \cellcolor[HTML]{ACE1AF} & \cellcolor[HTML]{ACE1AF} & \cellcolor[HTML]{FFFFC7}x \\ \hline
            \cellcolor[HTML]{ACE1AF} & \cellcolor[HTML]{ACE1AF} & \cellcolor[HTML]{ACE1AF} & \cellcolor[HTML]{ACE1AF} & \cellcolor[HTML]{ACE1AF} & \cellcolor[HTML]{ACE1AF}x & \cellcolor[HTML]{ACE1AF}x & \cellcolor[HTML]{ACE1AF}x & \cellcolor[HTML]{FFFFC7}x \\ \hline
        \end{tabular}
        \caption{Dominating set of $\Johnson{9}{4}$.}
        \label{fig:domset-J(9,4)-no optimo}
    \end{figure}
\end{ex}

Repeating this idea, it is possible to recursively construct a dominating set of $\Johnson{n}{k}$ and thus obtain upper bounds for $\dom(\Johnson{n}{k})$.

\begin{theorem}\label{thm: recursive-upper-bound}
For all $m\geq 2k$ and $n\geq m+1$, we have
$$\dom(\Johnson{n}{k})\leq \dom(\Johnson{m}{k})+\sum_{i=m}^{n-1}\dom(\Johnson{i}{k-1}).$$
\end{theorem}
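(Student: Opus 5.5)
We argue by induction on $n$, with $m\geq 2k$ fixed, using the one-step inequality of Remark~\ref{rem:recursive-bound},
$$\dom(\Johnson{n}{k})\leq\dom(\Johnson{n-1}{k-1})+\dom(\Johnson{n-1}{k}),$$
which is simply telescoped. The construction in Remark~\ref{rem:recursive-bound} needs no hypothesis beyond $D_1$ and $D_2$ being dominating sets of $\Johnson{n-1}{k}$ and $\Johnson{n-1}{k-1}$. These graphs are well defined as soon as $n-1>k$, which holds since $n-1\geq m\geq 2k$. For $k=1$ the graph $\Johnson{i}{0}$ is degenerate, so we assume $k\geq 2$. In that case $\Johnson{i}{k-1}$ with $i\geq m\geq 2k>k-1$ is a genuine Johnson graph.

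\emph{Base case $n=m+1$.} Applying Remark~\ref{rem:recursive-bound} with $n=m+1$ and taking $D_1$, $D_2$ to be a $\dom$-set of $\Johnson{m}{k}$ and a $\dom$-set of $\Johnson{m}{k-1}$ gives
$$\dom(\Johnson{m+1}{k})\leq\dom(\Johnson{m}{k})+\dom(\Johnson{m}{k-1}).$$
This is exactly the claimed inequality, since the sum $\sum_{i=m}^{m}$ has the single term $i=m$.

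\emph{Inductive step.} Assume the statement holds for some $n\geq m+1$. Applying Remark~\ref{rem:recursive-bound} to $\Johnson{n+1}{k}$, with $D_1$ a $\dom$-set of $\Johnson{n}{k}$ and $D_2$ a $\dom$-set of $\Johnson{n}{k-1}$, gives
$$\dom(\Johnson{n+1}{k})\leq\dom(\Johnson{n}{k-1})+\dom(\Johnson{n}{k}).$$
Substituting the induction hypothesis for $\dom(\Johnson{n}{k})$ yields
$$\dom(\Johnson{n+1}{k})\leq\dom(\Johnson{m}{k})+\sum_{i=m}^{n-1}\dom(\Johnson{i}{k-1})+\dom(\Johnson{n}{k-1}),$$
and the right-hand side equals $\dom(\Johnson{m}{k})+\sum_{i=m}^{n}\dom(\Johnson{i}{k-1})$. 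This is the statement for $n+1$.

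There is no real obstacle here; the argument is a direct telescoping of Remark~\ref{rem:recursive-bound}. The only point requiring care is to confirm that every graph appearing in the chain is a valid Johnson graph, which the hypothesis $m\geq 2k$ guarantees. The reason for allowing a general starting value $m$ rather than starting the recursion at small $n$ is practical: one can begin from a known good dominating set of $\Johnson{m}{k}$, for example the set of size $9$ for $\Johnson{9}{4}$, and combine it with the known values of $\dom(\Johnson{i}{k-1})$ from Theorem~\ref{th:k=3}.
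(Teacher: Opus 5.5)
Your proof is correct and is essentially the paper's argument: induction on $n$, with the base case $n=m+1$ given directly by Remark~\ref{rem:recursive-bound}, and an inductive step that adds the term $\dom(\Johnson{n}{k-1})$ via the same remark. The only cosmetic difference is that the paper carries an explicit dominating set of size $f(n-1)$ through the induction, whereas you carry the inequality for $\dom(\Johnson{n}{k})$ itself; the two are equivalent.
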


\begin{proof}
We will show by induction that for $n\geq m+1$, the graph $\Johnson{n}{k}$ has a dominating set of cardinality
$$f(n)=\dom(\Johnson{m}{k})+\sum_{i=m}^{n-1}\dom(\Johnson{i}{k-1}).$$

As mentioned above,
$$\dom(\Johnson{m+1}{k})\leq \dom(\Johnson{m}{k})+\dom(\Johnson{m}{k-1}),$$
so the claim holds for $n=m+1$.

Now let $n\geq m+2$ and suppose $J(n-1,k)$ has a dominating set $D_1$ with $|D_1|=f(n-1)$. Let $D_2$ be a minimum dominating set of $J(n-1,k-1)$. By Remark~\ref{rem:recursive-bound}, the set
$$D=D_1\cup \{w\cup\{n\}:w\in D_2\}$$
is a dominating set of $\Johnson{n}{k}$ and satisfies
\begin{align*}
\dom(\Johnson{n}{k})\leq |D|&=\overbrace{\left(\dom(\Johnson{m}{k})+\sum_{i=m}^{n-2}\dom(\Johnson{i}{k-1})\right)}^{=|D_1|=f(n-1)}+\overbrace{\dom(\Johnson{n-1}{k-1})}^{=|D_2|}\\
&=\dom(\Johnson{m}{k})+\sum_{i=m}^{n-1}\dom(\Johnson{i}{k-1})=f(n),
\end{align*}
which completes the proof.
\end{proof}

From Theorem~\ref{th:k=3} we know that $\dom(\Johnson{n}{3})=\phi_n$ for all $n\geq 6$. Moreover, the set $D$ represented in Figure~\ref{fig:domset-J(9,4)} is a dominating set of $\Johnson{9}{4}$, so $\dom(\Johnson{9}{4})\leq 9$ and, consequently, we can establish the following upper bound for $\dom(\Johnson{n}{4})$.

\begin{coro}\label{cor: upperbound-J(n,4)}
For $n\geq 9$ we have
$$\dom(\Johnson{n}{4})\leq \sum_{i=6}^{n-1}\phi_i-2.$$
\end{coro}

\begin{proof}
First, observe that the set $D$ represented in Figure~\ref{fig:domset-J(9,4)} is a dominating set of $\Johnson{9}{4}$, so
$$\dom(\Johnson{9}{4})\leq 9=\sum_{i=6}^{8}\phi_i-2.$$

Now, for $n\geq 10$, by Theorem~\ref{thm: recursive-upper-bound} with $m=9$, we have
$$\dom(\Johnson{n}{4})\leq \dom(\Johnson{9}{4})+\sum_{i=9}^{n-1}\underbrace{\dom(\Johnson{i}{3})}_{=\phi_i}\leq \sum_{i=6}^{8}\phi_i-2+\sum_{i=9}^{n-1}\phi_i=\sum_{i=6}^{n-1}\phi_i-2.$$
\end{proof}

\begin{figure}[ht]
    \centering
    \begin{tabular}{|c|c|c|c|c|c|c|c|c|}
        \hline
         1 & 2 & 3 & 4 & 5 & 6 & 7 & 8 & 9 \\ \hline
         x & x & x & x &  &  &  &  &  \\ \hline
         x & x & x &  & x &  &  &  &  \\ \hline
         x & x & x &  &  & x &  &  &  \\ \hline
          &  &  & x & x & x & x &  &  \\ \hline
          &  &  & x & x & x &  & x &  \\ \hline
          &  &  & x & x & x &  &  & x \\ \hline
         x &  &  &  &  &  & x & x & x \\ \hline
          & x &  &  &  &  & x & x & x \\ \hline
          &  & x &  &  &  & x & x & x \\ \hline
    \end{tabular}
    \caption{Dominating set $D$ of $\Johnson{9}{4}$.}
    \label{fig:domset-J(9,4)}
\end{figure}

Using the identity $\sum_{j=r}^{m}\binom{j}{r}=\binom{m+1}{r+1}$, we establish the following upper bound for $\dom(\Johnson{n}{k})$ with $k\geq 4$.

\begin{coro}\label{cor: upper-bound p(n)}
For $k\geq 4$ and $n\geq 2k+1$, we have
\[
\dom(\Johnson{n}{k})\leq \frac{1}{6}\binom{n}{k-1}-p_k(n),
\]
where $p_k$ is a polynomial in $n$ of degree $k-2$.
\end{coro}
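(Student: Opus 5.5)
We argue by induction on $k$, with Theorem~\ref{thm: recursive-upper-bound} as the engine and the identity $\sum_{j=r}^{m}\binom{j}{r}=\binom{m+1}{r+1}$ as the summation tool. The base case is $k=4$. Here Corollary~\ref{cor: upperbound-J(n,4)} gives $\dom(\Johnson{n}{4})\le\sum_{i=6}^{n-1}\phi_i-2$ for $n\ge 9=2k+1$. By Theorem~\ref{th:k=3}, $\phi_i\le \frac{i^2-i+16}{12}$ in all residue cases, and the term $\frac{i^2-i}{12}=\frac16\binom{i}{2}$ is exactly the leading part. We will therefore bound $\phi_i\le\frac16\binom{i}{2}+\frac{16}{12}$. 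This is crude but gives a clean form, so that
$$\sum_{i=6}^{n-1}\phi_i-2\le \frac16\left(\binom{n}{3}-\binom{6}{3}\right)+\frac43(n-6)-2=\frac16\binom{n}{3}-p_4(n)$$
with $p_4$ linear, i.e.\ of degree $k-2=2$ or less. Since the statement only asks for a polynomial of degree $k-2$, we may absorb slack: any polynomial of lower degree can be replaced by one of degree exactly $k-2$ by subtracting a nonnegative-on-the-range quadratic. Alternatively, we read ``degree $k-2$'' as ``degree at most $k-2$'' and note this is what the recursion naturally produces. Writing $q_k(n)=\frac16\binom{n}{k-1}-\dom$-bound, the cleaner formulation is to prove the bound $\dom(\Johnson{n}{k})\le\frac16\binom{n}{k-1}-p_k(n)$ with $p_k$ of degree $k-2$ whose leading coefficient we track.

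For the inductive step, assume the statement for $k-1\ge4$: $\dom(\Johnson{i}{k-1})\le\frac16\binom{i}{k-2}-p_{k-1}(i)$ for $i\ge 2k-1$. Apply Theorem~\ref{thm: recursive-upper-bound} with $m=2k$ (note $m\ge 2(k-1)+1$, so the hypothesis applies to every $i\ge m$). For the initial term, bound $\dom(\Johnson{2k}{k})$ by the independence upper bound in \eqref{eq: bounds-J(n,k)}, or simply by a constant $C_k$; it only contributes a constant. Then
$$\dom(\Johnson{n}{k})\le C_k+\sum_{i=2k}^{n-1}\left(\frac16\binom{i}{k-2}-p_{k-1}(i)\right)=\frac16\binom{n}{k-1}-\left[\frac16\binom{2k}{k-1}-C_k+\sum_{i=2k}^{n-1}p_{k-1}(i)\right].$$
Since $p_{k-1}$ has degree $k-3$, the sum $\sum_{i=2k}^{n-1}p_{k-1}(i)$ is a polynomial in $n$ of degree $k-2$ (Faulhaber, or expansion of $p_{k-1}$ in the binomial basis $\binom{i}{j}$ followed by the hockey-stick identity). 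The bracket is thus a polynomial $p_k(n)$ of degree $k-2$, valid for $n\ge m+1=2k+1$, as required.

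The main obstacle is keeping ``degree exactly $k-2$'' honest. The leading coefficient of $p_k$ is that of $p_{k-1}$ divided by $k-2$, so it is nonzero, and in fact positive, as soon as $p_4$ has positive leading coefficient of the right degree. Hence the delicate point is the base case: we need $p_4$ genuinely quadratic. To achieve this, we will avoid the crude constant bound on $c_i$. Instead we write $\phi_i=\frac16\binom{i}{2}-\frac{i}{12}\cdot[i\text{ even}]+\frac{c_i}{12}$. The even-index terms then contribute $-\frac1{12}\sum_{i \text{ even}} i\approx -\frac{n^2}{48}$, while the $c_i$ contribute only a linear amount. This makes $p_4(n)=\frac{n^2}{48}+O(n)$, up to a bounded periodic correction; we handle that correction by weakening to a true polynomial, e.g.\ taking the worst constant over residues, which keeps the $n^2/48$ term. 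With this base case the induction propagates the exact degree $k-2$.
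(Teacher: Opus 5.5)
Your proposal is correct and follows the paper's proof: induction on $k$, with Theorem~\ref{thm: recursive-upper-bound} applied at $m=2k$ for the inductive step. For the base case $k=4$, the refinement you settle on (keeping the $-\frac{i}{12}$ term for even $i$ and bounding the periodic parts by polynomials) is exactly how the paper obtains $p_4(n)=\frac{3n^2-88n+946}{144}$, whose leading coefficient $\frac{1}{48}$ matches yours. The only differences are cosmetic: the paper bounds $\sum c_j$ by $\frac{41}{6}$ per index on average rather than by the worst case $16$, and your initial crude bound, which would make $p_4$ linear, should simply be dropped.
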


\begin{proof}
We proceed by induction on $k$. First, consider $k=4$.

Recall that by Theorem~\ref{th:k=3}, for $j
\geq 6$ we have $\dom(\Johnson{j}{3})=\phi_j$, where
\begin{equation}\label{eq: phi_k}
\phi_j=
\begin{cases}
\frac{j^2-j+c_j}{12}, &\text{if $j$ is odd,}\\
\frac{j^2-2j+c_j}{12}, &\text{if $j$ is even,}
\end{cases}
\end{equation}
with $c_j$ as in~(\ref{eq: cn}). Note that $\phi_j$ is defined for $j\geq 6$. However, we will consider $\phi_j$ for $j\in [5]$ defined as in~(\ref{eq: phi_k}).


Let $a,b \in \N_0$ be such that $m=12a+b$ with $a,b\in\N_0$, $0\leq b<12$. Note that $\sum_{j=1}^{b}c_j < \frac{41}{6}b$ (see Table~\ref{tab:ck}), and since $c_j$ is defined modulo $12$, we obtain
\begin{equation}\notag
    \sum_{j=1}^{m}c_j=a\underbrace{\left(\sum_{j=1}^{12}c_j\right)}_{=82}+\underbrace{\sum_{j=1}^{b}c_j}_{< \frac{41}{6}b}< 82a+\frac{41}{6}b=\frac{41}{6}(12a+b)=\frac{41}{6}m.
\end{equation}

\begin{table}[ht]
    \centering
    \begin{tabular}{|c|c|c|c|}
    \hline
        $b$ & $c_b$ & $\displaystyle\sum_{j=1}^b c_j$ & $\frac{41}{6}b$\\[2pt] \hline
        $1$     & $0$   & $0$   &  $6+\frac{5}{6}$\\[2pt] \hline
        $2$     & $0$   & $0$   & $13+\frac{2}{3}$\\[2pt] \hline
        $3$     & $6$   & $6$   & $20+\frac{1}{2}$\\[2pt] \hline
        $4$     & $4$   & $10$  & $27+\frac{1}{3}$\\[2pt] \hline
        $5$     & $4$   & $14$  & $34+\frac{1}{6}$\\[2pt] \hline
        $6$     & $0$   & $14$  & $41$\\[2pt] \hline
        $7$     & $6$   & $20$  & $47+\frac{5}{6}$\\[2pt] \hline
        $8$     & $12$  & $32$  & $54+\frac{2}{3}$\\[2pt] \hline
        $9$     & $12$  & $44$  & $61+\frac{1}{2}$\\[2pt] \hline
        $10$    & $16$  & $60$  & $68+\frac{1}{3}$\\[2pt] \hline
        $11$    & $10$  & $70$  & $75+\frac{1}{6}$\\[2pt] \hline
        $12$    & $12$  & $82$  & $82$\\[2pt] \hline
    \end{tabular}
    \caption{Comparison between $\sum_{j=1}^{b}c_j$ and $\frac{41}{6}b$ for $1\leq b\leq 12$.}
    \label{tab:ck}
\end{table}

Therefore,
\begin{align*}
12\left(\sum_{j=1}^{n-1}\phi_j\right) &= \sum_{j=1}^{n-1} j(j-1)-\sum_{j=1}^{\left\lfloor \frac{n-1}{2} \right\rfloor}2j+\sum_{j=1}^{n-1} c_j\\
&= \sum_{j=2}^{n-1} 2 \binom{j}{2}-\sum_{j=1}^{\left\lfloor \frac{n-1}{2} \right\rfloor}2j+\sum_{j=1}^{n-1} c_j\\
&=2\binom{n}{3}- \underbrace{\left\lfloor \frac{n-1}{2} \right\rfloor\left(\left\lfloor \frac{n-1}{2} \right\rfloor+1\right)}_{\geq \frac{(n-1)^2-1}{4}}+\sum_{j=1}^{n-1} c_j\\
&\leq 2\binom{n}{3}-\frac{(n-1)^2-1}{4}+ \frac{41}{6}(n-1)
= 2\binom{n}{3}-\frac{3n^2-88n+82}{12}.
\end{align*}

From this we obtain
\[
\sum_{j=6}^{n-1}\phi_j\leq\frac{1}{6}\binom{n}{3}-\frac{3n^2-88n+82}{144}-\sum_{j=1}^{5}\phi_j
=\frac{1}{6}\binom{n}{3}-\frac{3n^2-88n+82}{144}-4.
\]

Consequently, by Corollary~\ref{cor: upperbound-J(n,4)}, for $n\geq 9$ it follows that
\[
\dom(\Johnson{n}{4})\leq \sum_{j=6}^{n-1}\phi_j-2
\leq \frac{1}{6}\binom{n}{3}-\frac{3n^2-88n+82}{144}-6
=\frac{1}{6}\binom{n}{3}-\frac{3n^2-88n+946}{144}.
\]

Therefore, the claim holds for $k=4$ with
\[
p_4(n)=\frac{3n^2-88n+946}{144}.
\]

Now, let $k\geq 5$ and suppose the claim holds for $k-1$. That is, for all $n\geq 2k-1$ we have
\[
\dom(\Johnson{n}{k-1})\leq \frac{1}{6}\binom{n}{k-2}-p_{k-1}(n),
\]
where $p_{k-1}$ is a polynomial in $n$ of degree $k-3$.

Applying Theorem~\ref{thm: recursive-upper-bound} with $m=2k$, we obtain
\begin{align*}
\dom(\Johnson{n}{k})
&\leq \dom(\Johnson{2k}{k})+\sum_{i=2k}^{n-1} \dom(\Johnson{i}{k-1})\\
&\leq \dom(\Johnson{2k}{k})+\sum_{i=2k}^{n-1}\left(\frac{1}{6}\binom{i}{k-2}-p_{k-1}(i)\right)\\
& = \sum_{i=k-2}^{n-1}\frac{1}{6}\binom{i}{k-2}-\left(\sum_{i=k-2}^{2k-1} \frac{1}{6}\binom{i}{k-2}+\sum_{i=2k}^{n-1}p_{k-1}(i)-\dom(\Johnson{2k}{k})\right)\\
&=\frac{1}{6}\binom{n}{k-1}-\left(\sum_{i=2k}^{n-1}p_{k-1}(i)+\frac{1}{6}\binom{2k}{k-1}-\dom(\Johnson{2k}{k})\right).
\end{align*}

We define $p_k$ as follows:
\begin{equation}\label{eq: p(n)}
p_k(n)=\sum_{i=2k}^{n-1}p_{k-1}(i)+\frac{1}{6}\binom{2k}{k-1}-\dom(\Johnson{2k}{k}).
\end{equation}

In this way we obtain
\[
\dom(\Johnson{n}{k})\leq \frac{1}{6}\binom{n}{k-1}-p_k(n),
\]
where $p_k$ is a polynomial in $n$ of degree $k-2$. This proves the claim.
\end{proof}

The polynomials $p_k(n)$ of Corollary~\ref{cor: upper-bound p(n)} can be computed recursively following (\ref{eq: p(n)}), with $p_4(n)=\frac{3n^2-88n+946}{144}$.

Note that the difference between the upper bound in~(\ref{eq: bounds-J(n,k)}) and the one established in Corollary~\ref{cor: upper-bound p(n)} is
\[
\frac{1}{k}\binom{n}{k-1}-\left(\frac{1}{6}\binom{n}{k-1}-p_k(n)\right)
=\frac{6-k}{6k}\binom{n}{k-1}+p_k(n).
\]

For $k=4$ and $k=5$, this is a polynomial in $n$ of degree $k-1$ that is positive for all $n$.
In the case $k=6$, this difference is exactly $p_5(n)$ and it is positive for all $n\geq 15$.
For $k\geq 7$, we obtain a polynomial in $n$ with negative leading coefficient.

Note that, although both the general bound~(\ref{eq: bounds-J(n,k)}) and the one in Corollary~\ref{cor: upper-bound p(n)} are, for fixed $k$, polynomials in $n$ of the same order (degree $k-1$), the fact that the leading coefficient of the difference between the two is negative for $k\geq 7$ implies that the improvement obtained does not hold for large $n$: from a certain value of $n$ onward (which grows with $k$), the difference becomes negative and the general bound~(\ref{eq: bounds-J(n,k)}) turns out to be tighter than the one given by Corollary~\ref{cor: upper-bound p(n)}. That is, our recursive construction effectively improves on the classical bound only for $k\leq 6$, and only in a bounded range of values of $n$ when $k\geq 7$.

In particular, we obtain the following improved upper bounds for $k\in\{4,5,6\}$.

\begin{coro}\label{cor: upper-bound r=4,5}
\begin{enumerate}
\item For $n\geq 9$, 
\[
\dom(\Johnson{n}{4})\leq \frac{1}{6}\binom{n}{3}-\frac{3n^2-88n+946}{144}.
\]

\item For $n\geq 11$, 
\[
\dom(\Johnson{n}{5})\leq \frac{1}{6}\binom{n}{4}-\frac{2n^3-91n^2+1981n-14726}{288}.
\]

\item For $n\geq 15$, 
\[
\dom(\Johnson{n}{6})\leq \frac{1}{6}\binom{n}{5}-\frac{3n^4-188n^3+6219n^2-94390n+499800}{1728}.
\]
\end{enumerate}
\end{coro}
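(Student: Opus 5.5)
Items 1--3 come from specializing Corollary~\ref{cor: upper-bound p(n)} and evaluating the recursion (\ref{eq: p(n)}) explicitly for $k=4,5,6$. Item~1 needs no further work: it is exactly the base case $k=4$ of Corollary~\ref{cor: upper-bound p(n)}, with $p_4(n)=\frac{3n^2-88n+946}{144}$, valid for $n\geq 9$ by Corollary~\ref{cor: upperbound-J(n,4)}.

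For item~2, I would apply Theorem~\ref{thm: recursive-upper-bound} with $k=5$ and $m=10$. Since $i\geq 10\geq 9$ for every summand, item~1 gives $\dom(\Johnson{i}{4})\leq \frac16\binom{i}{3}-p_4(i)$. I would then use $\sum_{i=3}^{n-1}\binom{i}{3}=\binom{n}{4}$ to obtain
\[
p_5(n)=\sum_{i=10}^{n-1}\frac{3i^2-88i+946}{144}+\frac16\binom{10}{4}-\dom(\Johnson{10}{5}).
\]
Two observations make this work. First, the proof of Corollary~\ref{cor: upper-bound p(n)} remains valid if $\dom(\Johnson{2k}{k})$ is replaced by any upper bound $U_{2k}$, since this only decreases $p_k$. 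Second, $\frac16\binom{10}{4}=35$. The remaining task is the closed-form evaluation of the sum by Faulhaber's formulas,
\[
\sum_{i=10}^{n-1}i^2=\frac{(n-1)n(2n-1)}{6}-285,\qquad \sum_{i=10}^{n-1}i=\frac{n(n-1)}{2}-45,\qquad \sum_{i=10}^{n-1}1=n-10,
\]
followed by collecting everything over the denominator $288$. The cubic coefficient is $\frac{3\cdot 2}{6\cdot 144}=\frac{2}{288}$, which matches the stated leading term $2n^3$. The $n^2$ and $n$ coefficients are then forced, and I would check that they come out to $-91$ and $1981$. The constant term $-14726$ depends on the value used for $\dom(\Johnson{10}{5})$. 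I would therefore exhibit an explicit dominating set of $\Johnson{10}{5}$, in the style of Figure~\ref{fig:domset-J(9,4)} (e.g.\ built from blocks sharing a common triple, or from a known lotto design $L(10,5,5,4)$), whose size produces exactly this constant.

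Item~3 is the same computation one level up. I would apply Theorem~\ref{thm: recursive-upper-bound} with $k=6$ and an $m\geq 12$, feeding in item~2 for every $\dom(\Johnson{i}{5})$ with $i\geq m\geq 11$ and summing the cubic $p_5$ via the fourth-power-sum formulas, now over the denominator $1728$. The constant term again requires an explicit upper bound on $\dom(\Johnson{m}{6})$ from a concrete dominating set. The threshold $n\geq 15$ in the statement suggests that the authors either start the recursion at a larger $m$ or restrict to the range where the resulting bound is meaningful; I would choose $m$ to match.

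The main obstacle is not the algebra, which is routine telescoping of power sums, but the base values $\dom(\Johnson{10}{5})$ and the corresponding base for $k=6$. These are needed as explicit upper bounds, so each requires producing and verifying a concrete dominating set (checking that every $k$-subset meets some block in at least $k-1$ points). I would first try to obtain them by gluing smaller optimal sets via Remark~\ref{rem:recursive-bound} and then improving by hand, as was done for $\Johnson{9}{4}$. I would then back out from the stated constants which values are required and confirm that a set of that size exists.
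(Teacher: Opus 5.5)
Your plan is correct and matches how the paper gets these bounds: specialize Corollary~\ref{cor: upper-bound p(n)} with $m=2k$, evaluate the recursion \eqref{eq: p(n)} by power sums, and replace $\dom(\Johnson{2k}{k})$ by an upper bound; your intermediate coefficients $-91$ and $1981$ do come out as stated. The base values you call the main obstacle need no explicit construction, because they are just the general bound \eqref{eq: bounds-J(n,k)}: $\dom(\Johnson{10}{5})\le\binom{10}{5}/6=42$ and $\dom(\Johnson{12}{6})\le\binom{12}{6}/7=132$ reproduce the constants $-14726$ and $499800$ exactly, with $m=12$ in item~3, where the threshold $n\geq 15$ only marks where $p_6(n)>0$ (the inequality itself already holds for $n\geq 13$).
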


\section{Total domination in Johnson graphs}\label{sec: tdom-Johnson}

In this section we study total dominating sets in Johnson graphs $J(n,k)$, with a particular focus on determining the total domination numbers for $k=2$ and $k=3$. Firstly, we state a monotonicity result by proving that $\tdom(\Johnson{n}{k})$ is non decreasing with respect to $n$. We determine the value of $\tdom(\Johnson{n}{2})$. Next, we address the case $k=3$, proposing a translation of total dominating sets into edge triangle coverings (with an additional property) of graphs with order $n$ and independence number at most $2$. With this approach, we establish the value of $\tdom(\Johnson{n}{3})$ for every $n\geq 10$.

Let start with the mentioned monotonicity result.

\begin{theorem}\label{thm: monotonicity total Johnson}
    If $n\geq 2k$, then $\tdom(\Johnson{n}{k})\leq\tdom(\Johnson{n+1}{k})$.
\end{theorem}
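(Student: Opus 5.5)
The plan is to reuse the surjective weak homomorphism $f$ of Lemma~\ref{l:JohnsonHomomor} and repair the single way in which weak homomorphisms fail to preserve total domination. Let $D$ be a $\tdom$-set of $\Johnson{n+1}{k}$ and put $D_0=f(D)$, so that $|D_0|\le |D|$. Recall that $f(x)=x$ for every $x\in V(\Johnson{n}{k})$. We will build from $D_0$ a total dominating set $D'$ of $\Johnson{n}{k}$ with $|D'|\le|D|$. This gives $\tdom(\Johnson{n}{k})\le\tdom(\Johnson{n+1}{k})$.

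\textbf{Step 1: locate the defect.} Fix $x\in V(\Johnson{n}{k})$. Since $D$ is total dominating in $\Johnson{n+1}{k}$, $x$ has a neighbour $y\in D$. Because $f$ is a weak homomorphism, $f(y)$ is either adjacent to $f(x)=x$ or equal to $x$. Call $x$ \emph{bad} if no vertex of $D_0$ is adjacent to $x$. Then every neighbour $y\in D$ of $x$ satisfies $f(y)=x$; in particular some $y\in D$ with $y\neq x$ lies in $f^{-1}(x)$.

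\textbf{Step 2: a bad vertex has a fibre of size at least $2$.} Let $x$ be bad and take $y$ as in Step~1. Since $D$ is total, $y$ has a neighbour $z\in D$, and $z\neq y$. Then $f(z)$ is adjacent or equal to $f(y)=x$. It cannot be adjacent, because $x$ is bad, so $f(z)=x$. Hence $|f^{-1}(x)\cap D|\ge 2$. Consequently, if $B$ denotes the set of bad vertices, then
\[
|D_0|\le |D|-|B|,
\]
since each bad $x$ is the common image of at least two distinct elements of $D$.

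\textbf{Step 3: repair.} For each bad $x$, choose an arbitrary neighbour $w_x$ of $x$ in $\Johnson{n}{k}$; one exists since the graph has no isolated vertices. Set $D'=D_0\cup\{w_x: x\in B\}$. Then $|D'|\le|D_0|+|B|\le|D|$. Every vertex of $\Johnson{n}{k}$ is now totally dominated: a non-bad vertex already has a neighbour in $D_0\subseteq D'$, and a bad $x$ has the neighbour $w_x\in D'$. Adding vertices never destroys domination of other vertices, and the added vertices $w_x$ need no separate check, because they are themselves vertices of $\Johnson{n}{k}$ and fall under one of the two cases. Hence $D'$ is a total dominating set of $\Johnson{n}{k}$, which proves the theorem.

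The main obstacle is Step~2. The naive argument (``the image of a total dominating set under a weak homomorphism is total dominating'') fails exactly when a neighbour collapses onto $x$. The key observation is that such a collapse can only leave $x$ undominated if the $D$-neighbour guaranteeing totality of $y$ also collapses onto $x$. This forces a repeated image and hence frees up one vertex of budget, which we then spend on the repair.
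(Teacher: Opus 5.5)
Your proof is correct, and it takes a genuinely different route from the paper. The paper does not use the map $f$ of Lemma~\ref{l:JohnsonHomomor} for the total version. It processes the sets $w\in D$ containing $n+1$ one at a time. Each is replaced by $\hat w\cup\{x\}$, where $\hat w=w\setminus\{n+1\}$ and $x$ is the largest element of $[n]\setminus\hat w$ for which this set is not yet present; nothing is added if all such sets are present. Collisions are therefore avoided during the construction, and $|D'|\le|D|$ is immediate. Total domination is then checked directly: when $v$'s only candidate dominator is the shadow of $w$, the paper takes a $D$-neighbour $u$ of $w$ and compares $x=\max([n]\setminus\hat w)$ with $y=\max([n]\setminus\hat u)$. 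You instead keep the fixed map $f$ and allow collisions. You then show that collisions pay for the repair: a vertex that loses its dominator has a fibre of size at least $2$ in $D$, which frees one unit of budget for the added neighbour.

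Your route is shorter. It uses only three facts: $f$ is a weak homomorphism, $f$ fixes $V(\Johnson{n}{k})$ pointwise, and $\Johnson{n}{k}$ has no isolated vertices. So the same argument shows that a surjective weak homomorphism onto a graph without isolated vertices never increases $\tdom$, which is the total analogue of Remark~\ref{r:weakHomom}.

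It also avoids a weak spot in the paper's case analysis, which tacitly assumes that the shadow $u'=\hat u\cup\{y\}$ differs from $v$. Take $\Johnson{5}{2}$ with the minimum total dominating set $D=\{\{4,5\},\{3,5\},\{1,3\}\}$, and process $\{4,5\}$ before $\{3,5\}$. This gives $D'=\{\{1,3\},\{3,4\},\{2,3\}\}$. For $v=\{3,4\}$ and $w=\{4,5\}$, the set $v$ is the only element of $D'$ containing $\hat w=\{4\}$, and $w$'s only $D$-neighbour is $u=\{3,5\}$. Then $x=3<y=4$, yet $u'=\{3,4\}=v$. Here $v$ is still dominated, but an extra argument about the order in which $w$ and $u$ are processed is needed.

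In exchange, the paper's construction needs no repair step: every element of $D'$ is directly a shadow of an element of $D$, and no arbitrary neighbours have to be chosen.
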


\begin{proof}
    Let $D$ be a minimum total dominating set of $\Johnson{n+1}{k}$, $D=D_0\cup \hat{D}$, where $D_0=\{w\in D:n+1\notin w\}$ and $\hat{D}=\{w\in D:n+1\in D\}$. 
    We index the vertices in $\hat{D}$, $\hat{D}=\{w_1,\ldots,w_r\}$, and iteratively build sets $D_j$ for $j\in [r]$ as follows. For each $j\in[r]$, let $\hat{w_j}=w_j\setminus \{n+1\}$. If for every $x\in [n]\setminus \hat{w_j}$, $\hat{w_j}\cup\{x\}\in D_{j-1}$, then $D_j:=D_{j-1}$. Otherwise, let $x$ be the largest number in $[n]\setminus \hat{w_j}$ such that $\hat{w_j}\cup\{x\}\notin D_{j-1}$. We define $w_j'=\hat{w_j}\cup\{x\}$, and $D_{j}:=D_{j-1}\cup\{w_j'\}$.
    
    Let $D'=D_r$. We will show that $D'$ is a total dominating set of $\Johnson{n}{k}$. Let $v\in V(\Johnson{n}{k})$. As $v\in\Johnson{n+1}{k}$, there exists $w\in D\cap N_{n+1}(v)$. 
    If $w\in D_0$, then $w\in D_0\cap N_n(v)\subseteq D'\cap N_n(v)$. Otherwise, $w=\hat{w}\cup \{n+1\}$, and $v\cap w=\hat{w}$. By the construction of $D'$, $\hat{w}\cup\{x\}\in D'$ for some $x\in [n]$.
    If for some $x\in[n]\setminus v$, $u=\hat{w}\cup\{x\}\in D'$, then $u\in D'\cap N_n(v)$. 
    Thus, assume $v$ is the only such vertex in $D'$, i.e.\ the only vertex in $D'$ that contains all elements of $\hat{w}$ is $v$. Hence $v=\hat{w}\cup\{x\}$ and by construction $x=\max([n]\setminus w)=\max([n]\setminus \hat{w})$. Besides, it follows that no vertex in $D_0$ contain $\hat{w}$.

    On the other hand, since $w$ is totally dominated by $D$, there exists $u\in D\cap N_{n+1}(w)$. If $u\in D_0$, then as $u$ and $w$ are adjacent and $n+1\notin u$, it follows $\hat{w}\subseteq u$.  which is not possible. Thus we may assume $u\notin D_0$. 
     So $u=\hat{u}\cup \{n+1\}$ with $|\hat{u}\cap \hat{w}|=k-2$. By construction of $D'$, $u'=\hat{u}\cup\{y\}\in D'$, where $y=\max([n]\setminus u)=\max([n]\setminus \hat{u})$.
   If $x=\max([n]\setminus \hat{w})=\max([n]\setminus \hat{u})=y$, then $|u'\cap v|=|(\hat{w}\cap \hat{u})\cup\{x\}|=k-1$. If $x<y$, then $y\in \hat{w}$, so $|u'\cap v|=|\hat{w}|=k-1$. Finally, if $x>y$, then $x\in \hat{u}$, so $|u'\cap v|=|\hat{u}|=k-1$. In any case, $u'\in D'\cap N_n(v)$.
    
    Therefore, $D'$ is a total dominating set of $\Johnson{n}{k}$ with $|D'|\leq|D|$ and so $\tdom(\Johnson{n}{k})\leq |D'|\leq |D|=\tdom(\Johnson{n+1}{k})$, which completes the proof.
\end{proof}


\begin{theorem}\label{teo: tdom Johnson r=2}
    For $n\geq 4$, $\dom_t(\Johnson{n}{2})=\left\lceil\frac{2}{3}(n-1)\right\rceil$.
\end{theorem}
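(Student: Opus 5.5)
The plan is to use the fact that $\Johnson{n}{2}$ is the line graph of $K_n$. This turns a total dominating set of $\Johnson{n}{2}$ into a set $F$ of edges of $K_n$ with a simple structural description. The lower bound then becomes a counting argument on the components of $F$, and the upper bound is an explicit construction using paths.

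\emph{Translation.} Identify each vertex $\{a,b\}$ of $\Johnson{n}{2}$ with the edge $ab$ of $K_n$. Two vertices are adjacent exactly when the corresponding edges share an endpoint. Let $D$ correspond to an edge set $F\subseteq E(K_n)$, and let $V(F)$ be the set of endpoints of edges of $F$. I would check that $D$ is a total dominating set if and only if both of the following hold.
\begin{enumerate}[(a)]
\item Every edge $uv\notin F$ has an endpoint in $V(F)$. Equivalently, $[n]\setminus V(F)$ is independent in $K_n$, so $|V(F)|\ge n-1$.
\item Every edge of $F$ shares an endpoint with another edge of $F$. Equivalently, no connected component of the graph $([n],F)$ restricted to $V(F)$ is a single edge.
\end{enumerate}
For (a), note that an edge not in $F$ with both endpoints outside $V(F)$ has no neighbour in $D$. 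Conversely, an edge $e\notin F$ with an endpoint in $V(F)$ meets some edge of $F$, which is distinct from $e$. For (b), the edges of $F$ themselves must each have a neighbour in $D$.

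\emph{Lower bound.} Let $m=|V(F)|\ge n-1$. By (b), every component of $(V(F),F)$ has at least $3$ vertices, so there are at most $\lfloor m/3\rfloor$ components. A component with $c$ vertices has at least $c-1$ edges. Summing over components gives
\[
|F|\ \ge\ m-\lfloor m/3\rfloor=\lceil 2m/3\rceil\ \ge\ \lceil 2(n-1)/3\rceil .
\]

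\emph{Upper bound.} Put $m=n-1$, leave vertex $n$ uncovered, and cover $[n-1]$ by vertex-disjoint paths.
\begin{itemize}
\item If $m\equiv 0\pmod 3$, use $m/3$ copies of $P_3$. This gives $2m/3$ edges.
\item If $m\equiv 1\pmod 3$, then $m\ge 4$ because $n\ge 4$. Use one $P_4$ and copies of $P_3$. This gives $3+2(m-4)/3=\lceil 2m/3\rceil$ edges.
\item If $m\equiv 2\pmod 3$, then $m\ge 5$. Use one $P_5$ and copies of $P_3$. This gives $4+2(m-5)/3=(2m+2)/3=\lceil 2m/3\rceil$ edges.
\end{itemize}
Each of these edge sets satisfies (a) and (b), so it is a total dominating set of the required size.

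The only step that needs care is verifying the equivalence with (a) and (b), in particular that an edge in $F$ must be dominated by a \emph{different} edge of $F$. The rest is routine, including the small cases $n=4,5,6$, which are covered by the residue analysis above.
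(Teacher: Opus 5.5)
Your proof is correct. The upper bound is essentially the paper's construction: vertex-disjoint copies of $P_3$, with one $P_4$ absorbing the residue when $n\equiv 2 \pmod 3$. The only difference is when $3\mid n$: the paper covers all of $[n]$ by copies of $P_3$, while you use a $P_5$ plus copies of $P_3$ on $[n-1]$, with the same edge count.

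The lower bound is where you take a genuinely different route. The paper does not pass to the line graph of $K_n$ or look at components. It builds the bipartite incidence graph between $D$ and $[n]$ and lets $X$ be the elements lying in at least two pairs of $D$. A double count on the subgraph induced by $D\cup X$ then gives $\sum_{x\in X}(\deg_H(x)-1)\ge |D|/2$. Combining this with $\sum_{x\in[n]}\deg(x)=2|D|$ and the fact that at least $n-1$ elements are covered yields $2|D|\ge (n-1)+|D|/2$.

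Your component count is more transparent. Each component of $F$ is connected on at least $3$ vertices, so it has at least $c-1$ edges, and there are at most $\lfloor m/3\rfloor$ components. This shows directly what an optimal set must look like: a forest whose components have order at least three and which spans at least $n-1$ points. That makes the matching path construction immediate. It also delivers the ceiling without any rounding step.

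The paper's purely local double counting has a different advantage. It is exactly the template reused in Lemma~\ref{lem: edges CEOT} to prove $|E(G)|\le \frac52|D|$ for coverings by edge-overlapping triangles, so the paper's choice gives a uniform argument for the cases $k=2$ and $k=3$.
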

\begin{proof}
    Let $D$ be a minimum total dominating set of $\Johnson{n}{2}$.
    We define a bipartite graph $G$ with $V(G)=D\cup [n]$, where for $u\in D$ and $x\in [n]$, $u$ and $x$ are adjacent in $G$ if and only if $x\in u$. Let $X\subseteq [n]$ be the set of all $x\in [n]$ that occur in more than one vertex of $D$, and $H=G[D\cup X]$, that is, the subgraph of $G$ induced by $D\cup X$.

    Observe that if $u\in D$, since $D$ is a total dominating set, there exists $v\in D\setminus\{u\}$ such that $|u\cap v|=1$. Then, for some $x\in X$, we have $u\cap v=\{x\}$, so $\text{deg}_H(u)\geq 1$.
    Also, for each $x\in X$, we have $\text{deg}_H(x)\geq 2$.
    Therefore,
    $$2\left(\sum_{x\in X}\text{deg}_H(x)\right)=2|E(H)|=\sum_{u\in D}\text{deg}_H(u)+\sum_{x\in X}\text{deg}_H(x)\geq |D|+2|X|.$$
    Then
    $$\sum_{x\in X} (\text{deg}_H(x)-1)\geq \frac{|D|}{2}.$$
    On the other hand, let $X'=\{x\in [n]:x\in u\text{ for some }u\in D\}=\{x\in u:\text{deg}_G(x)\geq 1\}$. Note that if $x,y\in [n]\setminus X'$, then the vertex $\{x,y\}\in V(\Johnson{n}{2})$ has no neighbor in $D$, contradicting that $D$ is dominating. Therefore, there cannot be two elements outside $X'$, and so $|X'|\geq n-1$. Then
    $$2|D|=\sum_{u\in D}\text{deg}_G(u)=\sum_{x\in [n]}\text{deg}_G(x)= |X'|+\sum_{x\in X}(\text{deg}_H(x)-1)\geq n-1+\frac{|D|}{2},$$
    so
    $$|D|\geq \left\lceil\frac{2(n-1)}{3}\right\rceil.$$
    
    Now let us see that there exists a total dominating set of $\Johnson{n}{2}$ of cardinality $\left\lceil\frac{2(n-1)}{3}\right\rceil$. Let $k,\ell$ be non-negative integers such that $n=3k+\ell$ with $0\leq \ell\leq 2$. We define, for $\ell\in \{0,1\}$, the set
    $$D=\bigcup_{i=0}^{k-1}\left\{\{3i+1,3i+2\},\{3i+2,3i+3\}\right\},$$
    and for $\ell=2$, the set
    $$D=\left(\bigcup_{i=0}^{k-1}\left\{\{3i+1,3i+2\},\{3i+2,3i+3\}\right\}\right)\cup\{n-2,n-1\}.$$
    It is easy to see that $D$ is a total dominating set of $\Johnson{n}{2}$ of cardinality $\left\lceil\frac{2}{3}(n-1)\right\rceil$.
\end{proof}

\subsection{Johnson graphs \texorpdfstring{$\Johnson{n}{3}$}{J(n,3)}}

We focus now on Johnson graphs $\Johnson{n}{3}$. We will state a translation of total dominating sets of these graphs into edge triangle coverings of graphs of order $n$ and independence number at most $2$. In order to do so, we introduce the following definitions.

Given a graph $G$ of order $n$, a collection $C$ of triangles of $G$ is an \textit{edge triangle covering} if for each $uv\in E(G)$, there exists $T\in C$ such that $\{u,v\}\subseteq T$. Given two triangles $T,S$ of $G$, we say that $T$ and $S$ \textit{overlap} if $|T\cap S|=2$. This is, if they have exactly two vertices in common.
We say that $\mathcal C$ is a \textit{covering by edge-overlapping triangles} (CEOT), if $\mathcal C$ is an edge triangle covering such that for each $T\in C$ there exists $S\in C$ such that $T$ and $S$ overlap. We denote by $\rho_{\Delta}^s(G)$ the minimum cardinality of a CEOT of $G$. If $G$ does not admit a CEOT, $\rho_{\Delta}^s(G)=\infty$.

In~\cite{CDT-2025}, it was shown that a set $D \subseteq V(J(n,3))$ is a dominating set of $J(n,3)$ if and only if $D$ is an edge triangle covering of a graph $G$ of order $n$ with $\alpha(G) \leq 2$. This correspondence was then used to determine $\gamma(J(n,3))$. Similarly, we will establish a correspondence between total dominating sets of $J(n,3)$ and CEOTs of graphs of order $n$ with $\alpha(G) \leq 2$, which will then allow us to determine the exact value of $\tdom(J(n,3))$. For this we recall the following definition introduced in~\cite{CDT-2025}. Let $D$ be an arbitrary family of subsetes of $[n]$ of cardinality 3. Denote by $G^D$ the graph with vertex set $[n]$ and two vertices $x$ and $y$ being adjacent in $G^D$ if there exists some $u \in D$ that contains both $x$ and $y$.

\begin{prop}\label{prop:covering-overlapping}
    Let $D\subseteq V(\Johnson{n}{3})$. Then $D$ is a total dominating set of $\Johnson{n}{3}$ if and only if $D$ is a CEOT of some graph $G$ of order $n$ with $\alpha(G)\leq 2$.
\end{prop}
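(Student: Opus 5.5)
The plan is to reduce to the domination correspondence from \cite{CDT-2025}: a set $D\subseteq V(\Johnson{n}{3})$ is a dominating set if and only if $D$ is an edge triangle covering of some graph $G$ of order $n$ with $\alpha(G)\leq 2$. Total domination adds only one requirement to domination, namely that every $u\in D$ have a neighbor in $D$. For $3$-sets, $u,v\in D$ are adjacent in $\Johnson{n}{3}$ exactly when $|u\cap v|=2$, that is, when $u$ and $v$, viewed as triangles, overlap. So the overlapping condition in a CEOT is precisely the extra condition that makes a dominating set total. The proof then splits cleanly into two implications.

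\emph{($\Rightarrow$)} Let $D$ be a total dominating set. Take $G=G^D$. Every edge $xy$ of $G^D$ lies in some $u\in D$, and each $u\in D$ is a triangle of $G^D$, so $D$ is an edge triangle covering of $G^D$.

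To show $\alpha(G^D)\leq 2$, suppose $\{x,y,z\}$ is independent in $G^D$. Then no $u\in D$ contains two of $x,y,z$. Hence the vertex $\{x,y,z\}$ meets every element of $D$ in at most one point. In particular $\{x,y,z\}\notin D$, and it has no neighbor in $D$, contradicting domination.

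Finally, each $u\in D$ has a neighbor $v\in D$ by totality, so $|u\cap v|=2$ and $u,v$ overlap. Thus $D$ is a CEOT of $G^D$.

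\emph{($\Leftarrow$)} Let $D$ be a CEOT of some $G$ on $[n]$ with $\alpha(G)\leq 2$, and let $w=\{x,y,z\}$ be any vertex of $\Johnson{n}{3}$. Since $\alpha(G)\leq 2$, some pair, say $xy$, is an edge of $G$, so some $T\in D$ contains $\{x,y\}$. If $T\neq w$, then $|T\cap w|=2$ and $T$ is a neighbor of $w$ in $D$. If $T=w$, the CEOT property gives $S\in D$ overlapping $T$, so $|S\cap w|=2$ and $S$ is a neighbor of $w$ in $D$. Hence every vertex, including those in $D$, has a neighbor in $D$, and $D$ is total dominating.

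The only step that needs care is the case $T=w$ in the converse. The plain covering argument only yields a block meeting $w$ in at least two points, and when that block is $w$ itself, the overlap condition must be invoked to get an actual neighbor. Everything else is the routine translation through $G^D$.
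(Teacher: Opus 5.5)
Your proof is correct and follows essentially the same route as the paper: in the forward direction you take $G=G^D$ and check covering, overlap and $\alpha(G^D)\leq 2$ directly. In the converse you use $\alpha(G)\leq 2$ together with the covering property, and invoke overlap only when the covering triangle is $w$ itself; the paper instead splits on whether $w\in D$, which is equivalent (and, like the paper, your argument never actually needs the cited domination correspondence).
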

\begin{proof}
        Let $D$ be a total dominating set of $\Johnson{n}{3}$. It follows that each $u\in D$ is a triangle of $G^D$, and each edge of $G^D$ belongs to at least one triangle in $D$. Thus $D$ is an edge triangle covering of $G^D$. Moreover, as $D$ is a total dominating set of $\Johnson{n}{3}$, for each $u\in D$, there exists $v\in D$ with $|u\cap v|=2$. This is, there exists a triangle $v\in D$ such that $u$ and $v$ overlap. Therefore $D$ is a CEOT of $G^D$.
        Suppose that $\alpha(G^D) \geq 3$. Then, there exist three vertices $x$, $y$ and $z$ pairwise non adjacent in $G^D$. By definition of $G^D$, $\{x,y,z\} \in V(\Johnson{n}{3})$ is not in $D$ and has no neighbor in $D$, arising to a contradiction. So $\alpha(G^D) \leq 2$. Therefore, $D$ is a CEOT of $G^D$ which has order $n$ and $\alpha(G^D)\leq 2$.

        Conversely, let $D'$ be a CEOT of a graph $G$ of order $n$ with $\alpha(G)\leq 2$, and let $u\in V(\Johnson{n}{3})$. If $u\in D'$, since the triangles in $D'$ overlap, there exists $v\in D'$ with $|u\cap v|=2$. If $u\notin D'$, then as $\alpha(G)\leq 2$, there exist $a,b\in u$ such that $ab\in E(G)$. Since $D'$ is an edge triangle covering, there exists $v\in D'$ such that $\{a,b\}\subseteq v$, so $|u\cap v|=2$. That is, in either case, there exists $v\in D'$ such that $u$ and $v$ are adjacent. Therefore, $D'$ is a total dominating set of $\Johnson{n}{3}$.
    \end{proof}

As a consequence of this result, we have that determining $\gamma_t(J(n,3))$ is then equivalent to determining 
\begin{equation}\label{eq: t_n^s}
    t_n^s=\min\{\rho_{\Delta}^s(G):G\text{ a graph with }|V(G)|=n, \alpha(G)\leq 2\}.
\end{equation}
That is, $\tdom(\Johnson{n}{3})=t_n^s$. Moreover, if $D$ is a $\tdom$-set of $J(n,3)$, then
\begin{equation}\label{eq: tn D dom minimo}
    t_n^s=|D|=\rho_{\Delta}^s(G^D).
\end{equation}

\begin{figure}[ht]
    \centering
    \begin{tikzpicture}[scale=0.75]
    \begin{scope}
        
        \def\r{3}
        \def\rr{6}
    
        \node (245) at ({15+0*30}:\r) {};
        \node (246) at ({15+1*30}:\r) {};
        \node (256) at ({15+2*30}:\r) {};
        \node (134) at ({15+3*30}:\r) {};
        \node (135) at ({15+4*30}:\r) {};
        \node (136) at ({15+5*30}:\r) {};
        \node (145) at ({15+6*30}:\r) {};
        \node (146) at ({15+7*30}:\r) {};
        \node (156) at ({15+8*30}:\r) {};
        \node (234) at ({15+9*30}:\r) {};
        \node (235) at ({15+10*30}:\r) {};
        \node (236) at ({15+11*30}:\r) {};
        
        \node (123) at ({135+0*30}:\rr) {};
        \node (124) at ({135+1*30}:\rr) {};
        \node (125) at ({135+2*30}:\rr) {};
        \node (126) at ({135+3*30}:\rr) {};
        
        \node (456) at ({45-0*30}:\rr) {};
        \node (356) at ({45-1*30}:\rr) {};
        \node (346) at ({45-2*30}:\rr) {};
        \node (345) at ({45-3*30}:\rr) {};

        \draw (123) -- (124);
        \draw (123) -- (125);
        \draw (123) -- (126);
        \draw (124) -- (125);
        \draw (124) -- (126);
        \draw (125) -- (126);
        
        \draw (123) -- (134);
        \draw (123) -- (135);
        \draw (123) -- (136);
        \draw (124) -- (134);
        \draw (124) -- (145);
        \draw (124) -- (146);
        \draw (125) -- (135);
        \draw (125) -- (145);
        \draw (125) -- (156);
        \draw (126) -- (136);
        \draw (126) -- (146);
        \draw (126) -- (156);
        
        \draw (123) -- (234);
        \draw (123) to[bend right = 10] (235);
        \draw (123) -- (236);
        \draw (124) -- (234);
        \draw (124) -- (245);
        \draw (124) -- (246);
        \draw (125) -- (235);
        \draw (125) to[bend right = 10] (245);
        \draw (125) -- (256);
        \draw (126) -- (236);
        \draw (126) to[bend right = 10] (246);
        \draw (126) -- (256);

        \draw (134) -- (135);
        \draw (134) -- (136);
        \draw (134) -- (145);
        \draw (134) -- (146);
        \draw (135) -- (136);
        \draw (135) -- (145);
        \draw (135) -- (156);
        \draw (136) -- (146);
        \draw (136) -- (156);
        \draw (145) -- (146);
        \draw (145) -- (156);
        \draw (146) -- (156);

        \draw (345) -- (346);
        \draw (345) -- (356);
        \draw (345) -- (456);
        \draw (346) -- (356);
        \draw (346) -- (456);
        \draw (356) -- (456);

        \draw (345) -- (134);
        \draw (345) to[bend left = 10] (135); 
        \draw (345) -- (145);
        \draw (346) -- (134);
        \draw (346) to[bend left = 10] (136); 
        \draw (346) -- (146);
        \draw (356) -- (135);
        \draw (356) -- (136);
        \draw (356) -- (156);
        \draw (456) -- (145);
        \draw (456) to[bend left = 10] (146); 
        \draw (456) -- (156);

        \draw (345) -- (234);
        \draw (345) -- (235);
        \draw (345) -- (245);
        \draw (346) -- (234);
        \draw (346) -- (236);
        \draw (346) -- (246);
        \draw (356) -- (235);
        \draw (356) -- (236);
        \draw (356) -- (256);
        \draw (456) -- (245);
        \draw (456) -- (246);
        \draw (456) -- (256);

        \draw (234) -- (235);
        \draw (234) -- (236);
        \draw (234) -- (245);
        \draw (234) -- (246);
        \draw (235) -- (236);
        \draw (235) -- (245);
        \draw (235) -- (256);
        \draw (236) -- (246);
        \draw (236) -- (256);
        \draw (245) -- (246);
        \draw (245) -- (256);
        \draw (246) -- (256);

        \draw (134) -- (234);
        \draw (135) -- (235);
        \draw (136) -- (236);
        \draw (145) -- (245);
        \draw (146) -- (246);
        \draw (156) -- (256);

        \foreach \x in {1,2,3,4,5,6}{
        \foreach \y in {2,...,6}{ 
        \foreach \z in {3,...,6}{ 
        \ifthenelse{\x < \y}{
        \ifthenelse{\y < \z}{
        \begin{scope}[shift={(\x\y\z)},rotate=90]
            \filldraw[fill=white, thick] (0,0) circle (0.25);
            \foreach \w in {0,1,...,5}{\draw (0,0) -- ({\w*60}:0.25);}
            \filldraw[fill=gray!70] (0,0) -- ({-(\x-1)*360/6}:0.25) arc ({-(\x-1)*360/6}:{-\x*360/6}:0.25) -- cycle;
            \filldraw[fill=gray!70] (0,0) -- ({-(\y-1)*360/6}:0.25) arc ({-(\y-1)*360/6}:{-\y*360/6}:0.25) -- cycle;
            \filldraw[fill=gray!70] (0,0) -- ({-(\z-1)*360/6}:0.25) arc ({-(\z-1)*360/6}:{-\z*360/6}:0.25) -- cycle;
        \end{scope}
        }{}}{}
        }
        }
        }
        \begin{scope}[shift={(123)},rotate=90]
            \filldraw[fill=white, thick] (0,0) circle (0.25);
            \foreach \w in {0,1,...,5}{\draw (0,0) -- ({\w*60}:0.25);}
            \foreach \x in {1,2,3}{
            \filldraw[fill=red] (0,0) -- ({-(\x-1)*360/6}:0.25) arc ({-(\x-1)*360/6}:{-\x*360/6}:0.25) -- cycle;}
        \end{scope}
        \begin{scope}[shift={(456)},rotate=90]
            \filldraw[fill=white, thick] (0,0) circle (0.25);
            \foreach \w in {0,1,...,5}{\draw (0,0) -- ({\w*60}:0.25);}
            \foreach \x in {4,5,6}{
            \filldraw[fill=blue] (0,0) -- ({-(\x-1)*360/6}:0.25) arc ({-(\x-1)*360/6}:{-\x*360/6}:0.25) -- cycle;}
        \end{scope}
        \begin{scope}[shift={(234)},rotate=90]
            \filldraw[fill=white, thick] (0,0) circle (0.25);
            \foreach \w in {0,1,...,5}{\draw (0,0) -- ({\w*60}:0.25);}
            \foreach \x in {2,3,4}{
            \filldraw[fill=green!50!black] (0,0) -- ({-(\x-1)*360/6}:0.25) arc ({-(\x-1)*360/6}:{-\x*360/6}:0.25) -- cycle;}
        \end{scope}
        \begin{scope}[shift={(156)},rotate=90]
            \filldraw[fill=white, thick] (0,0) circle (0.25);
            \foreach \w in {0,1,...,5}{\draw (0,0) -- ({\w*60}:0.25);}
            \foreach \x in {1,5,6}{
            \filldraw[fill=orange] (0,0) -- ({-(\x-1)*360/6}:0.25) arc ({-(\x-1)*360/6}:{-\x*360/6}:0.25) -- cycle;}
        \end{scope}
    \end{scope}
    \begin{scope}[shift={(11,0)}]
        \draw[red, line width=2, opacity=0.7] ({120-1*60}:2.5) -- ({120-2*60}:2.5) -- ({120-3*60}:2.5) -- cycle;
        \draw[blue, line width=2, opacity=0.7] ({120-4*60}:2.5) -- ({120-5*60}:2.5) -- ({120-6*60}:2.5) -- cycle;
        \draw[green!50!black, line width=2, opacity=0.7] ({120-4*60}:2.5) -- ({120-2*60}:2.5) -- ({120-3*60}:2.5) -- cycle;
        \draw[orange, line width=2, opacity=0.7] ({120-1*60}:2.5) -- ({120-5*60}:2.5) -- ({120-6*60}:2.5) -- cycle;
        \foreach \x in {1,...,6}{
        \filldraw (\x*60:2.5) circle (3pt);
        \node at ({120-\x*60}:2.9) {\small $\x$};}
    \end{scope}
    \end{tikzpicture}
    \caption{Minimum total dominating set of $\Johnson{6}{3}$.}
    \label{fig: conj tdom min J(6,3)}
\end{figure}

It can be easily checked (either by a computer or case analysis) that $\tdom(J(6,3))=4$ and $\tdom(J(7,3))=5$. The minimum total dominating set $D=\{\{1,2,3\},\{2,3,4\},\{4,5,6\},$ $\{1,5,6\}\}$ of $J(6,3)$ is illustrated in Figure~\ref{fig: conj tdom min J(6,3)}. Moreover, the minimum total dominating set of $J(7,3)$ is depicted in Figure~\ref{fig: conj tdom min J(7,3)}. In both cases we presented total dominating sets by coverings by edge-overlapping triangles of a graph $G$ of order $n \in \{6,7\}$ with $\alpha(G)=2$.

\begin{figure}[ht]
    \centering
    \begin{tikzpicture}
        \begin{scope}
                \draw ({90-(1-1)*360/7}:2) -- ({90-(2-1)*360/7}:2) -- ({90-(3-1)*360/7}:2) -- cycle;
                \draw ({90-(1-1)*360/7}:2) -- ({90-(4-1)*360/7}:2) -- ({90-(2-1)*360/7}:2);
                \draw ({90-(4-1)*360/7}:2) -- ({90-(5-1)*360/7}:2) -- ({90-(3-1)*360/7}:2) -- cycle;
                \draw ({90-(5-1)*360/7}:2) -- ({90-(6-1)*360/7}:2) -- ({90-(7-1)*360/7}:2) -- cycle;
                \draw ({90-(6-1)*360/7}:2) -- ({90-(4-1)*360/7}:2);

                \draw[red, line width=2, opacity=0.5] ({90-(1-1)*360/7}:2) -- ({90-(2-1)*360/7}:2) -- ({90-(3-1)*360/7}:2) -- cycle;
                \draw[blue, line width=2, opacity=0.5] ({90-(1-1)*360/7}:2) -- ({90-(2-1)*360/7}:2) -- ({90-(4-1)*360/7}:2) -- cycle;
                \draw[green!50!black, line width=2, opacity=0.5] ({90-(3-1)*360/7}:2) -- ({90-(4-1)*360/7}:2) -- ({90-(5-1)*360/7}:2) -- cycle;
                \draw[orange, line width=2, opacity=0.5] ({90-(4-1)*360/7}:2) -- ({90-(5-1)*360/7}:2) -- ({90-(6-1)*360/7}:2) -- cycle;
                \draw[violet, line width=2, opacity=0.5] ({90-(5-1)*360/7}:2) -- ({90-(6-1)*360/7}:2) -- ({90-(7-1)*360/7}:2) -- cycle;
                
                \foreach \x in {1,...,7}{
                \filldraw ({90-(\x-1)*360/7}:2) circle (2.5pt);
                \node at ({90-(\x-1)*360/7}:2.35) {\small$\x$};
                }
            \end{scope}
    \end{tikzpicture}
    \caption{Graph $G^D$ obtained from total dominating set $D$ of $J(7,3)$ of cardinality 5.}
    \label{fig: conj tdom min J(7,3)}
    \end{figure}

\subsubsection{An upper bound for \texorpdfstring{$\tdom(\Johnson{n}{3})$}{tdom(J(n,3))}}

Let $G$ be a graph and $D=\{D_1,D_2,\ldots,D_\ell\}$ a collection of subgraphs of $G$, not necessarily induced, each one isomorphic to the \emph{diamond} graph $K_4-e$ (see Figure \ref{fig:diamante}).
\begin{figure}[ht]
    \centering
    \begin{tikzpicture}
        \draw (-1,0) -- (0,1) -- (1,0) -- (0,-1) -- cycle;
        \draw (-1,0) -- (1,0);
        \filldraw (0,1) circle (3pt);
        \filldraw (0,-1) circle (3pt);
        \filldraw (1,0) circle (3pt);
        \filldraw (-1,0) circle (3pt);
    \end{tikzpicture}
    \caption{Diamond graph ($K_4-e$).}
    \label{fig:diamante}
\end{figure}
We say that $D$ is an \textit{edge diamond covering} of $G$ if $E(G)=\bigcup_{i=1}^{\ell} E(D_i)$. We say that $D$ is an \textit{edge diamond packing} of $G$ if the edges of the graphs $D_i$ are pairwise disjoint.
The \textit{diamond covering number} of $G$, denoted $\text{cov}(G,\text{diam})$, is the minimum cardinality of an edge diamond covering of $G$, and the \textit{diamond packing number} of $G$, denoted $\text{pack}(G,\text{diam})$, is the maximum cardinality of an edge diamond packing of $G$.
If $D$ is an edge diamond covering that is also a packing, we say that $D$ is a \textit{diamond decomposition} of $G$. The study of edge diamond coverings and packings, as well as those by other graphs of order at most $4$, has been studied in complete graphs (see, for example, \cite{bermond1977g,hoffman1996maximum,RODITTY1983231}).

Note that if $D^*$ is an edge diamond covering of a graph $G$, then it is possible to obtain a CEOT $D$ of $G$ with cardinality $|D|=2|D^*|$ by considering, for each diamond in $D^*$, two triangles as in Figure~\ref{fig:triangulos a partir de diamante}.
\begin{figure}[ht]
    \centering
    \begin{tikzpicture}
        \draw (-1,0) -- (0,1) -- (1,0) -- (0,-1) -- cycle;
        \draw (-1,0) -- (1,0);
        \filldraw (0,1) circle (3pt) node[yshift=8] {$x_2$};
        \filldraw (0,-1) circle (3pt) node[yshift=-8] {$x_4$};
        \filldraw (1,0) circle (3pt) node[xshift=8] {$x_3$};
        \filldraw (-1,0) circle (3pt) node[xshift=-8] {$x_1$};
        \node at (0,-2) {$d$};

        \begin{scope}[shift={(4,0)}]
        \draw (-1,0) -- (0,1) -- (1,0) -- cycle;
        \filldraw (0,1) circle (3pt) node[yshift=8] {$x_2$};
        \filldraw (1,0) circle (3pt) node[xshift=8] {$x_3$};
        \filldraw (-1,0) circle (3pt) node[xshift=-8] {$x_1$};
        \node at (0,-2) {$T_1$};
        \end{scope}
        
        \begin{scope}[shift={(8,0)}]
        \draw (-1,0) -- (0,-1) -- (1,0) -- cycle;
        \filldraw (0,-1) circle (3pt) node[yshift=-8] {$x_4$};
        \filldraw (1,0) circle (3pt) node[xshift=8] {$x_3$};
        \filldraw (-1,0) circle (3pt) node[xshift=-8] {$x_1$};
        \node at (0,-2) {$T_2$};
        \end{scope}
    \end{tikzpicture}
    \caption{$T_1$ and $T_2$ triangles obtained from the diamond $d$.}
    \label{fig:triangulos a partir de diamante}
\end{figure}
If $D^*$ is a minimum edge diamond covering of $G$, we obtain
\begin{equation}\label{eq: cota CEOT con covering}
    \rho_{\Delta}^s(G)\leq |D|=2\,|D^*|= 2\,\text{cov}(G,\text{diam}).
\end{equation}
Now, let $m$ be a natural number with $4\leq m\leq n-4$, and let $G$ be the disjoint union of $K_m$ and $K_{n-m}$. Clearly $G$ is a graph of order $n$ and $\alpha(G)\leq 2$. Then, from an edge diamond covering of $G$, which will be an edge diamond covering of each of its connected components, we obtain a CEOT of this graph $G$ of order $n$ with $\alpha(G)\leq 2$, which by Proposition \ref{prop:covering-overlapping} is a total dominating set of $\Johnson{n}{3}$. Therefore
\begin{equation}\notag
    \tdom(\Johnson{n}{3})\leq \rho_{\Delta}^s(G)=\rho_{\Delta}^s(K_m)+\rho_{\Delta}^s(K_{n-m}).
\end{equation}
Given $n,m\in\N$ with $n\geq 8$ and $m\in[4..(n-4)]$, we define $f_n(m)$ as
\begin{equation}\label{eq: f_n(m)}
    f_n(m)=\rho_{\Delta}^s(K_m)+\rho_{\Delta}^s(K_{n-m}),
\end{equation}
and $\varphi_n$ as 
\begin{equation}\label{eq: varphi_n}
    \varphi_n=\min\left\{f_n(m):m\in [4..(n-4)]\right\}.
\end{equation}
Since the definition of $f_n(m)$ directly implies that $f_n(m)=f_n(n-m)$, it follows that for any $n \geq 8$, $\varphi_n=\min\left\{f_n(m):m\in [4..\left\lfloor\frac{n}{2}\right\rfloor]\right\}$.
Therefore, we have the following upper bound for the total domination number of $\Johnson{n}{3}$.

\begin{prop}\label{prop: cota sup tdom varphi_n}
    For $n\geq 8$, we have
    $$\tdom(\Johnson{n}{3})\leq \varphi_n.$$
\end{prop}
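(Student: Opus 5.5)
The plan is to realize the minimum in the definition of $\varphi_n$ by an explicit graph and then apply Proposition~\ref{prop:covering-overlapping}. Fix $n\geq 8$ and choose $m\in[4..(n-4)]$ with $f_n(m)=\varphi_n$; the minimum exists because the index set is finite and nonempty when $n\geq 8$. Let $G=K_m\cup K_{n-m}$ be the disjoint union, on vertex set $[n]$ split into two parts of sizes $m$ and $n-m$.

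\emph{Step 1: $\alpha(G)\leq 2$.} Among any three vertices of $G$, two lie in the same clique and are therefore adjacent. Hence no independent set has three vertices.

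\emph{Step 2: finiteness.} I next check that $\rho_\Delta^s(K_m)$ and $\rho_\Delta^s(K_{n-m})$ are finite. For $m\geq 4$, the graph $K_m$ admits an edge diamond covering: every edge $xy$ of $K_m$ lies in some diamond, obtained by choosing two further vertices. By the observation leading to~(\ref{eq: cota CEOT con covering}), $K_m$ then has a CEOT of size at most $2\,\mathrm{cov}(K_m,\mathrm{diam})<\infty$. The same argument applies to $K_{n-m}$, since $n-m\geq 4$.

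\emph{Step 3: a CEOT of $G$.} Take minimum CEOTs $\mathcal C_1$ of $K_m$ and $\mathcal C_2$ of $K_{n-m}$. Their union $\mathcal C=\mathcal C_1\cup\mathcal C_2$ is a CEOT of $G$, for two reasons.
\begin{itemize}
\item Every edge of $G$ lies inside one component, so it is covered by a triangle of the corresponding $\mathcal C_i$.
\item Every triangle of $\mathcal C_i$ has an overlapping partner already within $\mathcal C_i$.
\end{itemize}
The two families are disjoint because they live on disjoint vertex sets. Hence
\[
\rho_\Delta^s(G)\leq|\mathcal C|=\rho_\Delta^s(K_m)+\rho_\Delta^s(K_{n-m})=f_n(m)=\varphi_n.
\]

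\emph{Step 4: conclusion.} By Proposition~\ref{prop:covering-overlapping}, $\mathcal C$ is a total dominating set of $\Johnson{n}{3}$. Equivalently, $\tdom(\Johnson{n}{3})=t_n^s\leq\rho_\Delta^s(G)\leq\varphi_n$.

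I expect no real obstacle. The only point needing care is finiteness in Step 2, which is exactly what the restriction $4\leq m\leq n-4$ guarantees: each clique must have at least $4$ vertices to contain a diamond, and hence two overlapping triangles. This is also why the statement requires $n\geq 8$.
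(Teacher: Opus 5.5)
Your proof is correct and follows the paper's argument: take $G=K_m\cup K_{n-m}$ with $4\le m\le n-4$, note $\alpha(G)\le 2$, combine CEOTs of the two cliques into a CEOT of $G$, and invoke Proposition~\ref{prop:covering-overlapping}. Two things you make explicit that the paper leaves implicit are worthwhile: the finiteness of $\rho_\Delta^s(K_m)$ for $m\ge 4$ (via a diamond covering), and the fact that the inequality $\rho_\Delta^s(G)\le f_n(m)$ suffices, whereas the paper states an equality.
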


We begin by determining $\rho_{\Delta}^s(K_n)$, which will allow us to determine $\varphi_n$. The following result is due to \cite{RODITTY1983231}.

\begin{theorem}[\cite{RODITTY1983231}]\label{thm: cov-diam}
For $n\geq 6$, the following hold:
    \begin{enumerate}
        \item $\text{cov}(K_n,\text{diam})=\left\lceil\dfrac{n(n-1)}{10}\right\rceil.$
        \item $\text{pack}(K_n,\text{diam})=\left\lfloor\dfrac{n(n-1)}{10}\right\rfloor.$
    \end{enumerate}
\end{theorem}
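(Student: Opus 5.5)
Since this is a cited result from \cite{RODITTY1983231}, we would reproduce it in the standard design-theoretic way. The argument splits into trivial counting bounds plus matching constructions organised by the residue of $n$ modulo $5$ (refined modulo $10$ where the parity of $n-1$ matters).

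\emph{Counting bounds.} A diamond has exactly $5$ edges and $K_n$ has $\binom{n}{2}=\frac{n(n-1)}{2}$ edges.
\begin{itemize}
\item Any edge diamond covering uses at least $\frac{n(n-1)}{10}$ diamonds. Since the number of diamonds is an integer, $\text{cov}(K_n,\text{diam})\geq\lceil n(n-1)/10\rceil$.
\item The diamonds of a packing use disjoint edge sets. Hence $\text{pack}(K_n,\text{diam})\leq\lfloor n(n-1)/10\rfloor$.
\end{itemize}
The whole content is therefore in the constructions: a covering of size $\lceil n(n-1)/10\rceil$ and a packing of size $\lfloor n(n-1)/10\rfloor$, for every $n\geq 6$.

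\emph{Exact residues.} When $n\equiv 0,1 \pmod 5$ we have $10\mid n(n-1)$. Both statements then follow from a diamond decomposition of $K_n$, which is the Bermond--Schönheim theorem (cf.\ \cite{bermond1977g}). We would prove it with the usual toolkit:
\begin{itemize}
\item small base decompositions, for instance a cyclic decomposition of $K_{11}$ by developing base diamonds in $\mathbb{Z}_{11}$, together with decompositions of $K_5$-type pieces such as $K_6$ and $K_{10}$;
\item decompositions of complete bipartite and multipartite graphs $K_{5,5}$, $K_{5,5,5}$ and similar into diamonds;
\item a recursive step $n\to n+10$ (or $n\to 5m$, $5m+1$ via a group divisible design / Wilson-type construction) that glues these pieces together.
\end{itemize}

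\emph{Remaining residues.} For $n\equiv 2,3,4\pmod 5$ write $\frac{n(n-1)}{2}=5q+r$ with $r\in\{1,3\}$; note that $r=1$ for $n\equiv 2,4$ and $r=3$ for $n\equiv 3$.
\begin{itemize}
\item For the packing we want $q$ edge-disjoint diamonds whose leave has exactly $r$ edges.
\item For the covering we want $q+1$ diamonds whose excess (edges covered twice) has exactly $5-r$ edges.
\end{itemize}
The plan is to split $K_n=K_{n'}\cup K_{s}\cup K_{n',s}$ with $n'\equiv 0,1\pmod 5$ and $s$ small. We decompose $K_{n'}$ by the previous step and handle $K_s\cup K_{n',s}$ by an explicit construction. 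This amounts to decomposing $K_{s+1}$ or $K_{s+5}$ minus a small graph, plus a decomposition of $K_{5t,s}$-type bipartite pieces into diamonds with controlled leave or padding.

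The main obstacle is this last stage: producing, for each residue class, a small base configuration achieving exactly the optimal leave or excess, and one that is compatible with the recursion. The small cases $n=6,7,8,9,12,13,14$ cannot be reached by the recursion and must be exhibited by hand. We would first try cyclic or $1$-rotational constructions on $\mathbb{Z}_{n-1}\cup\{\infty\}$ for them, and fall back on direct listing when that fails.
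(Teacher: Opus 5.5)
The paper gives no proof of this statement; it quotes it from \cite{RODITTY1983231}. Your two counting inequalities and your bookkeeping of $\binom{n}{2}\bmod 5$ are correct. The genuine gap is that part~2 cannot be proved as stated: it is false for $n=7$ and $n=9$, which are two of the small cases you plan to ``exhibit by hand''.

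For $n=7$, call the two degree-$3$ vertices of a diamond its spine vertices and the other two its tips. Every edge of a diamond has a spine endpoint. Four edge-disjoint diamonds in $K_7$ would cover $K_7-ab$ for some edge $ab$. Let $x_v$ and $y_v$ count the diamonds in which $v$ is a spine vertex, resp.\ a tip. Then $3x_v+2y_v$ equals $5$ for $v\in\{a,b\}$ and $6$ otherwise, while $\sum_v x_v=\sum_v y_v=8$. This forces $(x_a,y_a)=(x_b,y_b)=(1,1)$, three vertices with $(2,0)$, and two vertices $w,z$ with $(0,3)$. The edge $wz\neq ab$ then lies in no diamond, a contradiction. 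Hence $\text{pack}(K_7,\text{diam})=3<4=\lfloor 42/10\rfloor$.

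For $n=9$ the same count forces one vertex to be a tip in four diamonds. A longer analysis, or directly the leave sizes in Remark~\ref{obs: leave packing diamond} taken from \cite{hoffman1996maximum}, gives $\text{pack}(K_9,\text{diam})=6<7$. That Remark in fact contradicts part~2 for $n\in\{7,9\}$, which is why the proof of Proposition~\ref{prop:CEOT_complete} treats these two values separately.

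So part~2 has to exclude $n\in\{7,9\}$, where the true values are $3$ and $6$. Part~1 does survive at $n=7,9$, but not by your route. Adding one diamond through the single leave edge of an optimal packing is unavailable there, because the leave has six edges, so direct constructions are needed. Write $[s\,s'\mid t\,t']$ for the diamond with spine vertices $s,s'$ and tips $t,t'$. The five diamonds $[0\,1\mid 2\,3]$, $[2\,3\mid 4\,5]$, $[4\,5\mid 6\,0]$, $[1\,6\mid 4\,5]$, $[6\,0\mid 2\,3]$ cover $K_7$. The eight diamonds $[0\,1\mid 2\,3]$, $[2\,3\mid 4\,5]$, $[4\,5\mid 6\,0]$, $[7\,6\mid 0\,1]$, $[8\,6\mid 2\,3]$, $[8\,1\mid 4\,5]$, $[7\,8\mid 0\,2]$, $[7\,4\mid 3\,5]$ cover $K_9$.

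Finally, beyond the counting bounds your proposal describes a toolkit rather than carrying it out. No base decomposition, recursive gluing step, or small-case configuration is actually given. So even in the cases where the statement is true, the existence half is still missing.
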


\begin{remark}\label{obs: leave packing diamond}
    Let $n\geq 6$ and let $S$ be a maximum edge diamond packing of $K_n$, and $P(S)$ the set of edges of $K_n$ not covered by any triangle of $S$.
    In \cite{hoffman1996maximum} it is established that 
    $$|P(S)|=\begin{cases}
        0, &\text{if }n\equiv 0,1 \mod 5,\\
        1, &\text{if }n\equiv 2,4 \mod 5 \text{ and }n\notin \{7,9\},\\
        3, &\text{if }n\equiv 3 \mod 5,\\
        6, &\text{if }n\in \{7,9\}.
    \end{cases}$$
    Moreover, if $n\equiv 3 \mod 5$, there exists a maximum packing $S$ for which $P(S)=\{ab,bc,cd\}$, for $a,b,c,d\in [n]$. 
\end{remark}

Let $n\in\N$ with $n\geq 4$, and let $D^*$ be an edge diamond covering of $K_n$. Then
$|D^*|\geq \text{cov}(K_n,\text{diam}) \geq \frac{n(n-1)}{10}=\frac{|E(K_n)|}{5}.$
Now, let $D$ be the CEOT obtained from $D^*$, so that $|D|=2|D^*|$. It follows that
$$|E(K_n)|\leq 5|D^*|=\frac{5}{2}\,|D|.$$
In the following lemma, we show that this inequality holds for every CEOT $D$ of a graph $G$.

\begin{lemma}\label{lem: edges CEOT}
    Given a graph $G$, if $D$ is a CEOT of $G$, then $|E(G)|\leq \frac{5}{2}|D|$. 
\end{lemma}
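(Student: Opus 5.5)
The plan is to exploit the overlap condition by grouping the triangles of $D$ into clusters in which every triangle after the first shares an edge with an earlier one. Each such triangle then contributes at most two new edges, and the per-triangle average drops from $3$ to at most $\frac{5}{2}$.

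\textbf{Overlap graph.} Define an auxiliary graph $H$ with vertex set $D$, in which two triangles $T,S\in D$ are adjacent when they overlap, i.e.\ $|T\cap S|=2$. Overlapping triangles share exactly one edge of $G$. Since $D$ is a CEOT, every $T\in D$ overlaps some $S\in D$, so $H$ has no isolated vertices. Hence every connected component of $H$ has at least two vertices. Let $D=C_1\cup\dots\cup C_p$ be the partition of $D$ into the vertex sets of these components, with $s_i=|C_i|\geq 2$.

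\textbf{Counting edges per component.} Since $D$ is an edge triangle covering, $E(G)=\bigcup_{T\in D}E(T)$. Hence
$$|E(G)|\leq \sum_{i=1}^{p}\Big|\bigcup_{T\in C_i}E(T)\Big|.$$
Fix a component $C_i$ and order its triangles $T_1,\dots,T_{s_i}$ along a breadth-first search in $H$, so that each $T_j$ with $j\geq 2$ overlaps some $T_{j'}$ with $j'<j$. The first triangle contributes $3$ edges. Each later $T_j$ shares an edge with an earlier triangle, so it contributes at most $2$ edges not already counted. Therefore
$$\Big|\bigcup_{T\in C_i}E(T)\Big|\leq 3+2(s_i-1)=2s_i+1\leq \tfrac{5}{2}s_i,$$
where the last inequality is equivalent to $s_i\geq 2$, which holds for every component.

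\textbf{Conclusion.} Summing over the components gives
$$|E(G)|\leq \tfrac{5}{2}\sum_{i=1}^{p} s_i=\tfrac{5}{2}|D|.$$
The only delicate point is making sure every component of $H$ has at least two triangles, so that the slack in $2s_i+1\leq \frac{5}{2}s_i$ is available. This is exactly where the overlap requirement in the definition of a CEOT is used. Equality holds only when every component consists of two overlapping triangles whose union has $5$ distinct edges, which is the diamond structure behind~\eqref{eq: cota CEOT con covering}.
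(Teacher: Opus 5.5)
Your proof is correct, but it follows a different route from the paper's.

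The paper uses a global double count. It builds the bipartite incidence graph between $D$ and the set $E'$ of edges lying in at least two triangles of $D$. It observes that each triangle has degree at least $1$ there (by the overlap condition) and each $e\in E'$ has degree at least $2$. From this it deduces $\sum_{e\in E'}(\deg_H(e)-1)\geq |D|/2$, and substitutes this into $3|D|=|E(G)|+\sum_{e\in E'}(\deg_H(e)-1)$. That argument is purely local: it needs no connectivity or ordering. It is also the same incidence-counting template the paper uses for the lower bound on $\gamma_t(J(n,2))$.

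Your approach, via components of the overlap graph and a BFS ordering within each component, yields somewhat more. Summing your per-component bound $2s_i+1$ gives $|E(G)|\leq 2|D|+p$, where $p\leq |D|/2$ is the number of components. This is strictly stronger than the lemma whenever some component contains three or more triangles. Moreover, two distinct triangles sharing an edge automatically overlap, so triangles in different components are edge-disjoint. Hence your first inequality is actually an equality, and your equality remark upgrades to an ``if and only if'': $|E(G)|=\frac{5}{2}|D|$ exactly when every component consists of two triangles forming a diamond. That is precisely the structure of the diamond-based construction the paper uses for its upper bound.
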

\begin{proof}
    Let $D$ be a CEOT of $G$, and $E'$ the set of edges of $G$ that are in more than one triangle of $D$.
    We define a bipartite graph $H$ with $V(H)=D\cup E'$, where for $t\in D$ and $e\in E'$, $t$ and $e$ are adjacent in $H$ if and only if $e$ is an edge in $t$. 
    Observe that by the definition of $E'$, $\text{deg}_H(e)\geq 2$ for each $e\in E'$. Since $D$ is a CEOT, $\text{deg}_H(t)\geq 1$ for each $t\in D$. Moreover, since $H$ is bipartite with bipartition $D\cup E'$, we have $\sum_{t\in D}\text{deg}_H(t)=\sum_{e\in E'}\text{deg}_H(e)$. Then, we have
    \begin{equation}\notag
    \begin{aligned}
        2\left(\sum_{e\in E'}\text{deg}_H(e)\right)=\sum_{t\in D}\overbrace{\text{deg}_H(t)}^{\geq 1}+\sum_{e\in E'}\overbrace{\text{deg}_H(e)}^{\geq 2}\geq |D|+2|E'|. 
    \end{aligned}
    \end{equation}

    On the other hand, if $\hat{E}$ is the set of edges appearing in exactly one triangle, we have
    that
    \begin{equation}\notag
    \begin{aligned}
        3|D|&=\sum_{d\in D} |E(d)|= |\hat{E}|+\sum_{e\in E'}\text{deg}_H(e)= \overbrace{|\hat{E}|+|E'|}^{=|E(G)|}+\sum_{e\in E'}(\text{deg}_H(e)-1)\\
        &= |E(G)|+\sum_{e\in E'}(\text{deg}_H(e)-1)=|E(G)|+\underbrace{\left(\sum_{e\in E'}\text{deg}_H(e)-|E'|\right)}_{\geq \frac{|D|}{2}}\geq |E(G)|+\frac{|D|}{2}
    \end{aligned}
    \end{equation}
    Therefore, 
    $$|E(G)|\leq 3|D|-\frac{|D|}{2}=\frac{5}{2}|D|.$$
\end{proof}

The above results can be used to derive upper bounds for $J\tdom((n,3))$. For example, Proposition~\ref{prop: cota sup tdom varphi_n} implies that $\tdom(\Johnson{10}{3})\leq \allowbreak\varphi_{10}=\min\{f_{10}(m):m\in \{4,5,6\}\}$. Since it can be easily seen that $\rho_{\Delta}^s(K_4)=3$, $\rho_{\Delta}^s(K_5)=5$ and $\rho_{\Delta}^s(K_6)=6$, we obtain $f_{10}(4)=f_{10}(6)=\rho_{\Delta}^s(K_4)+\rho_{\Delta}^s(K_6)=3+6=9$ and $f_{10}(5)=2\rho_{\Delta}^s(K_5)=10$. Hence $\tdom(\Johnson{10}{3})\leq \varphi_{10}=9$.

Next, we proceed as follows. First, we establish the value of $\rho_{\Delta}^s(K_n)$ for each $n\geq 4$. Then, we determine $\varphi_n$ for each $n\geq 8$, as well as the values of $m$ that minimize (\ref{eq: f_n(m)}).

\begin{prop}\label{prop:CEOT_complete}
    For $n\geq 6$, we have
    $$\rho_{\Delta}^s(K_n)=\left\lceil\frac{n(n-1)}{5}\right\rceil.$$
\end{prop}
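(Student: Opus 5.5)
The lower bound is immediate from Lemma~\ref{lem: edges CEOT}. Any CEOT $D$ of $K_n$ satisfies $\binom{n}{2}\le \frac52|D|$, so $|D|\ge \frac{n(n-1)}{5}$, and integrality gives $\rho_{\Delta}^s(K_n)\ge\lceil n(n-1)/5\rceil$. The substance is therefore the matching construction. The natural tool is the diamond covering of Theorem~\ref{thm: cov-diam} via \eqref{eq: cota CEOT con covering}. However, $2\lceil n(n-1)/10\rceil$ exceeds $\lceil n(n-1)/5\rceil$ by one whenever $n(n-1)/10$ is not an integer and its fractional part is at most $1/2$. So instead I will start from a maximum diamond packing as in Remark~\ref{obs: leave packing diamond} and patch the leave $P(S)$ with few extra triangles.

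I split by $n \bmod 5$. Since $n(n-1)\equiv 0,2,1,2 \pmod 5$ for $n\equiv 0\text{ or }1,\ 2,\ 3,\ 4 \pmod 5$ respectively, the targets are as follows.

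\textbf{Case $n\equiv 0,1$.} The packing is a decomposition into $n(n-1)/10$ diamonds. Replacing each diamond by its two overlapping triangles gives a CEOT of size exactly $n(n-1)/5$.

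\textbf{Case $n\equiv 2,4$ with $n\notin\{7,9\}$.} Here $n(n-1)=5q+2$, so the target is $q+1$. The packing has $(n(n-1)/2-1)/5=q/2$ diamonds, giving $q$ triangles, and the leave is a single edge $ab$. I add one triangle $abc$ for any $c$. This triangle overlaps any triangle of the packing containing $ac$ (or $bc$), and such a triangle exists because $ac$ is covered. The total is $q+1$.

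\textbf{Case $n\equiv 3$.} Here $n(n-1)=5q+1$, the target is again $q+1$, and the packing has $(n(n-1)/2-3)/5=(q-1)/2$ diamonds, giving $q-1$ triangles, with leave the path $ab,bc,cd$. I add the two triangles $abc$ and $bcd$. They overlap each other and together cover the whole path, for a total of $q+1$.

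\textbf{Case $n\in\{7,9\}$.} The leave has $6$ edges, so this generic patch is too expensive. The targets are $\lceil 42/5\rceil=9$ and $\lceil 72/5\rceil=15$. The packing gives $2\cdot 3=6$ and $2\cdot 6=12$ triangles, so only $3$ more triangles may cover $6$ leftover edges. That forces the three triangles to be nearly edge-disjoint, each still overlapping something.

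Here I would either construct explicit CEOTs of sizes $9$ and $15$ directly, or use the known structure of the leave. For $n=7$ the leave of an optimal packing of $K_7$ (with $21=3\cdot5+6$ edges) can be chosen to be a triangle-like configuration, e.g.\ two triangles or a $6$-cycle. If it is two edge-disjoint triangles, adding them plus one more gives $9$, and overlap follows because each of their vertex pairs also lies in some diamond triangle. I expect these two small cases to be the main obstacle. If the structural route fails, I would give explicit lists of $9$ triples on $[7]$ and $15$ triples on $[9]$ covering all pairs with every triple overlapping another, verified by hand (e.g.\ for $n=7$, start from the Fano plane's $7$ lines covering all pairs exactly once, and add $2$ triples, each overlapping lines appropriately, which need not cover anything new; but every Fano line must then overlap one of the added triples, and $2$ triples overlap at most $6$ lines, so $3$ additions would be needed; hence use a non-Fano design instead). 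The case $n=6$ falls under $n\equiv1$ and is covered by the decomposition.
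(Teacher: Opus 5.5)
Your lower bound and your constructions for every $n\ge 6$ with $n\notin\{7,9\}$ are correct and follow the paper's route. For $n\equiv 3 \pmod 5$ the paper simply doubles a minimum diamond covering, since $2\lceil n(n-1)/10\rceil=\lceil n(n-1)/5\rceil$ there. Your alternative, a maximum packing patched with $\{a,b,c\},\{b,c,d\}$, works equally well.

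The genuine gap is $n\in\{7,9\}$. You never exhibit CEOTs of sizes $9$ and $15$, and the structural route you sketch fails for two reasons.

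First, a triangle all of whose edges lie in the leave $P(S)$ is contained in no packing triangle. So it overlaps no triangle coming from a diamond, and your claim that its vertex pairs ``also lie in some diamond triangle'' is false by the definition of the leave.

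Second, the leaves you suggest for $K_7$ cannot occur. Three diamonds provide only six vertex-slots of diamond-degree $2$. A vertex of leave-degree $2$ has packing degree $4=2+2$ and so uses two such slots. Hence a $6$-cycle or two vertex-disjoint triangles would need $12$ slots, and a bowtie would need $9$.

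The paper settles $n=7,9$ by listing explicit CEOTs with $9$ and $15$ triangles. You could instead close these cases with the results you already invoke:

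For $n=7$, take a diamond decomposition of $K_6$ on $[6]$, giving six triangles, and add $\{1,2,7\},\{3,4,7\},\{5,6,7\}$. Each added triangle overlaps the $K_6$-triangle through its edge inside $[6]$, for a total of $9$.

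For $n=9$, take a maximum diamond packing of $K_8$ on $[8]$ with leave $ab,bc,cd$, as in Remark~\ref{obs: leave packing diamond}. This gives five diamonds, hence ten triangles. Add $\{9,a,b\},\{9,b,c\},\{9,c,d\},\{9,e,f\},\{9,g,h\}$, where $\{e,f,g,h\}=[8]\setminus\{a,b,c,d\}$.
\begin{itemize}
\item These five triangles cover the leave and all pairs through $9$.
\item $\{9,b,c\}$ overlaps $\{9,a,b\}$ and $\{9,c,d\}$.
\item $\{9,e,f\}$ and $\{9,g,h\}$ overlap the packing triangles through $ef$ and $gh$.
\end{itemize}
This gives $15$ triangles in total.

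Without some such construction, the upper bound, and hence the proposition, remains unproved for $n\in\{7,9\}$.
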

\begin{proof}
    Let $n\geq 6$. Consider the graph $K_n$, and let $D$ be a CEOT of $K_n$ with $|D|=\rho_\Delta^s(K_n)$.
    By Lemma~\ref{lem: edges CEOT}, we have
    $$\rho_{\Delta}^s(K_n)=|D|\geq \frac{2}{5}|E(K_n)|=\frac{n(n-1)}{5}.$$
    Therefore,
    \begin{equation}\label{eq: cota inf CEOT minimo}
        \rho_\Delta^s(K_n) \geq \left\lceil\frac{n(n-1)}{5}\right\rceil.
    \end{equation}

    On the other hand let $D$ be a minimum edge diamond covering of $K_n$. By Theorem~\ref{thm: cov-diam}, $$|D|=\text{cov}(K_n,\text{diam}) = \left\lceil\frac{n(n-1)}{10}\right\rceil.$$ Let $D'$ be the CEOT obtained from $D$ with $|D'|=2|D|=2\left\lceil\frac{n(n-1)}{10}\right\rceil.$ A simple calculation implies that for any $n=5k+\ell$, where $\ell \in \{0,1,3\}$, it holds that $2\left\lceil\frac{n(n-1)}{10}\right\rceil =\left\lceil\frac{n(n-1)}{5}\right\rceil$. Thus, if $n \equiv\, \ell \, (\text{mod }5)$ for $\ell \in \{0,1,3\}$, then $\rho_\Delta^s(K_n) \leq |D'| =\left\lceil\frac{n(n-1)}{5}\right\rceil$.

   Now let $n \in \{5k+2,5k+4\}$ for some $k \in \mathbb{N}$ and let $D$ be a maximum edge diamond packing of $K_n$ and $P$ the set of edges of $K_n$ not covered by $D$. By Theorem~\ref{thm: cov-diam}
    $$|D|=\text{pack}(K_n,\text{diam})=\left\lfloor\frac{n(n-1)}{10}\right\rfloor$$
    and by Observation~\ref{obs: leave packing diamond} $|P|=1$ if $n \notin \{7,9\}$ and $|P|=6$ if $n \in \{7,9\}$.

    First let $n \in \{5k+2,5k+4\}$ for some $k \geq 2$. Then $|P|=1$ and thus $D$ covers all edges of $K_n$ except one, say $ab \in E(K_n)$. Then CEOT $D'$ of $K_n$ can be obtained from $D$ by taking two triangles from each diamond in $D$ and one more triangle $\{a,b,c\}$ for any $c\in V(K_n)\setminus \{a,b\}$. Note that also triangle $\{a,b,c\}$ is overlapping as there exists triangle $T \in D'$ that contains the edge $ac$. Hence
    $$\rho_\Delta^s(K_n) \leq 2|D|+1=2\left\lfloor\frac{n(n-1)}{10}\right\rfloor+1=\left\lceil\frac{n(n-1)}{5}\right\rceil,$$ since $n \in \{5k+2,5k+4:\, k \geq 2\}$.

    For $n=7$, there exists CEOT 
    \begin{equation}\notag
            \begin{aligned}
                D'=\big\{&\{1,2,3\},\{1,2,4\},\{1,5,6\},\{1,5,7\},\{3,4,5\},\\
                &\{3,4,6\},\{2,3,7\},\{4,6,7\},\{2,5,6\}\big\}
            \end{aligned}
        \end{equation}
    of cardinality $|D'|=9=\left\lceil\frac{7\cdot 6}{5}\right\rceil$ and for $n=9$ there exists CEOT
    \begin{equation}\notag
            \begin{aligned}
                D'=\big\{&\{1,2,3\},\{1,2,4\},\{1,5,6\},\{1,5,7\},\{2,6,7\},\{2,6,8\},\{3,4,6\},\{3,6,9\},\\
                &\{3,5,8\},\{3,7,8\},\{2,5,9\},\{4,5,9\},\{1,8,9\},\{4,7,8\},\{7,8,9\}\big\}
            \end{aligned}
        \end{equation}
    of cardinality $|D'|=15=\left\lceil\frac{9\cdot 8}{5}\right\rceil$.

   In any case, it is possible to obtain a CEOT of $K_n$ of cardinality $\left\lceil\frac{n(n-1)}{5}\right\rceil$.
\end{proof}

Proposition~\ref{prop:CEOT_complete} implies that for $n\geq 6$
\begin{equation}
    \rho_{\Delta}^s(K_n)=\left\lceil\frac{n(n-1)}{5}\right\rceil=\frac{n^2-n+a(n)}{5},
\end{equation}
where $a(n)=5\left(\left\lceil\frac{n(n-1)}{5}\right\rceil-\frac{n(n-1)}{5}\right)$. Moreover, for $n\in\{4,5\}$ it is not hard to prove that
\begin{equation}\notag
\begin{aligned}
    \rho_{\Delta}^s(K_4)&=3=\left\lceil\frac{4(4-1)}{5}\right\rceil=\frac{4^2-4+3}{5},\\
    \rho_{\Delta}^s(K_5)&=5=\left\lceil\frac{5(5-1)}{5}\right\rceil+1=\frac{5^2-5+5}{5}.
\end{aligned}
\end{equation}
That is, for $n\geq 4$, we obtain
$$\rho_{\Delta}^s(K_n)=\frac{n^2-n+a(n)}{5},$$
where $$a(n)=\begin{cases}
    0, &\text{if }n\neq 5\text{ and }n\equiv 0,1 \mod 5,\\
    3, &\text{if }n\equiv 2,4 \mod 5,\\
    4, &\text{if }n\equiv 3 \mod 5,\\
    5, &\text{if }n=5.
\end{cases}$$

In the following result, we determine the value of $\varphi_n$ for all $n\geq 8$.

\begin{prop}\label{prop: varphi_n}
    For $n\geq 8$, we have
    $$\varphi_n= \begin{cases}
    \left\lceil\dfrac{n^2-2n}{10}\right\rceil, &\text{if }n\equiv 0,1,2,3,9\mod 10 \text{ and }n\geq 12,\\[10pt]
    \left\lceil\dfrac{n^2-2n}{10}\right\rceil+1, &\text{if }n\equiv 4,5,6,7,8\mod 10 \text{ or }n\in\{9,10,11\}.
\end{cases}$$
\end{prop}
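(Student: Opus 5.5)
Since $\varphi_n$ is the minimum of $f_n(m)=\rho_{\Delta}^s(K_m)+\rho_{\Delta}^s(K_{n-m})$ over $m\in[4..\lfloor n/2\rfloor]$, I will use the closed form $\rho_{\Delta}^s(K_j)=\frac{j^2-j+a(j)}{5}$ valid for all $j\ge 4$. Writing $m'=n-m$, this gives
\[
f_n(m)=\frac{m^2+m'^2-n+a(m)+a(m')}{5}=\frac{n^2-2n}{10}+\frac{(m'-m)^2}{10}+\frac{a(m)+a(m')}{5},
\]
using $m^2+m'^2=\frac{n^2+(m'-m)^2}{2}$. Setting $d=m'-m\ge 0$ (with $d\equiv n \bmod 2$), minimizing $f_n$ amounts to minimizing the integer quantity $g(m)=d^2+2(a(m)+a(m'))$ and comparing $\frac{n^2-2n+g}{10}$ with $\lceil\frac{n^2-2n}{10}\rceil$. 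Because $f_n(m)$ is an integer, $\varphi_n\ge\lceil\frac{n^2-2n}{10}\rceil$ automatically. The whole proof therefore reduces to a residue analysis: decide when some $m$ achieves $f_n(m)=\lceil\frac{n^2-2n}{10}\rceil$, and otherwise show that one extra unit is always achievable and is forced.

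The key step is to determine, for each residue $n \bmod 10$, the value $r_n=10\lceil\frac{n^2-2n}{10}\rceil-(n^2-2n)\in\{0,2,\dots\}$. For example, $r\in\{0,7,\dots\}$ and so on computed from $n^2-2n \bmod 10$: residues $0,1,2,3,4,5,6,7,8,9$ give $n^2-2n\equiv 0,9,0,3,8,5,4,5,8,3$, so $r_n=0,1,0,7,2,5,6,5,2,7$. Equality $\varphi_n=\lceil\frac{n^2-2n}{10}\rceil$ holds iff there exists $m$ with $g(m)=r_n$. Since $a\in\{0,3,4,5\}$, $2(a(m)+a(m'))\in\{0,6,8,\dots\}$, so $g=r_n$ requires small $d$ and specific values of $a$.
\begin{itemize}
\item For $r_n=0$ we need $d=0$ and $a(n/2)=0$. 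In case $n\equiv0$, $n/2\equiv0,5\pmod 5$, which works. In case $n\equiv2$, $n/2\equiv1$ or $6$, which is fine.
\item For $r_n=1$ ($n\equiv1$), we take $d=1$, $a=0$: $m=(n-1)/2$ and $m'=(n+1)/2$ are consecutive, and the two must have residues $\{0,1\}\pmod 5$, which holds since $m\equiv0 \pmod 5$.
\item For $r_n=7$ ($n\equiv3,9$), either $d=1$ with $a$-sum $3$ (impossible, since the values are $0$ or $\ge3$, giving $2a$-sum in $\{0,6,8,\dots\}$; $1+6=7$ works with $a(m)+a(m')=3$, i.e. one value $3$ and one $0$), or $d$ larger. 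I will check consecutive residues: for $n\equiv3$, $(m,m')\equiv(1,2)$ or $(6,7)\pmod{10}$, giving $a=0,3$, which works. For $n\equiv9$, $m\equiv4 \pmod{10}$ and $m'\equiv5$, so $a(4)=3$ and $a(m')=0$ when $m'\ne5$, which works iff $m'\neq5$; this excludes $n=9$.
\item For $r_n\in\{2,5,6\}$ ($n\equiv4,5,6,7,8$) no combination of $d^2\in\{0,1,4,9,\dots\}$ with $2a$-sum in $\{0,6,8,10,\dots\}$ yields $r_n$ while respecting parity. This gives the lower bound $+1$.
\end{itemize}

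Next, the upper bound $\varphi_n\le\lceil\frac{n^2-2n}{10}\rceil+1$ needs $g=r_n+10$ achievable for these residues and for $n\in\{9,10,11\}$. I will exhibit an explicit $m$ in each case (e.g. $d=2$ or $d=4$ with suitable $a$). For instance, for $n\equiv4$, $r+10=12$, and $d=0$ with $a$-sum $6$ gives $m=n/2\equiv2,7$, $a=3$. For $n\equiv5$, $15=1+14$ requires $a$-sum $7=3+4$, where $m\equiv2$ and $m'\equiv3$ fit. The cases $n\equiv6,7,8$ are handled similarly, and the small cases $n=9,10,11$ are checked directly, since there $m\ge4$ restricts the choices and $a(5)=5$ interferes. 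The main obstacle is this bookkeeping: one must make sure that the minimizing $m$ stays in $[4..\lfloor n/2\rfloor]$, that the special value $a(5)=5$ is accounted for, and that impossibility is argued exhaustively over $d\le\sqrt{r_n}$. That last point makes the exhaustive check finite and routine.
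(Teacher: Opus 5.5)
Your reduction is correct, and it is the paper's computation in different coordinates. With $\epsilon=n \bmod 2$, your $d$ is the paper's $2t+\epsilon$, and your $g=d^2+2(a(m)+a(n-m))$ equals $2g_n(t)+\epsilon$. The difference is in how the case analysis is organized:
\begin{itemize}
\item The paper bounds $t$ and tabulates the true minimum of $g_n(t)$ in every residue class.
\item You use integrality of $f_n(m)$ to get $g\equiv r_n \pmod{10}$. The lower bound then becomes a finite check of whether $g=r_n$ is attainable (only $d\le 2$ matters, since $r_n\le 7$). The upper bound needs just one witness with $g=r_n+10$.
\end{itemize}
Your version is leaner and lands directly on the ceiling form. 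In exchange, the paper's table lists all minimizers (Remark~\ref{obs: m-min-f_n(m)}). Your witnesses (the balanced split for $n\ge 11$, and $m=4$ for $n=10$) already supply what Theorem~\ref{teo: tdom J(n,3)} later uses.

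One step fails as written. For $r_n\in\{2,5,6\}$ you argue that no $d^2\in\{0,1,4,9,\dots\}$ plus $2(a(m)+a(m'))\in\{0,6,8,\dots\}$, with the right parity, gives $r_n$. But for $n\equiv 6 \pmod{10}$ we have $r_n=6=0+6$ with $d=0$ even, so that reasoning excludes nothing. The missing observation is that $d=0$ forces $m=m'=n/2$, so $2(a(m)+a(m'))=4a(n/2)\in\{0,12,16,20\}$. Here $n/2\equiv 3 \pmod 5$, so in fact $a(n/2)=4$ and $g=16$. Meanwhile $d=2$ would require $2(a(m)+a(m'))=2$, which is impossible. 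With that line the case closes, and $g=16=r_n+10$ is also your witness.

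There are a few smaller slips:
\begin{itemize}
\item For $n\equiv 9$ you want $m=(n-1)/2\equiv 4 \pmod 5$, not $\pmod{10}$.
\item For $n\equiv 0$ you mean $n/2\equiv 0 \pmod 5$.
\item Your $r_n\in\{0,1\}$ bullets implicitly need $n/2\neq 5$ and $(n-1)/2\neq 5$; this is exactly why $n=10,11$ are exceptional. Deferring them to the direct check is fine: with $m\in\{4,5\}$ and $a(5)=5$, the minimum is $g=10$ at $m=4$ for $n=10$, and $g=11$ at $m=5$ for $n=11$.
\end{itemize}
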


\begin{proof}
Let $n \geq 8$ and let $m\in \left[4..\left\lfloor\frac{n}{2}\right\rfloor\right]$. 
Then
\begin{equation}\label{eq: f_n(m)2}
\begin{aligned}
    f_n(m)&=\frac{m^2-m+a(m)}{5}+\frac{(n-m)^2-(n-m)+a(n-m)}{5}\\
    &=\frac{m^2-m+n^2-2nm+m^2-n+m}{5}+\frac{a(m)+a(n-m)}{5}\\
    &=\frac{n^2-n}{5}+\frac{2}{5}m(m-n)+\frac{a(m)+a(n-m)}{5}.
\end{aligned}
\end{equation}
Note that $m=\left\lfloor\frac{n}{2}\right\rfloor-t$ for some $t\in\left\{0,\ldots,\left\lfloor\frac{n}{2}\right\rfloor-4\right\}$. Then, from (\ref{eq: f_n(m)2}) we have $f_n(m)=f_n\left(\left\lfloor\frac{n}{2}\right\rfloor-t\right)$ where
\begin{equation}\notag
\begin{aligned}
    f_n\left(\left\lfloor\frac{n}{2}\right\rfloor-t\right)&=\frac{n^2-n}{5}+\frac{2}{5}\left(\left\lfloor\frac{n}{2}\right\rfloor-t\right)\left(\left\lfloor\frac{n}{2}\right\rfloor-t-n\right)+\frac{a(\left\lfloor\frac{n}{2}\right\rfloor-t)+a(n-\left\lfloor\frac{n}{2}\right\rfloor+t)}{5}\\
    &=\frac{n^2-n}{5}+\frac{2}{5}\left(\left\lfloor\frac{n}{2}\right\rfloor-t\right)\left(-\left\lceil\frac{n}{2}\right\rceil-t\right)+\frac{a(\left\lfloor\frac{n}{2}\right\rfloor-t)+a(\left\lceil\frac{n}{2}\right\rceil+t)}{5}\\
    &=\frac{n^2-n}{5}-\frac{2}{5}\left\lfloor\frac{n}{2}\right\rfloor\left\lceil\frac{n}{2}\right\rceil + \frac{1}{5}\left(2t\left(t+\left\lceil\frac{n}{2}\right\rceil-\left\lfloor\frac{n}{2}\right\rfloor\right)+a\left(\left\lfloor\frac{n}{2}\right\rfloor-t\right)+a\left(\left\lceil\frac{n}{2}\right\rceil+t\right)\right).
\end{aligned}
\end{equation}
That is, determining $m$ that minimizes $f_n(m)$ is equivalent to determining $t\in\left\{0,\ldots,\left\lfloor\frac{n}{2}\right\rfloor-4\right\}$ that minimizes
\begin{equation}
    g_n(t)=2t\left(t+\left\lceil\frac{n}{2}\right\rceil-\left\lfloor\frac{n}{2}\right\rfloor\right)+a\left(\left\lfloor\frac{n}{2}\right\rfloor-t\right)+a\left(\left\lceil\frac{n}{2}\right\rceil+t\right)
\end{equation}

We will analyze $g_n(t)$ separately for $n$ even and $n$ odd.
\begin{itemize}
    \item If $n$ is even, then
    $$g_n(t)=2t^2+a\left(\frac{n}{2}-t\right)+a\left(\frac{n}{2}+t\right).$$
    Note that on one hand $g_n(0)=2\,a\left(\frac{n}{2}\right)\leq 10$, while for $t\geq 3$, we have $g_n(t)\geq 2t^2\geq 18$.
    Therefore, if $t$ minimizes $g_n(t)$, then $t\in\{0,1,2\}$.
    
    \item If $n$ is odd, then
    $$g_n(t)=2t(t+1)+a\left(\frac{n-1}{2}-t\right)+a\left(\frac{n+1}{2}+t\right).$$
    Note that $g_n(0)=a\left(\frac{n-1}{2}\right)+a\left(\frac{n+1}{2}\right)\leq 8$, while for $t\geq 2$, we have $g_n(t)\geq 2t(t+1)\geq 12$.
    Therefore, if $t$ minimizes $g_n(t)$, then $t\in\{0,1\}$.
\end{itemize}

In Table~\ref{tab: n chicos} we show, for $n\in[8..14]$, the computed values of $g_n(t)$ with $t\in\{0,1,2\}$ if $n$ is even, and $t\in\{0,1\}$ if $n$ is odd. The minimum of these values is highlighted in bold. These cases are analyzed separately because, in computing $g_n(t)$ for some of the values of $t$ considered, $a(5)$ appears, which follows a different rule from the general one.

\renewcommand{\arraystretch}{1.2}
\begin{table}[ht]
    \centering
    \begin{tabular}{|c|c|c|c|c|c|c|c|}
        \hline
        $n$ & $8$ & $9$ & $10$ & $11$ & $12$ & $13$ & $14$ \\ \hline
        $a\left(\left\lfloor\frac{n}{2}\right\rfloor\right)$
        & $3$ & $3$ & $5$ & $5$ & $0$ & $0$ & $3$ \\ \hline
        $a\left(\left\lceil\frac{n}{2}\right\rceil\right)$
        & $3$ & $5$ & $5$ & $0$ & $0$ & $3$ & $3$ \\ \hline

        \rowcolor{gray!20}
        $g_n(0)$
        & $\mathbf 6$ & $\mathbf 8$ & $10$ & $\mathbf 5$ & $\mathbf 0$ & $\mathbf 3$ & $\mathbf 6$ \\ \hline

        $a\left(\left\lfloor\frac{n}{2}\right\rfloor-1\right)$
        & $-$ & $-$ & $3$ & $3$ & $5$ & $5$ & $0$ \\ \hline
        $a\left(\left\lceil\frac{n}{2}\right\rceil+1\right)$
        & $-$ & $-$ & $0$ & $3$ & $3$ & $4$ & $4$ \\ \hline

        \rowcolor{gray!20}
        $g_n(1)$
        & $-$ & $-$ & $\mathbf 5$ & $10$ & $10$ & $13$ & $\mathbf 6$ \\ \hline

        $a\left(\left\lfloor\frac{n}{2}\right\rfloor-2\right)$
        & $-$ & $-$ & $-$ & $-$ & $3$ & $-$ & $5$ \\ \hline
        $a\left(\left\lceil\frac{n}{2}\right\rceil+2\right)$
        & $-$ & $-$ & $-$ & $-$ & $4$ & $-$ & $3$ \\ \hline

        \rowcolor{gray!20}
        $g_n(2)$
        & $-$ & $-$ & $-$ & $-$ & $15$ & $-$ & $16$ \\ \hline
    \end{tabular}
    \caption{Values of $g_n(t)$ for $n\in[8..14]$.}
    \label{tab: n chicos}
\end{table}
\renewcommand{\arraystretch}{1}

In Table~\ref{tab: n general} we present, for $n\geq 15$, the values of $g_n(t)$ with $t\in\{0,1,2\}$ if $n$ is even, and $t\in\{0,1\}$ if $n$ is odd. When computing $g_n(t)$, since the general rule for $a(n)$ depends on $n$ modulo $5$, the computation of $a\left(\left\lfloor\frac{n}{2}\right\rfloor-t\right)+a\left(\left\lceil\frac{n}{2}\right\rceil+t\right)$ depends on $n$ modulo $10$. Each column corresponds to a value $n\equiv k \mod 10$, with $k\in\{0,\dots,9\}$. Again, the minimum of these values is highlighted in bold.

\begin{table}[ht]
    \centering
    \renewcommand{\arraystretch}{1.2}
    \begin{tabular}{|c|c|c|c|c|c|c|c|c|c|c|}
    \hline
        $k$ & $0$ & $1$ & $2$ & $3$ & $4$ & $5$ & $6$ & $7$ & $8$ & $9$ \\ \hline
        $a\left(\left\lfloor\frac{n}{2}\right\rfloor\right)$
        & $0$ & $0$ & $0$ & $0$ & $3$ & $3$ & $4$ & $4$ & $3$ & $3$ \\ \hline
        $a\left(\left\lceil\frac{n}{2}\right\rceil\right)$
        & $0$ & $0$ & $0$ & $3$ & $3$ & $4$ & $4$ & $3$ & $3$ & $0$ \\ \hline

        \rowcolor{gray!20}
        $g_n(0)$
        & $\mathbf 0$ & $\mathbf 0$ & $\mathbf 0$ & $\mathbf 3$ & $\mathbf 6$ & $\mathbf 7$ & $\mathbf 8$ & $\mathbf 7$ & $\mathbf 6$ & $\mathbf 3$ \\ \hline

        $a\left(\left\lfloor\frac{n}{2}\right\rfloor-1\right)$
        & $3$ & $3$ & $0$ & $0$ & $0$ & $0$ & $3$ & $3$ & $4$ & $4$ \\ \hline
        $a\left(\left\lceil\frac{n}{2}\right\rceil+1\right)$
        & $0$ & $3$ & $3$ & $4$ & $4$ & $3$ & $3$ & $0$ & $0$ & $0$ \\ \hline

        \rowcolor{gray!20}
        $g_n(1)$
        & $5$ & $10$ & $5$ & $8$ & $\mathbf 6$ & $\mathbf 7$ & $\mathbf 8$ & $\mathbf 7$ & $\mathbf 6$ & $8$ \\ \hline

        $a\left(\left\lfloor\frac{n}{2}\right\rfloor-2\right)$
        & $4$ & $-$ & $3$ & $-$ & $0$ & $-$ & $0$ & $-$ & $3$ & $-$ \\ \hline
        $a\left(\left\lceil\frac{n}{2}\right\rceil+2\right)$
        & $3$ & $-$ & $4$ & $-$ & $3$ & $-$ & $0$ & $-$ & $0$ & $-$ \\ \hline

        \rowcolor{gray!20}
        $g_n(2)$
        & $15$ & $-$ & $15$ & $-$ & $11$ & $-$ & $\mathbf 8$ & $-$ & $11$ & $-$ \\ \hline
    \end{tabular}
    \renewcommand{\arraystretch}{1}
    \caption{Values of $g_n(t)$ for $n\geq 15$, each column corresponds to $n\equiv k \mod 10$.}
    \label{tab: n general}
\end{table}

Note that, for all $n\neq 10$, the minimum is attained at $t=0$. That is, 
$$\varphi_n=f_n\left(\left\lfloor\frac{n}{2}\right\rfloor\right)=\frac{n^2-n}{5}-\frac{2}{5}\left\lfloor\frac{n}{2}\right\rfloor\left\lceil\frac{n}{2}\right\rceil + \frac{1}{5}\underbrace{\left(a\left(\left\lfloor\frac{n}{2}\right\rfloor\right)+a\left(\left\lceil\frac{n}{2}\right\rceil\right)\right)}_{=g_n(0)}$$

For even $n\neq 10$, we have
$$\varphi_n=\frac{n^2-n}{5}-\frac{2}{5}\left(\frac{n}{2}\right)^2+\frac{1}{5}g_n(0)=\frac{n^2-2n+2g_n(0)}{10}.$$
For $n$ odd, we have
$$\varphi_n=\frac{n^2-n}{5}-\frac{2}{5}\frac{n^2-1}{4}+\frac{1}{5}g_n(0)=\frac{n^2-2n+1+2g_n(0)}{10}.$$
Finally, for $n=10$, we have $\varphi_{10}=f_{10}(4)=9=\frac{10^2-2\cdot 10+10}{10}$. Thus for all $n\geq 8$, we obtain
\begin{equation}\label{eq: varphi cases}
    \varphi_n=\begin{cases}
    \dfrac{n^2-2n}{10}, &\text{if }n\equiv 0,2\mod 10\quad (n\neq 10),\\
    \dfrac{n^2-2n+1}{10}, &\text{if }n\equiv 1\mod 10 \quad (n\neq 11),\\
    \dfrac{n^2-2n+7}{10}, &\text{if }n\equiv 3,9\mod 10 \quad (n\neq 9),\\
    \dfrac{n^2-2n+10}{10}, &\text{if }n=10,\\
    \dfrac{n^2-2n+11}{10}, &\text{if }n=11,\\
    \dfrac{n^2-2n+12}{10}, &\text{if }n\equiv 4,8\mod 10,\\
    \dfrac{n^2-2n+15}{10}, &\text{if }n\equiv 5,7\mod 10,\\
    \dfrac{n^2-2n+16}{10}, &\text{if }n\equiv 6\mod 10,\\
    \dfrac{n^2-2n+17}{10}, &\text{if }n=9.\\
\end{cases}
\end{equation}

Equivalently, $\varphi_n$ is given by
$$\varphi_n=\begin{cases}
    \left\lceil\dfrac{n^2-2n}{10}\right\rceil, &\text{if }n\equiv 0,1,2,3,9\mod 10 \text{ and }n\geq 12,\\[10pt]
    \left\lceil\dfrac{n^2-2n}{10}\right\rceil+1, &\text{if }n\equiv 4,5,6,7,8\mod 10 \text{ or }n\in\{9,10,11\}.
\end{cases}$$
\end{proof}

\begin{ex}
    Consider $n=16$. From what was done in the proof of Proposition~\ref{prop: varphi_n}, we have $\varphi_{16}=24$. Besides, $\varphi_{16}=24=f_{16}(6)=f_{16}(7)=f_{16}(8)$.
    This is, $\varphi_{16}$ is the cardinality of a minimum CEOT of the graphs $K_6+K_{10}$, $K_7+K_9$, and $K_8+K_8$.
\end{ex}

The proof of Proposition~\ref{prop: varphi_n} also determines the values of $t$ for which $f_n\left(\left\lfloor\frac{n}{2}\right\rfloor-t\right)$ attains its minimum, which is precisely $\varphi_n$. These values of $t$ are given in the following result.

\begin{remark}\label{obs: m-min-f_n(m)}
    For each $n\geq 8$, we have $\varphi_n=f_n\left(\left\lfloor\frac{n}{2}\right\rfloor-t\right)$ where $t$ is given by
    \begin{itemize}
        \item $t\in\{0,1,2\}$ for $n\equiv 6 \mod 10$,
        \item $t\in\{0,1\}$ for $n\equiv 4,5,7,8 \mod 10$ ($n\neq 8$),
        \item $t=0$ for $n\equiv 0,1,2,3,9 \mod 10$ or $n=8$,
        \item $t=1$ for $n=10$.
    \end{itemize}
\end{remark}

We have already seen that $\tdom(\Johnson{n}{3})\leq \varphi_n$ for all $n\geq 8$ and we have determined the value of $\varphi_n$. In the remainder of this subsection, we will prove that for $n\geq 10$ this bound is tight, i.e. $\tdom(\Johnson{n}{3})=\varphi_n$. Before that, we need the following.

Mantel's theorem states that a triangle-free graph has at most $\left\lfloor \frac{n^2}{4}\right\rfloor$ edges~\cite{mantel1907}. Since bipartite graph is triangle-free, it contains at most  $\left\lfloor \frac{n^2}{4}\right\rfloor$ edges. For non-bipartite graphs Erd\H{o}s~\cite{bondy2008} improved Mantel's theorem by showing the following.

\begin{lemma}
    If $G$ is not bipartite and triangle-free, then
    \[
    |E(G)|\leq \frac{(n-1)^2}{4}+1.
    \]
    \label{lem:edges-trianglefree}
\end{lemma}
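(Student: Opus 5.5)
The statement is Erd\H{o}s's refinement of Mantel's theorem, which the paper cites from~\cite{bondy2008}. I would prove it directly by isolating a shortest odd cycle and applying Mantel's theorem to the rest of the graph.

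\emph{Setting up the cycle.} Since $G$ is not bipartite, it contains an odd cycle. Let $C$ be a shortest one, of length $2k+1$. As $G$ is triangle-free, $2k+1\geq 5$. The cycle $C$ is induced, since a chord would split $C$ into two shorter cycles, one of which is odd. Hence $G[V(C)]$ contributes exactly $2k+1$ edges.

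\emph{Key claim: every vertex $v\notin V(C)$ has at most two neighbours on $C$.} Take two neighbours $x,y$ of $v$ on $C$. They split $C$ into two arcs of lengths $a$ and $b$, with $a+b=2k+1$.
\begin{itemize}
\item Triangle-freeness gives $a,b\geq 2$.
\item One of $a,b$ is odd, say $a$. The closed walk $v\,x\,(\text{arc})\,y\,v$ is then an odd cycle of length $a+2$.
\item Minimality of $C$ forces $a+2\geq 2k+1$, so $b\leq 2$, hence $b=2$.
\end{itemize}
So any two neighbours of $v$ on $C$ are at distance exactly $2$ along $C$. Suppose $v$ had three neighbours on $C$. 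They would be pairwise at cyclic distance $2$, which forces $2k+1=6$. This is impossible since the length is odd. Therefore $v$ sends at most $2$ edges to $C$, and there are at most $2(n-2k-1)$ edges between $C$ and the rest of $G$. I expect this claim to be the main point requiring care; the remaining steps are routine.

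\emph{Counting.} The graph $G-V(C)$ is triangle-free on $n-2k-1$ vertices, so Mantel's theorem bounds its edges by $(n-2k-1)^2/4$. Altogether
\[
|E(G)|\leq \frac{(n-2k-1)^2}{4}+2(n-2k-1)+2k+1 .
\]
Put $m=n-1$ and $s=2k\geq 4$, and note $m\geq s$. The right-hand side minus $\frac{m^2}{4}+1$ equals
\[
m\Bigl(2-\frac{s}{2}\Bigr)+\frac{s^2}{4}-s .
\]
The coefficient $2-\frac{s}{2}$ is at most $0$ because $s\geq 4$, so we may replace $m$ by its lower bound $s$. This bounds the difference by $2s-\frac{s^2}{2}+\frac{s^2}{4}-s=s-\frac{s^2}{4}\leq 0$, again using $s\geq 4$. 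Hence $|E(G)|\leq \frac{(n-1)^2}{4}+1$, as claimed.
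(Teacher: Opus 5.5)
Your proof is correct and complete. The paper itself gives no proof of this lemma: it quotes it as Erd\H{o}s's strengthening of Mantel's theorem, citing~\cite{bondy2008}. So your argument is not an alternative to the paper's reasoning but a self-contained justification of a cited fact, and the shortest-odd-cycle route you take is the standard one.

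The individual steps check out:
\begin{itemize}
\item The shortest odd cycle $C$ has length $2k+1\geq 5$ and is induced.
\item Every vertex off $C$ has at most two neighbours on $C$.
\item Mantel's theorem bounds the edges of $G-V(C)$.
\item The final inequality holds because $m=n-1\geq s=2k\geq 4$.
\end{itemize}

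The one step worth writing out in full is the exclusion of three neighbours. Let $v$ have three neighbours on $C$. Each of the three arcs of $C$ between consecutive neighbours has length at least $2$, by triangle-freeness. Hence for each consecutive pair the complementary arc has length at least $4$. So the arc of length $2$ must be the one between them, and the three arcs then sum to $6\neq 2k+1$.
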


\begin{lemma}\label{lema: dom total vert-disj}
     If $n\geq 10$ and $D$ is a minimum total dominating set of $\Johnson{n}{3}$, then $V(G^D)$ can be partitioned into two cliques.
\end{lemma}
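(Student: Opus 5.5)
We argue by edge counting in the complement. Let $D$ be a minimum total dominating set of $\Johnson{n}{3}$ and put $G=G^D$. By Proposition~\ref{prop:covering-overlapping}, $D$ is a CEOT of $G$ and $\alpha(G)\le 2$. By \eqref{eq: tn D dom minimo} and Proposition~\ref{prop: cota sup tdom varphi_n}, $|D|\le\varphi_n$. Lemma~\ref{lem: edges CEOT} then gives
\[
|E(G)|\le \tfrac52|D|\le \tfrac52\varphi_n .
\]
Now consider the complement $\overline{G}$. Since $\alpha(G)\le 2$, $\overline{G}$ is triangle-free. The statement that $V(G)$ can be partitioned into two cliques of $G$ is exactly the statement that $\overline{G}$ is bipartite. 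So it suffices to show that $\overline{G}$ is bipartite.

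Suppose instead that $\overline{G}$ is not bipartite. By Lemma~\ref{lem:edges-trianglefree},
\[
|E(\overline{G})|\le \tfrac{(n-1)^2}{4}+1,
\]
and therefore
\[
|E(G)|\ge \binom{n}{2}-\tfrac{(n-1)^2}{4}-1=\tfrac{(n-1)(n+1)}{4}-1=\tfrac{n^2-5}{4}.
\]
Combining with the upper bound yields
\[
\tfrac{n^2-5}{4}\le \tfrac52\varphi_n, \qquad\text{that is,}\qquad \varphi_n\ge \tfrac{n^2-5}{10}.
\]
On the other hand, \eqref{eq: varphi cases} gives $\varphi_n\le \frac{n^2-2n+17}{10}$ for all $n\ge 8$. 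For $n\ge 12$ we have $n^2-5>n^2-2n+17$, which is a contradiction. So $\overline{G}$ is bipartite for $n\ge 12$.

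The main obstacle is the small range $n\in\{10,11\}$, where the general bound has slack. Here we substitute the exact values of $\varphi_n$ instead.
\begin{itemize}
\item For $n=10$: $\varphi_{10}=9$, so $|E(G)|\le 22$. But the bound above gives $|E(G)|\ge 45-\frac{81}{4}-1=23.75$, so $|E(G)|\ge 24$. This is a contradiction.
\item For $n=11$: $\varphi_{11}=\frac{121-22+11}{10}=11$, so $|E(G)|\le 27$. But $|E(G)|\ge 55-25-1=29$. This is a contradiction.
\end{itemize}

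Hence in all cases $n\ge 10$, $\overline{G}$ is a bipartite graph. Its two colour classes are independent in $\overline{G}$, hence cliques in $G$, and they partition $V(G^D)$. One detail needs care: $\overline{G}$ may have isolated vertices or be disconnected. This is harmless, since any bipartition (with a possibly empty class) suffices.
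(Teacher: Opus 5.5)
Your proof is correct and is essentially the paper's argument. In both, $\overline{G}$ is triangle-free, and if it were non-bipartite, Erd\H{o}s's bound would force $|E(G)|\ge\frac{n^2-5}{4}$, which contradicts $|E(G)|\le\frac52|D|\le\frac52\varphi_n$. The only difference is cosmetic: you bound the constant in $\varphi_n$ uniformly by $17$, which works for $n\ge 12$, and then treat $n\in\{10,11\}$ with the exact values, whereas the paper checks $2n-5-c(n)>0$ directly for each possible constant $c(n)$.
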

\begin{proof}
    Let $n\geq 10$, let $D$ be a minimum total dominating set of $\Johnson{n}{3}$, and let $G=G^D$. From the definition of $G$, it follows that $G$ is a graph of order $n$ with $\alpha(G)\leq 2$, and that $D$ is a CEOT of $G$.

    Suppose $V(G)$ does not admit a partition into two cliques. Consequently, the graph $\overline{G}$ is not bipartite. Also, $\omega(\overline{G})=\alpha(G)\leq 2$, so $\overline{G}$ is triangle-free. By Lemma~\ref{lem:edges-trianglefree}, we obtain
    $$|E(\overline{G})|\leq \frac{(n-1)^2}{4}+1,$$
    and we have
    $$|E(G)|=\binom{n}{2}-|E(\overline{G})|\geq \frac{n(n-1)}{2}-\frac{(n-1)^2}{4}-1=\frac{n^2-5}{4}.$$
    On the other hand, by Lemma~\ref{lem: edges CEOT} and Proposition~\ref{prop: cota sup tdom varphi_n} it follows that
    $$|E(G)|\leq \frac{5}{2}\,|D|=\frac{5}{2}\,\tdom(\Johnson{n}{3})\leq \frac{5}{2}\,\varphi_n.$$
    Note that, from (\ref{eq: varphi cases}), we have 
    $$\varphi_n=\frac{n^2-2n+c(n)}{10},$$
    where $c(n)$ is a constant depending on $n$ with $c(n)\in\{0,1,7,10,11,12,15,16,17\}$.
    Note that
    \begin{equation}\notag
    \begin{aligned}
        \frac{n^2-5}{4}-\frac{5}{2}\,\varphi_n&=\frac{n^2-5}{4}-\frac{5}{2}\left(\frac{n^2-2n+c(n)}{10}\right)\\
        &=\frac{n^2-5}{4}-\frac{n^2-2n+c(n)}{4}=\frac{2n-5-c(n)}{4}.
    \end{aligned}
    \end{equation}
    Since from the values of $n$ and $c(n)$ we can easily deduce that $2n-5-c(n) > 0$ for any $n \geq 10$, we get
    $$\frac{n^2-5}{4}\leq |E(G)|\leq \frac{5}{2}\,\varphi_n <\frac{n^2-5}{4},$$
    and we arise to a contradiction. Therefore, we conclude that $V(G)$ admits a partition into two cliques.
\end{proof}

We are now in a position to prove the following result.
\begin{theorem}\label{teo: tdom J(n,3)}
    For $n\geq 10$, it holds $\tdom(\Johnson{n}{3})=\varphi_n$.
\end{theorem}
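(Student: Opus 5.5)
The upper bound $\tdom(\Johnson{n}{3})\leq\varphi_n$ is already Proposition~\ref{prop: cota sup tdom varphi_n}, so only the lower bound needs proof. Let $D$ be a minimum total dominating set of $\Johnson{n}{3}$ and $G=G^D$. By Lemma~\ref{lema: dom total vert-disj}, $V(G)=A\cup B$ with $A,B$ disjoint cliques, say $|A|=m\leq|B|=n-m$. The plan is to show $|D|\geq f_n(m')$ for some admissible $m'$, or directly $|D|\geq\varphi_n$.

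\textbf{Step 1: split $D$.} Every triangle of $D$ lies inside $A$, inside $B$, or is \emph{mixed}, with two vertices on one side and one on the other. Let $D_A$ and $D_B$ be the triangles inside $A$ and inside $B$, and $D_M$ the mixed ones. The edges of $G$ inside $A$ are all edges of $K_m$, and these are covered by $D_A$ together with the mixed triangles having two vertices in $A$. Likewise for $B$. Cross edges of $G$ are covered only by mixed triangles.

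\textbf{Step 2: lower bound via Lemma~\ref{lem: edges CEOT}.} $D$ is a CEOT of $G$, so $|D|\geq\frac{2}{5}|E(G)|\geq\frac{2}{5}\big(\binom{m}{2}+\binom{n-m}{2}\big)$. Since $\binom{m}{2}+\binom{n-m}{2}$ is minimized at $m=\lfloor n/2\rfloor$, this gives $|D|\geq\frac{n^2-2n}{10}$ up to the parity correction, i.e. $|D|\geq\lceil\frac{n^2-2n}{10}\rceil$ (for odd $n$, $\lceil\frac{n^2-2n+1}{10}\rceil$). By Proposition~\ref{prop: varphi_n} this already settles $n\equiv0,1,2,3,9\pmod{10}$ with $n\geq12$.

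\textbf{Step 3: the remaining residues and $n\in\{10,11\}$.} Here one extra triangle must be gained, and this is the main obstacle. I would refine Step 2 by tracking the slack in Lemma~\ref{lem: edges CEOT}. Equality $3|D|=|E(G)|+|D|/2$ requires every triangle to share exactly one edge with exactly one other triangle, and that edge to lie in exactly two triangles, so $D$ pairs up into edge-disjoint diamonds. The argument then runs as follows.

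If $m$ is far from $n/2$, then $\binom{m}{2}+\binom{n-m}{2}$ exceeds the minimum by at least $2t(t+\lceil n/2\rceil-\lfloor n/2\rfloor)$, which is enough by the analysis of $g_n(t)$.

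If $m$ is near $n/2$, the mixed triangles and cross edges of $G$ only add edges, so we may essentially assume $D_M$ is small. Then $D$ restricted to each clique is nearly a CEOT of $K_m$ and of $K_{n-m}$, giving $|D|\geq\rho^s_\Delta(K_m)+\rho^s_\Delta(K_{n-m})-(\text{correction})$, where the correction is controlled by $|D_M|$.

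To make this precise, charge each mixed triangle by the cross edges it introduces: a mixed triangle contributes two cross edges and at most one inner edge. Hence the counting bound improves whenever $D_M\neq\emptyset$, since each mixed triangle covers at most one edge of $K_m\cup K_{n-m}$ while costing a full triangle. If $D_M=\emptyset$, then $D=D_A\cup D_B$, where each part is a CEOT of its clique (overlapping triangles share two vertices, hence lie on the same side), so $|D|\geq f_n(m)\geq\varphi_n$. If $D_M\neq\emptyset$, the weighted count $3|D|\geq|E(G)|+|D|/2$ with $|E(G)|\geq\binom{m}{2}+\binom{n-m}{2}+2$ plus the integrality of $|D|$ should yield the extra $+1$ in the residues $4,\dots,8$. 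For these, $\varphi_n$ exceeds the counting bound by exactly one, and $a(\cdot)$ measures the rounding loss from Proposition~\ref{prop:CEOT_complete}, so verifying the inequality is a finite check modulo $10$ (with $n=10,11$ checked separately against Table~\ref{tab: n chicos}). If the $+2$ cross edges do not suffice in some residue, I would sharpen by noting that a single mixed triangle still needs an overlapping partner, which forces a second mixed triangle or further cross edges.
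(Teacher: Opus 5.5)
Your argument is correct once the finite check you leave as ``should yield'' is carried out, and it takes a genuinely different route from the paper. The check goes through with no sharpening needed. If $D_M\neq\emptyset$ then $|E(A,B)|\ge 2$, so Lemma~\ref{lem: edges CEOT} gives $|D|\ge\lceil (n^2-2n+8)/10\rceil$ for even $n$ and $|D|\ge\lceil (n^2-2n+9)/10\rceil$ for odd $n$. These equal:
\begin{itemize}
\item $(n^2-2n+12)/10$, $(n^2-2n+16)/10$, $(n^2-2n+12)/10$ for $n\equiv 4,6,8 \pmod{10}$;
\item $(n^2-2n+15)/10$ for $n\equiv 5,7 \pmod{10}$;
\item $9$ and $11$ for $n=10,11$.
\end{itemize}
These are exactly $\varphi_n$. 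In the remaining residues they are at least your Step~2 bound, which already equals $\varphi_n$. So the dichotomy $D_M=\emptyset$ versus $D_M\neq\emptyset$ settles all $n\ge 10$ at once.

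Step~2 is therefore subsumed. The paragraphs about $m$ ``far from $n/2$'', ``essentially assume $D_M$ is small'' and the unspecified ``correction'' are not arguments as written and should be deleted. Incidentally, the edge excess at $m=\lfloor n/2\rfloor-t$ is $t(t+\lceil n/2\rceil-\lfloor n/2\rfloor)$, not twice that.

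The one loose end is in the case $D_M=\emptyset$. There $f_n(m)$ is only defined for $4\le m\le n-4$, so you should note two things. First, $m\in\{2,3\}$ cannot occur: an edge inside $A$ would need a triangle inside $A$ with an overlapping partner inside $A$. Second, $m\le 1$ gives $|D|\ge\rho_{\Delta}^s(K_{n-1})>\varphi_n$.

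Compared with the paper: it also starts from the two-clique partition, but then does two further things.
\begin{itemize}
\item It rules out $|A|\in\{2,3\}$ by counting.
\item It runs an iterative exchange. Each mixed triangle $\{a_1,a_2,b\}$ is replaced by a triangle $\{a_1,a_2,a\}$ inside $A$, or deleted when $a_1a_2$ is already doubly covered inside $A$. Coverage and the overlap property are preserved throughout, ending in a CEOT of $K_m+K_{n-m}$ of size at most $|D|$.
\end{itemize}
That is a structural statement: some minimum total dominating set has $G^D$ equal to a disjoint union of two cliques. It does not depend on the arithmetic of $n$ modulo $10$.

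Your route replaces the exchange with one observation. The two cross edges forced by any mixed triangle are worth $\frac{2}{5}\cdot 2$ of a triangle in the counting bound. Together with rounding, this closes the gap of at most one between $\bigl\lceil\frac{2}{5}\bigl(\binom{\lfloor n/2\rfloor}{2}+\binom{\lceil n/2\rceil}{2}\bigr)\bigr\rceil$ and $\varphi_n$ in every residue. It is shorter and avoids the $m\in\{2,3\}$ analysis. However, it yields only the value, not the structure, and it relies on a residue-by-residue numerical coincidence that would fail if the rounding losses $a(\cdot)$ were larger.
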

\begin{proof}
    By Proposition~\ref{prop: cota sup tdom varphi_n} we know that $\tdom(J(n,3)) \leq \varphi_n$. To show the reversed inequality, let $n\geq 10$, let $D$ be a minimum total dominating set of $\Johnson{n}{3}$ and let $G=G^D$. By Lemma~\ref{lema: dom total vert-disj} it follows that $V(G)$ can be partitioned into two cliques $A$ and $B$, with $|A|\leq |B|$, $|A|=m$, and $|B|=n-m$. 
    
    Let $E(A,B)$ be the set of edges with one endpoint in $A$ and one endpoint in $B$. 
    If $E(A,B)=\emptyset$, then $G$ is a disjoint union of complete graphs and we obtain $|D|=\varphi_n$. Therefore, suppose $E(A,B)\neq \emptyset$. In this case, if $e=ab\in E(A,B)$ with $a\in A$ and $b\in B$, then since $D$ is an edge triangle covering, there exists $c\in [n]\setminus\{a,b\}$ such that $\{a,b,c\}\in D$. If $c\in A$, then $bc\in E(A,B)$ and if $c\in B$, then $ac\in E(A,B)$. In either case, we have $|E(A,B)|\geq 2$.

    Note that we may assume $m\neq 2$. Indeed, if $A=\{x_1,x_2\}$, since $x_1x_2\in E(G)$ and $D$ is a CEOT, there exists a triangle $T=\{x_1,x_2,b\}\in D$ for some $b\in B$. Then, considering $A'=A\cup\{b\}$ and $B'=B\setminus \{b\}$, we obtain a new partition of $V(G)$ into two cliques $A'$ and $B'$ with $|A'|= 3$ and $|B'|=n-3\geq 7$. Therefore, the partition $(A,B)$ of $V(G)$ can be chosen so that $m \neq 2$.
    
    Let us now see that $m\neq 3$.
    Note that
    \begin{equation}\notag
        \begin{aligned}
            |E(G)|&=\binom{m}{2}+\binom{n-m}{2}+|E(A,B)|\\
            &=\frac{m(m-1)}{2}+\frac{(n-m)(n-m-1)}{2}+|E(A,B)|\\
            &=\frac{n^2-n+2m^2-2nm}{2}+|E(A,B)|=\frac{n^2-n}{2}+m(m-n)+|E(A,B)|.
        \end{aligned}
    \end{equation}
    Moreover, by Lemma~\ref{lem: edges CEOT}, it follows that
    \begin{equation}\label{eq: cota sup D}
        |D|\geq \frac{2}{5}|E(G)|=\frac{n^2-n}{5}+\frac{2}{5}m(m-n)+\frac{2}{5}|E(A,B)|
    \end{equation}
    On the other hand, let $m'=\left\lfloor\frac{n}{2}\right\rfloor$ for $n\geq 11$, and $m'=4$ for $n=10$. By Remark~\ref{obs: m-min-f_n(m)} we have $\varphi_n=f_n\left(m'\right)$, where
    $$f_n(m')=\frac{n^2-n}{5}+\frac{2}{5}m'(m'-n)+\frac{a(m')+a(n-m')}{5}.$$
    Observe that if $n=10$, then $a(m')+a(n-m')=a(4)+a(6)=3+0=3\leq 8$. If $n=11$, then $a(m')+a(n-m')=a(5)+a(6)=5+0=5\leq 8$. Finally, if $n\geq 12$, then $\left\lfloor\frac{n}{2}\right\rfloor\geq 6$ and $n-\left\lfloor\frac{n}{2}\right\rfloor\geq 6$. Therefore, $a(m')+a(n-m')\leq 4+4=8$. That is, for every $n\geq 10$ we have $a(m')+a(n-m')\leq 8$.
    
    By Proposition~\ref{prop: cota sup tdom varphi_n} we have $|D|=\tdom(\Johnson{n}{3})\leq\varphi_n$, then for every $n\geq 10$, we have 
    \begin{equation}\label{eq: cota inf D}
        |D|\leq\varphi_n=f_n(m')\leq \frac{n^2-n}{5}+\frac{2}{5}m'(m'-n)+\frac{8}{5}.
    \end{equation}
    From (\ref{eq: cota sup D}) and (\ref{eq: cota inf D}), it follows that
    \begin{equation}\label{eq: comp m y m'}
        \begin{aligned}
            \frac{n^2-n}{5}+\frac{2}{5}m(m-n)+\frac{2}{5}|E(A,B)|&\leq \frac{n^2-n}{5}+\frac{2}{5}m'(m'-n)+\frac{8}{5}\\
            m(m-n)+|E(A,B)|&\leq m'(m'-n)+4.
        \end{aligned}
    \end{equation}
    If $m=3$, since $|E(A,B)|\geq 2$, then
    $$3(3-n)+2\leq 3(3-n)+|E(A,B)|\leq m'(m'-n)+4,$$
    so
    \begin{equation}\label{eq: cond m'}
        m'(m'-n)-3(3-n)+2\geq 0.
    \end{equation}
    If $n=10$, then $m'=4$ and
    $$m'(m'-n)-3(3-n)+2=4(-6)-3(-7)+2=-24+21+2=-1<0.$$
    If $n\geq 11$, then $m'=\left\lfloor\frac{n}{2}\right\rfloor$, so
    \begin{equation}\notag
    \begin{aligned}
        &m'(m'-n)-3(3-n)+2=-\left\lfloor\frac{n}{2}\right\rfloor\left\lceil\frac{n}{2}\right\rceil-3(3-n)+2\leq \frac{-n^2+1}{4}-3(3-n)+2\\
        &=-\frac{1}{4}\left(n^2-1+12(3-n)-8\right)=-\frac{1}{4}\left(n^2-12n+27\right)=-\frac{1}{4}\underbrace{(n-3)}_{>0}\underbrace{(n-9)}_{>0}<0.
    \end{aligned}
    \end{equation}
    In either case, we obtain $m'(m'-n)-3(3-n)+2<0$, contradicting (\ref{eq: cond m'}). This contradiction arises from assuming $m=3$. Therefore, we have $m\notin\{2,3\}$.

    Let $T_1,\ldots, T_k$ be the triangles of $D$ having nonempty intersection with both $A$ and $B$, ordered so that for some $\ell\in\{0,1,\ldots,k\}$ we have $|T_i\cap A|=2$ if and only if $i\leq \ell$. 

    Let $D_0:=D$. We will iterate until obtaining a collection of triangles $D'$ with $|D'|\leq |D|$ such that the graph $G^{D'}$ is a disjoint union of two complete graphs. To do this, at step $i$, we will obtain a collection of triangles $D_{i}$ of $G$ such that
    \begin{enumerate}
        \item $D_i\cap \{T_1,\ldots, T_i\}=\emptyset$,
        \item $\{T_{i+1},\dots, T_k\}\subseteq D_i$, and these are the only triangles of $D_i$ having nonempty intersection with both $A$ and $B$,
        \item $|D_i|\leq |D|$, 
        \item $D_i$ covers all edges of $E(G)\setminus E(A,B)$, and
        \item\label{eq: condicion D_{i-1}} for each $T\in D_i$ such that $T\subseteq A$ or $T\subseteq B$, there exists $T'\in D_i$ such that $|T\cap T'|=2$.
    \end{enumerate}

    At step $i$ we proceed as follows. Let $D_{i-1}$ be the collection of triangles obtained at step $(i-1)$, which satisfies these 5 conditions.
    
    If $i\leq \ell$, then $T_i=\{a_1^i,a_2^i,b^i\}$ for some $a_1^i,a_2^i\in A$ and $b^i\in B$. We define $D_i$ as follows.
    \begin{itemize}
        \item If $|\{a_1^i,a_2^i\}\cap T|\leq 1$ for every $T\in D_{i-1}$ with $T\subseteq A$, then we choose $a\in A\setminus\{a_1^i,a_2^i\}$ (which exists since $|A|\neq 2$). We define $T_i'=\{a_1^i,a_2^i,a\}$ and $D_i=(D_{i-1}\setminus\{T_i\})\cup \{T_i'\}$.

        \item If there is a unique $T_0\in D_{i-1}$ with $\{a_1^i,a_2^i\}\subseteq T_0\subseteq A$, then we choose $a\in A\setminus T_0$ (which exists since $|A|\neq 3$). We again define $T_i'=\{a_1^i,a_2^i,a\}$ and $D_i=(D_{i-1}\setminus\{T_i\})\cup \{T_i'\}$.

        \item If there exist two triangles $T_0,S_0\in D_{i-1}$ such that $\{a_1^i,a_2^i\}\subseteq T_0\subseteq A$ and $\{a_1^i,a_2^i\}\subseteq S_0\subseteq A$, then we define $D_i=D_{i-1}\setminus\{T_i\}$.
    \end{itemize}
    Observe that the triangles $T_i'$ defined in the first two cases do not belong to $D_{i-1}$.

    Note that the only edge of $E(G)\setminus E(A,B)$ covered by $T_i$ is $a_1^ia_2^i$. In the first two cases, we see that this edge is also covered by $T_i'$, while in the last case, there exist at least two triangles in $D_{i-1}\setminus\{T_i\} \subseteq D_i$ that cover it. In either case, $D_i$ covers all edges of $E(G)\setminus E(A,B)$. 
    Moreover, the only triangles in $D_i$ having nonempty intersection with both $A$ and $B$ are the triangles $T_j$ for $j>i$.

    Finally, let $T\in D_i$ be such that $T\subseteq A$ or $T\subseteq B$. Let us see that there exists $T'\in D_i$ such that $|T\cap T'|=2$.
    First, if $T\in D_{i-1}\setminus\{T_i\}$, since (\ref{eq: condicion D_{i-1}}) holds we know there exists $T'\in D_{i-1}$ such that $|T\cap T'|=2$. If $T'\neq T_i$, then $T'\in D_i$, and we have found $T'\in D_i$ such that $|T\cap T'|=2$. If $T'=T_i$, note that since $T\subseteq A$ or $T\subseteq B$, we must have $T\cap T'=\{a_1^i,a_2^i\}$. If $T'$ is the only triangle in $D_{i-1}$ such that $\{a_1^i,a_2^i\}\subseteq T'\subseteq A$, then by how we defined $D_i$, $T_i'\in D_i$ also satisfies $|T\cap T_i'|=2$. Otherwise, that is, if there is another triangle $T''\in D_{i-1}$ such that $\{a_1^i,a_2^i\}\subseteq T''\subseteq A$, then $T''\in D_i=D_{i-1}\setminus\{T_i\}$, and $|T\cap T''|=|\{a_1^i,a_2^i\}|=2$ holds.
    If $T=T_i'=\{a_1^i,a_2^i,a\}$, then since edge $a_1^ia$ is covered by some triangle $T'\in D_{i-1}\setminus\{T_i\}\subseteq D_i$, we find $T'\in D_i$ such that $|T\cap T'|=|\{a_1^i,a\}|=2$.

    That is, we obtain a collection of triangles $D_i$ satisfying the 5 mentioned conditions.
    If $i>\ell$ then $T_i=\{a^i,b_1^i,b_2^i\}$ for some $a^i\in A$ and $b_1^i,b_2^i\in B$. We proceed analogously to what was done for $i\leq \ell$ and obtain a collection of triangles $D_i$ that also satisfies the 5 conditions.

    Let now $D'=D_k$. We have $D'\cap \{T_1,\ldots, T_k\}=\emptyset$ and for each $T\in D'$, either $T\subseteq A$ or $T\subseteq B$. Moreover $D'$ covers all edges in $E(G)\setminus E(A,B)$. That is, $D'$ is an edge triangle covering of the graph $G'$, the disjoint union of the complete graphs $G[A]$ and $G[B]$. Moreover, since condition (\ref{eq: condicion D_{i-1}}) holds, $D'$ is a CEOT of $G'$. As $\alpha(G')=2$, it follows from Proposition~\ref{prop:covering-overlapping} that $D'$ is a total dominating set of $J(n,3)$ and thus $\tdom(J(n,3)) \leq |D'| \leq |D|= \tdom(J(n,3))$. Since the definition of $\varphi_n$ implies that $$\varphi_n \leq \rho_{\Delta}^s(K_{|A|})+\rho_{\Delta}^s(K_{|B|}) \leq |D'|=\tdom(J(n,3)),$$ we get $\tdom(J(n,3)) \geq \varphi_n$ as desired.
\end{proof}

\begin{remark}
    In Theorem \ref{teo: tdom J(n,3)} we proved that $\tdom(\Johnson{n}{3})=\varphi_n$ for all $n\geq 10$. We already know that $\tdom(\Johnson{6}{3})=4$ and $\tdom(\Johnson{7}{3})=5$. For $n\in \{8,9\}$, we computed the value of $\tdom(\Johnson{n}{3})$ using CPLEX with integer linear programming, obtaining $\tdom(\Johnson{n}{3})=\varphi_n$ for each $n\geq 6$. 
\end{remark}

\section{Further remarks}\label{sec:FurtherRem}

This work provides a first general study on domination and total domination in Johnson graphs. For $k\in \{2,3\}$, we establish the exact value of $\gamma_t(\Johnson{n}{k})$ (with $\gamma(\Johnson{n}{k})$ following directly from lotto designs), while for the general case $n \geq 2k$, we present bounds and monotonicity results for both parameters. In particular, one of our main results shows the strict increase of $\gamma(\Johnson{n}{k})$ in steps of two in $n$ (Theorem~\ref{thm:strict - 2steps}). As future work, this result motivates further investigation into the analogous behavior of $\gamma_t(\Johnson{n}{k})$.

Regarding the contrast between $\dom(\Johnson{n}{k})$ and $\tdom(\Johnson{n}{k})$, an interesting question is whether either of the known general bounds $\dom(G)\leq \tdom(G)\leq 2\dom(G)$ are tight for the graphs $\Johnson{n}{k}$. As we have obtain the exact values of these parameters for $k\in \{2,3\}$, it follows that neither of these bounds is close-fitting in general, and that both the difference between $\tdom(\Johnson{n}{k})$ and $\dom(\Johnson{n}{k})$ and the difference between $2\dom(\Johnson{n}{k})$ and $\tdom(\Johnson{n}{k})$ grow without bound with $n$ ($k\in\{2,3\}$).

Finally, to advance the study of the (total) dominating sets of $\Johnson{n}{k}$ for $k\geq 4$, we generalize the translation developed for $k=3$ to the setting of $(k-1)$-uniform hypergraphs. This yields a characterization of the (total) dominating sets of $\Johnson{n}{k}$ in terms of coverings of the complete hypergraph.

The translation of (total) domination in $\Johnson{n}{3}$ to edge (overlapping) triangle covering in a graph $G$ can be extended to (total) domination in $\Johnson{n}{k}$ and (overlapping) coverings in $(k-1)$-uniform hypergraphs. For this we need the following. Given a hypergraph $H$, $\alpha(H)$ is the maximum cardinality of a vertex subset of $V(H)$ which does not contain any hyperedge. For a $k$-uniform hypergraph $H$, a subset $K \subseteq [n]$ of cardinality at least $k$ induces a complete subhypergraph of $H$, if every $k$-subset of $K$ is an hyperedge in $H$.

\begin{prop}\label{prop:hypergraph}
    Let $D \subseteq V(\Johnson{n}{k})$ and let $\mathcal{F}$ be the family of all $(k-1)$-uniform hypergraphs $G$ of order $n$ with $\alpha(G) \leq k-1$. Then $D$ is a dominating set of $\Johnson{n}{k}$ if and only if $D$ is a covering of hyperedges of a hypergraph $H$ by complete hypergraphs of order $k$ of some $H\in \mathcal{F}$.
\end{prop}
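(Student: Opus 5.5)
We mirror the proof of Proposition~\ref{prop:covering-overlapping} (and the domination correspondence from \cite{CDT-2025}), replacing graphs by $(k-1)$-uniform hypergraphs. A vertex $u\in D$ is a $k$-subset of $[n]$; all of its $(k-1)$-subsets together form a complete $(k-1)$-uniform hypergraph of order $k$ on vertex set $u$, which plays the role of a triangle.

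\emph{Forward direction.} Given a dominating set $D$ of $\Johnson{n}{k}$, we define $H^D$ as the $(k-1)$-uniform hypergraph on $[n]$. Its hyperedges are exactly the $(k-1)$-subsets $e$ with $e\subseteq u$ for some $u\in D$. By construction, every $u\in D$ induces a complete subhypergraph of order $k$ in $H^D$. Every hyperedge of $H^D$ lies in one of these, so $D$ is a covering of the hyperedges of $H^D$ by complete hypergraphs of order $k$.

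It remains to check $\alpha(H^D)\le k-1$. Suppose instead that some $k$-set $x\subseteq[n]$ contains no hyperedge. Then $x\notin D$, since otherwise all its $(k-1)$-subsets would be hyperedges. Moreover, $|x\cap u|\le k-2$ for every $u\in D$, since $|x\cap u|=k-1$ would make $x\cap u$ a hyperedge. So $x$ is undominated, a contradiction. Here we use that any vertex set of size greater than $k$ contains a $k$-subset, so it suffices to rule out independent $k$-sets.

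\emph{Converse.} Let $D$ be a covering of the hyperedges of some $H\in\mathcal F$ by complete subhypergraphs of order $k$, each identified with its $k$-vertex set. Take any $x\in\binom{[n]}{k}\setminus D$. Since $\alpha(H)\le k-1$, the $k$-set $x$ is not independent, so it contains a hyperedge $e$ with $|e|=k-1$. Some $u\in D$ covers $e$, so $e\subseteq u$. Since $u\neq x$, we get $|x\cap u|=k-1$, so $x$ is adjacent to $u$ and $D$ is dominating.

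\emph{Main obstacle.} There is no real mathematical difficulty. The care needed is in the definitions: the cover elements must be genuine sub-hypergraphs of $H$, meaning all $(k-1)$-subsets of each $u\in D$ are hyperedges of $H$. In the forward direction we guarantee this by choosing $H=H^D$.
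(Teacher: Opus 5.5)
Your proof is correct and follows the paper's argument essentially step for step. You build $H^D$ from the $(k-1)$-subsets of members of $D$, get $\alpha(H^D)\le k-1$ from the fact that an independent $k$-set would be undominated, and for the converse use a hyperedge contained in any $x\notin D$; you are also slightly more careful than the paper in checking that such an independent $k$-set cannot itself belong to $D$ and that it suffices to rule out independent sets of size exactly $k$.
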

\begin{proof}
Let $H$ be an $(k-1)$-uniform hypergraph of order $n$ with $V(H)=[n]$. For a family $D$ of $k$-element subsets of $[n]$ we define a hypergraph $H^D$ as follows: $V(G^D)=[n]$ and for each $u \in D$ every $(k-1)$-subset of $u$ is an hyperedge of $H^D$, i.e.\ $u$ induces a complete subhypergraph in $H^D$. Note that $H^D$ is $(k-1)$-uniform hypergraph. 

Let $D$ be a dominating set of $\Johnson{n}{k}$ and let $\{x_1,\ldots ,x_{k-1}\}$ be an arbitrary hyperedge of $H^D$. Definition of $H^D$ implies that there exists $a \in [n]$ such that $x=\{x_1,\ldots ,x_{k-1},a\} \in D$. Hence, any $(k-1)$-subset of $x$ is an hyperedge of $G^D$ and thus the set $\{x_1,\ldots ,x_{k-1},a\}$ induces a complete subhypergraph of $H^D$. Therefore, every hyperedge of $H^D$ is covered with a complete subhypergraph of order $k$. It is also clear that $\alpha(H^D) \leq k-1$. (Note that a set of vertices $A$ of a hypergraph is an independent set if no hyperedge of the hypergraph is contained in $A$). Indeed, if there exists an independent set $A=\{x_1,\ldots ,x_k\}$ of cardinality $k$, then no hyperedge of $H^D$ is contained in $A$. This implies that for any $z=\{z_1,\ldots ,z_{k-1}\} \subseteq A$, it holds that $z \notin E(H^D)$. Hence, vertex $u=A \in V(\Johnson{n}{k})$ is not dominated by $D$, a contradiction.

Conversely, let $H$ be an $(k-1)$-uniform hypergraph of order $n$ with $\alpha(H)\leq k-1$, where each edge of $H$ is contained in a complete subhypergraph of $H$ of order $k$. Let $D'$ be a covering of hyperedges of $H$ with complete subhypergraphs of $H$ of order $k$. Let $u=\{u_1,\ldots ,u_k\} \in V(\Johnson{n}{k})\setminus D'$. Since $\alpha(H) \leq k-1$, $u$ is not an independent set of $H$. Hence, there exists an hyperedge $e$ of $H$ that is contained in $u$. Note that $|e|=k-1$. Since $D'$ is a covering of hyperedges of $H$, there exists $v \in D'$ such that $e \subseteq v$. Hence $u=v$ or $uv \in E(\Johnson{n}{k})$, and consequently $D'$ is a dominating set of $\Johnson{n}{k}$. 
\end{proof}

Notice that an analogous result is true for total domination if we consider overlapping coverings instead of just coverings, where an overlapping covering of hyperedges by complete hypergraph of $G$ is a covering $D$ such that for any $K\in D$ exists $K'\in D$ with $|K\cap K'|=k-1$.

We conclude the paper with the following natural open problem.

\begin{problem}
    Determine $\gamma(J(n,k))$ and $\tdom(J(n,k))$ for $k \geq 4$. In particular, find explicit formulas for these parameters in terms of $n$ and $k$, generalizing the results obtained for $k \in \{2,3\}$.
\end{problem}

\bibliographystyle{acm}
\bibliography{TDom-Johnson/biblio-tdom}

\end{document}